\documentclass{amsart}

\usepackage{amssymb}
\usepackage{mathrsfs}
\usepackage[utf8]{inputenc}
\usepackage{enumitem}
\usepackage{hyperref}
\usepackage{verbatim}
\usepackage{xcolor}
\usepackage{tikz}
\usetikzlibrary{calc,arrows,intersections}

\newcounter{propert}
\makeatletter
\newcommand\myitem[1][]{\item[#1]\refstepcounter{propert}\def\@currentlabel{#1}}
\makeatother

\newlist{thmlist}{enumerate}{1}
\setlist[thmlist]{label=(\arabic{thmlisti}), ref=\thethm./(\arabic{thmlisti}),noitemsep}

\usepackage[capitalize]{cleveref}
\Crefname{thm}{Theorem}{Theorems}
\Crefname{lem}{Lemma}{Lemmas}


\numberwithin{equation}{section}

\newtheorem{theorem}{Theorem}[section]
\newtheorem{thm}[theorem]{Theorem}
\newtheorem{lem}[theorem]{Lemma}
\newtheorem{prop}[theorem]{Proposition}
\newtheorem{cor}[theorem]{Corollary}

\newtheorem{conj}[theorem]{Conjecture}

\theoremstyle{definition}
\newtheorem{defin}[theorem]{Definition}

\theoremstyle{remark}
\newtheorem{rem}[theorem]{Remark}

\newcommand{\FF}[1]{\mathbb F_{#1}}
\newcommand{\ZZ}{\mathbb Z}
\newcommand{\NN}{\mathbb N}

\newcommand{\diam}{\operatorname{diam}}
\newcommand{\Tr}{\operatorname{Tr}}

\newcommand{\mt}{\mathcal T}
\newcommand{\mf}{\mathcal F}
\newcommand{\E}{E(\mt)}

\newcommand{\Cay}{\operatorname{Cay}}

\newcommand{\fl}{\langle}
\newcommand{\fr}{\rangle}
\newcommand{\lGamma}{\widetilde{\Gamma}}
\newcommand{\SL}{\mathrm{SL}}
\newcommand{\SU}{\mathrm{SU}}
\newcommand{\Sp}{\mathrm{Sp}}
\newcommand{\PSL}{\mathrm{PSL}}

\newcommand{\GL}{\mathrm{GL}}
\newcommand{\GU}{\mathrm{GU}}
\newcommand{\PGL}{\mathrm{PGL}}

\newcommand{\tikzshortencycle}{
  \begin{tikzpicture}[scale=0.8,>=stealth]
    \path [name path=kor]     (0,0) circle (4cm);
    \path [name path=r1line]  (0,0) -- (140:5cm);
    \path [name path=r2line]  (0,0) -- (170:5cm);
    \path [name path=ri-1line](0,0) -- (240:5cm);
    \path [name path=riline]  (0,0) -- (270:5cm);
    \path [name path=ri+1line](0,0) -- (300:5cm);
    \path [name path=rk-2line](0,0) -- ( 40:5cm);
    \path [name path=rk-1line](0,0) -- ( 70:5cm);
    \path [name path=rkline]  (0,0) -- (110:5cm);

      \path [name intersections={of=kor and r1line,   by=R1}];
      \path [name intersections={of=kor and r2line,   by=R2}];
      \path [name intersections={of=kor and ri-1line, by=Ri-1}];
      \path [name intersections={of=kor and riline,   by=Ri}];
      \path [name intersections={of=kor and ri+1line, by=Ri+1}];
      \path [name intersections={of=kor and rk-2line, by=Rk-2}];
      \path [name intersections={of=kor and rk-1line, by=Rk-1}];
      \path [name intersections={of=kor and rkline,   by=Rk}];

      \draw[very thick,dotted] (R2)   arc (170:240:4cm);
      \draw[very thick,dotted] (Ri+1) arc (300:400:4cm);

      \node[fill=white] (R1b)   at (R1)   {$r_1$};
      \node[fill=white] (R2b)   at (R2)   {$r_2$};
      \node[fill=white] (Ri-1b) at (Ri-1) {$r_{i-1}$};
      \node[fill=white] (Rib)   at (Ri)   {$r_i$};
      \node[fill=white] (Ri+1b) at (Ri+1) {$r_{i+1}$};
      \node[fill=white] (Rk-2b) at (Rk-2) {$r_{k-2}$};
      \node[fill=white] (Rk-1b) at (Rk-1) {$r_{k-1}$};
      \node[fill=white] (Rkb)   at (Rk)   {$r_k$};

      \path[->]
      (R1b)   edge [bend right]node[above,sloped]{$\scriptstyle{a_1}$}   (R2b)
      (Ri-1b) edge [bend right]node[below,sloped]{$\scriptstyle{a_{i-1}}$}(Rib)
      (Rib)   edge [bend right]node[below,sloped]{$\scriptstyle{a_i}$}  (Ri+1b)
      (Rk-2b) edge[bend right]node[above,sloped]{$\scriptstyle{a_{k-2}}$}(Rk-1b)
      (Rk-1b) edge [bend right]node[above,sloped]{$\scriptstyle{a_{k-1}}$}(Rkb)
      (Rkb)   edge [bend right]node[above,sloped]{$\scriptstyle{a_k}$}    (R1b)
      (Rk-2b) edge [bend left]node[below,sloped]{$\scriptstyle{b}$}      (Rkb)
      (Rk-1b) edge [bend left]node[below,sloped]{$\scriptstyle{c}$}     (R1b)
      (Rkb)   edge [bend right]node[above,sloped]{$\scriptstyle{d}$}     (Rk-1b)
      ;
  \end{tikzpicture}
}
\newcommand{\tikzthreecycle}{
  \begin{tikzpicture}[scale=0.4,>=stealth]
    \path [name path=kor]     (0,0) circle (4cm);
    \path [name path=r1line]  (0,0) -- (90:5cm);
    \path [name path=r2line]  (0,0) -- (210:5cm);
    \path [name path=r3line]  (0,0) -- (330:5cm);

    \path [name intersections={of=kor and r1line,   by=R1}];
    \path [name intersections={of=kor and r2line,   by=R2}];
    \path [name intersections={of=kor and r3line,   by=R3}];

    \node[fill=white] (R1b)   at (R1)   {$r_1$};
    \node[fill=white] (R2b)   at (R2)   {$r_2$};
    \node[fill=white] (R3b)   at (R3)   {$r_3$};

    \draw[->] (R1b) to [out=225,in=75]  node[left]{$\scriptstyle{a_1}$} (R2b);
    \draw[->] (R2b) to [out=-15,in=195] node[below]{$\scriptstyle{a_2}$}(R3b);
    \draw[->] (R3b) to [out=105,in=315] node[right]{$\scriptstyle{a_3}$}(R1b);
    \draw[->] (R1b) to [out=285,in=135] node[right]{$\scriptstyle{b}$}  (R3b);
    \draw[->] (R3b) to [out=165,in=15 ] node[above]{$\scriptstyle{d}$}  (R2b);
    \draw[->] (R2b) to [out=45 ,in=255] node[left]{$\scriptstyle{c}$}   (R1b);
  \end{tikzpicture}
}

\newcommand{\tikzhatszog}{
  \begin{tikzpicture}[scale=1.2,>=latex]
    \node (Rj)   at ( 0, 2) {$r_j$};
    \node (Rj-1) at (-2.8, 1) {$r_{j-1}$};
    \node (Xl)   at (-1.2, 1) {$x_l$};
    \node (Y1)   at ( 1.2, 1) {$y_1$};
    \node (Rj+1) at ( 2.8, 1) {$r_{j+1}$};
    \node (Ri+1) at (-2.8,-1) {$r_{i+1}$};
    \node (X1)   at (-1.2,-1) {$x_1$};
    \node (Ym)   at ( 1.2,-1) {$y_m$};
    \node (Ri-1) at ( 2.8,-1) {$r_{i-1}$};
    \node (Ri)   at ( 0,-2) {$r_i$};

    \draw[->, thick] (Rj-1) to [out=35 ,in=185] (Rj);
    \draw[->, thick] (Xl)   to [out=60 ,in=200] (Rj);
    \draw[->, thick] (Rj)   to [out=-20,in=120] (Y1);
    \draw[->, thick] (Rj)   to [out=-5 ,in=145] (Rj+1);

    \draw[->, thick, dashed] (Ri+1) to [out=110,in=250] (Rj-1);
    \draw[->, thick, dashed] (X1)   to [out=105,in=255] (Xl);
    \draw[->, thick, dashed] (Y1)   to [out=285,in= 75] (Ym);
    \draw[->, thick, dashed] (Rj+1) to [out=290,in= 70] (Ri-1);

    \draw[->, thick] (Ri)   to [out=175,in=-35] (Ri+1);
    \draw[->, thick] (Ri)   to [out=160,in=-60] (X1);
    \draw[->, thick] (Ym)   to [out=240,in=20 ] (Ri);
    \draw[->, thick] (Ri-1) to [out=215,in=5  ] (Ri);
  \end{tikzpicture}
}
\newcommand{\tikznewcycleunitary}{
  \begin{tikzpicture}[scale=0.7,>=stealth]
    \begin{scope}[shift={(-4,0)}]
      \coordinate (T) at (90:2.5);
      \coordinate (R1) at (50:2.5);
      \coordinate (R2) at (10:2.5);
      \coordinate (Rk-1) at (170:2.5);
      \coordinate (Rk) at (130:2.5);
      \coordinate (S1) at (90:4);

      \draw[very thick,dotted] (Rk-1) arc (170:370:2.5cm) (R2);

      \node[fill=white] (Tb) at (T)  {$t$};
      \node[fill=white] (R1b) at (R1) {$r_1$};
      \node[fill=white] (R2b) at (R2) {$r_2$};
      \node[fill=white] (Rk-1b) at (Rk-1) {$r_{k-1}$};
      \node[fill=white] (Rkb) at (Rk) {$r_k$};
      \node[fill=white] (S1b) at (S1) {$s_1$};

      \draw[<->] (Tb) -- (R1b);
      \draw[<->] (R1b) -- (R2b);
      \draw[<->] (Rk-1b) -- (Rkb);
      \draw[<->] (Rkb) -- (Tb);
      \draw[<->] (Tb) -- (S1b);
      \draw[<->] (S1b) -- (R1b);
      \draw[<->] (S1b) -- (Rkb);
    \end{scope}

    \begin{scope}[shift={(4,0)}]
      \coordinate (T) at (90:2.5);
      \coordinate (R1) at (50:2.5);
      \coordinate (R2) at (10:2.5);
      \coordinate (Rk-1) at (170:2.5);
      \coordinate (Rk) at (130:2.5);
      \coordinate (S1) at (70:4);
      \coordinate (S2) at (110:4);

      \draw[very thick,dotted] (Rk-1) arc (170:370:2.5cm) (R2);

      \node[fill=white] (Tb) at (T)  {$t$};
      \node[fill=white] (R1b) at (R1) {$r_1$};
      \node[fill=white] (R2b) at (R2) {$r_2$};
      \node[fill=white] (Rk-1b) at (Rk-1) {$r_{k-1}$};
      \node[fill=white] (Rkb) at (Rk) {$r_k$};
      \node[fill=white] (S1b) at (S1) {$s_1$};
      \node[fill=white] (S2b) at (S2) {$s_2$};

      \draw[<->] (Tb) -- (R1b);
      \draw[<->] (R1b) -- (R2b);
      \draw[<->] (Rk-1b) -- (Rkb);
      \draw[<->] (Rkb) -- (Tb);
      \draw[<->] (Tb) -- (S1b);
      \draw[<->] (Tb) -- (S2b);
      \draw[<->] (S2b) -- (S1b);
      \draw[<->] (S1b) -- (R1b);
      \draw[<->] (S2b) -- (Rkb);
    \end{scope}
  \end{tikzpicture}
}



\begin{document}
\title[On the diameter of Cayley graphs of classical groups]{On the
  diameter of Cayley graphs of classical groups with generating sets
  containing a transvection}

\author{Martino Garonzi}
\address{Universidade de Bras\'ilia, Departamento de matem\'atica. Campus
  Universit\'ario Darcy Ribeiro. Brasilia - DF. 70910--900, Brazil\newline
  ORCID: \url{https://orcid.org/0000-0003-0041-3131}}
\email{mgaronzi@gmail.com}

\author{Zolt\'an  Halasi}
\address{
  E\"otv\"os Lor\'and University, P\'azm\'any P\'eter s\'et\'any 1/c, H-1117,
  Budapest, Hungary \and Alfr\'ed R\'enyi Institute of Mathematics,
  Re\'altanoda utca 13-15, H-1053, Budapest, Hungary\newline
  ORCID: \url{https://orcid.org/0000-0002-1305-5380}
}
\email{halasi.zoltan@renyi.hu}

\author{G\'abor Somlai}
\address{
  E\"otv\"os Lor\'and University, P\'azm\'any P\'eter s\'et\'any 1/c, H-1117,
  Budapest, Hungary \and Alfr\'ed R\'enyi Institute of Mathematics,
  Re\'altanoda utca 13-15, H-1053, Budapest, Hungary\newline
  ORCID: \url{https://orcid.org/0000-0001-5761-7579}
  }
\email{gabor.somlai@ttk.elte.hu}

\thanks{The first author acknowledges the support of Funda\c{c}\~{a}o
  de Apoio \`a Pesquisa do Distrito Federal (FAPDF) - demanda
  espont\^{a}nea 03/2016, and of Conselho Nacional de Desenvolvimento
  Cient\'ifico e Tecnol\'ogico (CNPq) - Grant numbers 302134/2018-2,
  422202/2018-5.\\
  \indent The work of the second and third authors on the project
  leading to this application has received funding from the European
  Research Council (ERC) under the European Union's Horizon 2020
  research and innovation programme (grant agreement No. 741420).
  Their work was supported by the National
  Research, Development and Innovation Office (NKFIH) Grant
  No.~K138596.\\
  \indent The third author was also partially supported by the
  J\'anos Bolyai Research Fellowship and by the New National
  Excellence Program under the grant number UNKP-20-5-ELTE-231.}

\date{\today}

\keywords{}
\subjclass[2010]{}
\begin{abstract}
  A well-known conjecture of Babai states that if $G$ is any finite
  simple group and $X$ is a generating set for $G$, then the diameter
  of the Cayley graph $\Cay(G,X)$ is bounded by $\log|G|^c$ for some
  universal constant $c$. In this paper, we prove such a bound for
  $\Cay(G,X)$ for $G=\PSL(n,q),\ PSp(n,q)$ or $PSU(n,q)$ where $q$ is
  odd, under the assumptions that $X$ contains a transvection and
  $q\neq 9$ or $81$.
\end{abstract}
\maketitle
\markleft{M.~GARONZI ET AL.}
\section{Introduction}

Given a finite group $G$ and a set $X$ of generators of $G$, the
associated (undirected)
Cayley graph $\Cay(G,X)$ is defined to have vertex set $G$ and
edge set $\{ \{g,gx\} : g \in G, \ x \in X \}$. The diameter
of $\Cay(G,X)$ equals the maximum over $g \in G$ of
the length of a shortest expression of $g$ as a product of generators
in $X$ and their inverses.
The maximum of $\diam(\Cay(G,X))$, as
$X$ runs over all possible generating sets of $G$, is denoted by
$\diam(G)$.

In 1992 Babai \cite{BS} proposed the following conjecture.
\begin{conj}
  If $G$ is a non-Abelian finite simple group, then
  $\diam(G)\leq (\log|G|)^c$ for some absolute constant $c$.
\end{conj}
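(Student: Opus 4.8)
This is Babai's conjecture in full generality, which remains open; what follows is the natural line of attack and the precise point at which it currently stalls. By the classification of finite simple groups it would suffice to treat three regimes separately and extract an absolute constant $c$ at the end. \emph{Regime I: groups of Lie type of bounded rank $r$ over $\FF q$.} Here the plan is to invoke a product theorem: for every generating set $X$ of such a $G$, either $X^3=G$ or $|X^3|\ge|X|^{1+\varepsilon}$ with $\varepsilon=\varepsilon(r)>0$; iterated $O(\log|G|)$ times this yields $\diam(G)\le(\log|G|)^{c(r)}$. Such theorems are available (Helfgott for $\SL(2,p)$ and $\SL(3,p)$, then Pyber--Szab\'o and Breuillard--Green--Tao in general bounded rank); the remaining work is to make $c(r)$ independent of $r$, which the present proofs do not provide. \emph{Regime II: alternating and symmetric groups.} Here one would run the Babai--Seress machinery: from an arbitrary generating set of $A_n$, produce in $(\log n)^{O(1)}$ steps an element of support $O(1)$ by repeated commutators and conjugations, then use transitivity of the action to spread it around and reconstruct every element of $A_n$ in $(\log n!)^{O(1)}$ further steps, giving $\diam(A_n)\le(\log|A_n|)^c$ with absolute $c$.

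\emph{Regime III: groups of Lie type of unbounded rank}, such as $\PSL(n,q)$, $\PSp(n,q)$, $\PSU(n,q)$ with $n\to\infty$ --- the hard case, and exactly the one the present paper chips at. The plan is to reduce to the transvection situation. From an arbitrary generating set $X$ of, say, $\PSL(n,q)$ one would first manufacture, in $(\log|G|)^{O(1)}$ steps, an element that is ``geometrically small'': ideally a transvection itself, or at least an element whose fixed subspace has codimension bounded by an absolute constant, or a short word landing in a proper classical subgroup acting on a bounded-dimensional subspace. One would then feed this element into the transvection-case theorem announced in the abstract to finish. The manufacturing step would combine additive combinatorics in $\FF q$, used to enlarge and control the set of matrix entries appearing in short words, with the escape-from-subvarieties principle (Eskin--Mozes--Oh, Larsen--Pink): a generating set cannot concentrate on a proper subvariety, so a bounded-length word already has large fixed-space codimension, and iteration would be designed to drive this codimension down to an absolute constant, after which the transvection argument applies. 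Small $q$ (where additive combinatorics in $\FF q$ is weak) would need a separate treatment, e.g.\ via field-size reduction or via known unbounded-rank results for generating sets containing distinguished elements.

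The principal obstacle --- and the reason the conjecture is still open --- is precisely Regime III when $X$ contains neither an element of small support nor a transvection: there is at present no known way to fabricate such an element in polylogarithmically many steps, because every available growth statement loses a factor depending on the rank $n$ at each application, while the geometric escape arguments control only subvarieties of bounded codimension and say nothing about the codimension-one configuration one starts from. A secondary, milder obstacle is uniformity: merging Regimes I and II into a single constant $c$ still requires removing the rank-dependence from the bounded-rank product theorems. So the realistic reading of this plan is that Regime II is essentially complete, Regime I is complete modulo a uniformity issue, the transvection sub-case of Regime III is the contribution of this paper, and the general unbounded-rank case is the open core of Babai's conjecture.
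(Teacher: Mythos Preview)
The statement is Babai's \emph{conjecture}; the paper records it as an open problem and does not prove it. Its contribution is the special case in which $G$ is $\PSL$, $\PSp$ or $\PSU$ and the generating set already contains a transvection. You correctly recognise this and offer a strategic survey rather than a proof, so there is no ``paper's own proof'' to compare your proposal against.

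That said, your survey contains a substantive factual error. You conclude that ``Regime II is essentially complete'', i.e.\ that $\diam(A_n)\le(\log|A_n|)^{O(1)}$ is known. It is not. The best unconditional bound is the Helfgott--Seress theorem, quoted in the introduction of this very paper: $\diam(A_n)\le\exp\bigl(O((\log n)^4\log\log n)\bigr)$, which is only quasipolynomial in $\log|A_n|$. Babai's conjecture for the alternating groups is open. The step you sketch --- producing an element of bounded support from an arbitrary generating set in $(\log n)^{O(1)}$ words --- is exactly the step nobody can carry out; the Babai--Seress machinery you invoke gives $\exp(\sqrt{n\log n}(1+o(1)))$, not a polylogarithmic bound.

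Your framing of Regime I is also muddled. For any fixed rank $r$ the product theorems already give an absolute constant $c(r)$, and since exceptional groups have rank at most $8$, they are genuinely finished. ``Making $c(r)$ independent of $r$'' is not a residual issue inside Regime I: it is literally the same problem as your Regime III, so the two regimes as you have drawn them are not disjoint. The honest trichotomy is: bounded-rank Lie type (done), alternating (open, quasipolynomial known), unbounded-rank classical (open, with partial results such as this paper's under extra hypotheses on $X$).
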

In the same paper Babai and Seress established the following bound:
\[  diam(A_n) < exp(\sqrt{n ~ln (n)}(1 + o(1))).\]
Further, a similar bound holds for arbitrary permutation groups of degree $n$.
The first infinite series of finite simple groups for which the
conjecture was proved by Helfgott \cite{Helfgott} is $PSL(2,p)$, where
$p$ is a prime.

For simple groups of Lie type of bounded rank, Babai's conjecture is
completely solved. Note that this covers the case of exceptional
simple groups of Lie type.  It is an easy consequence of the following
``product theorem'':
\begin{thm}[Pyber--Szab\'o \cite{Pyber-Szabo},
  Breuillard--Green--Tao \cite{Breuillard-etal}]
  \label{thm:product}
  For any positive integer $r$, there is an
  $\varepsilon = \varepsilon(r)>0$ such that if $G$ is any finite
  simple group of Lie type of rank $r$ and $X$ is a generating set of
  $G$, then either $|X^3|>|X|^{1+\varepsilon}$ or $X^3 = G$.
\end{thm}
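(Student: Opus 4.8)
The plan is to establish the product theorem by the \emph{non-concentration and escape} method, i.e.\ the strategy of Pyber--Szab\'o \cite{Pyber-Szabo} (Breuillard--Green--Tao \cite{Breuillard-etal} give an alternative in which the combinatorial core is replaced by Hrushovski's model-theoretic stabiliser theorem). All implied constants may depend on $r$, and this dependence is exactly why the statement is restricted to bounded rank. The first, routine, step is a reduction: by Tao's non-commutative Pl\"unnecke--Ruzsa inequalities, a symmetric generating set $X\ni 1$ with $|X^3|\le|X|^{1+\varepsilon}$ also satisfies $|X^m|\le|X|^{1+O_m(\varepsilon)}$ for every fixed $m$, so it suffices to find $c=c(r)>0$ and $m=m(r)$ such that \emph{every} generating set $X$ with $X^3\ne G$ obeys $|X^m|\ge|X|^{1+c}$; one then takes $\varepsilon$ small compared with $c/O_m(1)$. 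We may also assume $|X|$ exceeds any prescribed threshold, as the Lie types with $q$ below a bound $q_0(r)$ have $|G|$ bounded and the claim is vacuous there.

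\emph{Algebraic set-up and Larsen--Pink.} Realise $G$, up to bounded index, as the $\FF{q}$-points of a connected simple algebraic group $\mathbf G\le\GL_N$ over $\overline{\FF{p}}$, with $N=N(r)$ and $d:=\dim\mathbf G=O(r^2)$, so $|G|\asymp q^{d}$. The analytic engine is twofold. The \emph{dimensional estimate} of Larsen and Pink: for any subvariety $V\subseteq\mathbf G$ of bounded degree, $|V\cap G|\ll q^{\dim V}$; in particular $|H\cap G|\ll q^{d-1}\ll|G|^{1-1/d}$ for every proper closed subgroup $H$, so $G$ does not concentrate on proper subvarieties. The \emph{structure theorem} of Larsen and Pink for finite subgroups of $\GL_N(\overline{\FF{p}})$: such a subgroup contains a subgroup of bounded index which, after quotienting by a normal $p$-subgroup, is a product of groups of Lie type in characteristic $p$ and an abelian group; hence a ``large'' finite subgroup is, up to bounded-index and bounded-order adjustments, a group of Lie type of rank $\le r$ (or essentially solvable).

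\emph{Escape and the dichotomy.} The escape lemma of Eskin--Mozes--Oh says that, since $\langle X\rangle=G$ is Zariski dense in $\mathbf G$, some word of bounded length in $X$ avoids any prescribed proper subvariety of bounded degree. Fix $X$ with $X^3\ne G$. \emph{Non-degenerate case:} $|X^2\cap H|\le|X|^{1-\eta}$ for every proper closed subgroup $H$. By escape a bounded power of $X$ contains a regular semisimple element $g$, whose centraliser is a maximal torus of dimension $r$, and one pivots around $g$ in the spirit of Helfgott's argument for $\PSL(2,p)$: the small centraliser together with non-concentration forces the conjugation action of $X$ to spread $g$ across its class, and a counting argument with the dimensional estimate then either gains a power of $|X|$ at each multiplication or lands a bounded power of $X$ onto a positive-density subset of the class $g^{\mathbf G}$, whose density $\asymp q^{-r}\ge|G|^{-O_r(1)}$ in $G$ gives it bounded covering number (Liebeck--Shalev; character-theoretic mixing); either way $|X^m|\ge|X|^{1+c}$. \emph{Degenerate case:} $|X^2\cap H|>|X|^{1-\eta}$ for some proper closed $H$; take $H$ of minimal dimension with this property. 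Then $A:=X^4\cap H$ is an approximate subgroup whose Zariski closure $\mathbf H$ is a proper connected algebraic subgroup of $\mathbf G$, so $\mathbf H$ is solvable (and the commutative theory of approximate groups applies to $A$) or the semisimple part of $\mathbf H$ is a product of simple groups each of rank $<r$ (and the inductive hypothesis applies to the image of $A$ there, the base case being the rank-one groups, which go back to Helfgott \cite{Helfgott} for $\PSL(2,p)$ and to subsequent work for the rest). If $A$ grows, so does $X$. If $A$ instead ``fills'' its subgroup, then, $G$ being simple, conjugates of $\mathbf H$ generate $\mathbf G$, so by escape there is $x\in X^{O(1)}$ with $x\mathbf Hx^{-1}\ne\mathbf H$, and a pivoting argument with this $x$ --- combining $A$ with its $x$-conjugate and invoking non-concentration on the strictly smaller-dimensional intersections $\mathbf H\cap x\mathbf Hx^{-1}$ (here minimality of $\dim H$ is used) --- again yields $|X^m|\ge|X|^{1+c}$. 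In every case the reduction of the first paragraph is satisfied.

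The main obstacle is the degenerate alternative, specifically the bootstrapping of an approximate subgroup concentrated inside a proper subgroup up to genuine growth of $X$: this is where bounded rank is indispensable (the codimension savings in the dimensional estimate are of order $1/d$, and the exponents $\eta$ must survive the recursion on the rank), and it is exactly the portion of \cite{Pyber-Szabo} occupied by the analysis of non-concentration and of approximate subgroups lying inside parabolic, torus-normaliser and subfield subgroups. The non-degenerate alternative, though also substantial, is in spirit Helfgott's sum--product argument for $\PSL(2,p)$; the reductions in the first two paragraphs are routine once the cited dimensional and covering-number estimates are available.
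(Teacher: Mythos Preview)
The paper does not prove this theorem at all: it is quoted as a known result, with attribution to Pyber--Szab\'o \cite{Pyber-Szabo} and Breuillard--Green--Tao \cite{Breuillard-etal}, and is used only as a black box (to derive \cref{cor:Babai-strong} and, through it, the bounded-rank bounds applied later in the paper). So there is no ``paper's own proof'' to compare your proposal against.

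What you have written is a recognisable high-level outline of the Pyber--Szab\'o strategy, and as such it is appropriate as an \emph{explanation of why the cited theorem is true}, but it would be misplaced as a proof inside this paper. If your intention was to supply a proof for the paper, the correct response is simply that no proof is required here; the statement is imported from the references.

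As to the sketch itself: the overall architecture (Pl\"unnecke--Ruzsa reduction, Larsen--Pink dimensional estimates and structure theorem, escape from subvarieties, the dichotomy between non-concentration on proper closed subgroups versus concentration, pivoting on regular semisimple elements, and induction on rank in the degenerate case) is faithful to \cite{Pyber-Szabo}. A couple of the shortcuts are a bit glib for a full proof --- for instance, the passage ``lands a bounded power of $X$ onto a positive-density subset of the class $g^{\mathbf G}$ \ldots\ gives it bounded covering number'' conflates two rather different mechanisms (Nikolov--Pyber/Gowers-type density arguments versus Liebeck--Shalev character bounds), and the degenerate-case recursion needs more care than ``the inductive hypothesis applies to the image of $A$ there'' since one must control the kernel and the $p$-unipotent radical separately --- but these are the expected compressions in a one-page summary and do not indicate a wrong approach.
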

As a consequence of this deep result one gets
a strong form of Babai's conjecture.
\begin{cor}\label{cor:Babai-strong}
  If $G$ is a finite simple group of Lie type of bounded
  rank $r$, then for any generating set $X$ of $G$
  \[
    \diam(\Cay(G,X))=O\Big(\frac{\log|G|}{\log|X|}\Big)^c,
  \]
  where $c$ depends only on $r$.
\end{cor}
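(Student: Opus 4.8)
The corollary is a routine deduction from Theorem~\ref{thm:product} by the standard ``tripling'' argument, and I do not expect any genuine obstacle: all of the depth sits in the product theorem, which I would use as a black box. First I would pass to the symmetric set $Y := X \cup X^{-1} \cup \{1\}$. Then $Y$ generates $G$, is symmetric, contains the identity, and satisfies $|Y| \ge |X| \ge 2$ (a non-Abelian simple group is not cyclic, so no singleton generates it; this is what keeps $\log|X|$ bounded away from $0$). Since $\Cay(G,X)$ is connected and vertex-transitive, its diameter equals the eccentricity of the identity vertex, and the ball of radius $n$ about the identity is exactly $Y^n$; hence $\diam(\Cay(G,X)) = \min\{\, n : Y^n = G \,\}$.

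Next I would iterate Theorem~\ref{thm:product} along the chain $Y \subseteq Y^3 \subseteq Y^9 \subseteq \cdots$. Let $r$ be the (bounded) rank, let $\varepsilon = \varepsilon(r)>0$ be the constant from Theorem~\ref{thm:product}, and set $m := \min\{\, i \ge 0 : Y^{3^i} = G \,\}$; if $m = 0$ there is nothing to prove, so assume $m \ge 1$. For each $j$ with $1 \le j \le m-1$ the set $Y^{3^{j-1}}$ generates $G$ while its cube $Y^{3^j}$ is still a proper subset of $G$, so the product theorem forces $|Y^{3^j}| > |Y^{3^{j-1}}|^{1+\varepsilon}$. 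Iterating from $j = 1$ gives $|Y^{3^{m-1}}| \ge |Y|^{(1+\varepsilon)^{m-1}}$, and since $Y^{3^{m-1}} \subseteq G$ this yields $(1+\varepsilon)^{m-1}\log|Y| \le \log|G|$. Using $|Y| \ge |X| \ge 2$, I get
\[
  (1+\varepsilon)^{m-1} \;\le\; \frac{\log|G|}{\log|X|} \;=:\; t .
\]

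Finally, taking logarithms gives $m \le 1 + \frac{\log t}{\log(1+\varepsilon)}$, so
\[
  \diam(\Cay(G,X)) \;=\; \min\{\, n : Y^n = G \,\} \;\le\; 3^{m} \;\le\; 3\, t^{\,\log 3 / \log(1+\varepsilon)} .
\]
Putting $c = c(r) := \log 3 / \log\bigl(1+\varepsilon(r)\bigr)$, which depends only on $r$, this is precisely $\diam(\Cay(G,X)) = O\Big(\frac{\log|G|}{\log|X|}\Big)^{c}$, as claimed. The only points that need any attention are the passage to the symmetric set $Y$ and the elementary bound $|X| \ge 2$; everything else is Theorem~\ref{thm:product} applied $O(\log t)$ many times.
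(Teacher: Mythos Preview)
Your proof is correct and follows essentially the same tripling argument as the paper, iterating the product theorem along the chain $Y \subseteq Y^3 \subseteq Y^9 \subseteq \cdots$ and arriving at the same exponent $c = \log 3 / \log(1+\varepsilon)$. If anything you are slightly more careful than the paper: you explicitly pass to the symmetric set $Y = X \cup X^{-1} \cup \{1\}$ before invoking Theorem~\ref{thm:product}, and you note why $|X|\ge 2$, whereas the paper applies the product theorem directly to $X$ without comment.
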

\begin{proof}
  Let $\varepsilon=\varepsilon(r)$ as in \cref{thm:product}
  and let $k$ be the smallest integer satisfying
  $|X|^{(1+\varepsilon)^k}>|G|$.
  Assuming that $X^{(3^k)}\neq G$, by a repeated use of
  \cref{thm:product} we get that
  \[
    |X^{(3^k)}|>|X|^{(1+\varepsilon)^k}>|G|,
  \]
  a contradiction. So, $\diam(\Cay(G,X))\leq 3^k$. On the other hand,
  by choosing $c=\log_{1+\varepsilon} 3$,
  \[
    |X|^{(1+\varepsilon)^k}>|G|\iff
    (1+\varepsilon)^k>\frac{\log|G|}{\log|X|}\iff
    3^k>\Big(\frac{\log|G|}{\log|X|}\Big)^c,
  \]
  that is, $k$ is the smallest integer satisfying
  $3^k>\Big(\frac{\log|G|}{\log|X|}\Big)^c$. Hence
  \[
    \diam(\Cay(G,X))\leq 3^k\leq 3\Big(\frac{\log|G|}{\log|X|}\Big)^c.
  \]
	This concludes the proof.
\end{proof}

In \cite{HZ}, the second author proved a suitable bound for
$\diam(\Cay(G,X))$ when $G=\SL(n,p)$, $p$ is a prime and $X$ is a
generating set for $G$ containing a transvection.  The general case of the conjecture is open. The results obtained in \cite{by},\cite{hmpq} for classical groups of unbounded rank are exponential in $q$. More
precisely, it was proved in \cite{hmpq} that
\[ diam(G) \le q^{O(n \log(n))^2} .\]
On the other hand, for large enough $q$, this was strenghtened by Bajpai, Dona and Helfgott \cite{BDH} who proved that if $G$ is a classical Chevalley group of rank $n$ defined over the field $\mathbb{F}_q$, then $$ diam(G) \le (\log(|G|))^{1947n^4 \log(2n)}.$$

The best available bound for permutation groups was proved by Helfgott
and Seress \cite{hs} that
\[diam (Cay(A_n, S)) = exp~ O((\log n)^4 \log \log n)\] if $S$ is a
generating set of $A_n$.
\begin{rem}
  In many cases, a simple group $G$ is given as the image of a
  quasisimple group $\widetilde{G}$ under a surjective homomorphism
  $\tau: \widetilde{G}\to G$.  Now, if $X$ is any generating set for
  $G$, then $\widetilde{X}=\tau^{-1}(X)$ is a generating set for
  $\widetilde{G}$ satisfying $\diam(\Cay(\widetilde{G},\widetilde{X}))
  = \diam(\Cay(G,X))$.  Since we also have $|\widetilde{G}|\le
  |G|^{O(1)}$, a positive answer to Babai's conjecture for
  $\widetilde{G}$ implies the positive answer to Babai's conjecture
  for $G$.
\end{rem}
The main result of this paper is the following.
\begin{thm}\label{thm:main}
  Let $V$ be an $n$-dimensional vector space over the finite field
  $\FF q$ where $q$ is odd and $G$ is one of $\SL(V),\ \Sp(V)$ or
  $\SU(V)$.  Let $X$ be a generating set for $G$, which contains a
  transvection. Then $\diam(\Cay(G,X))=\big(\log(q) n \big)^c$
  for some constant $c$ provided that
  \begin{itemize}
  \item $q\neq 9$ if $G=\Sp(V)$;
  \item $q\neq 81$ if $G=\SU(V)$;
  \item $q\neq 9$ and $q\neq 81$ if $G=\SL(V)$.
  \end{itemize}
  So Babai's bound holds for these cases.
\end{thm}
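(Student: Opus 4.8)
The plan is to reduce, by bounded generation of classical groups by root subgroups, to a single statement: for every root $r$ (relative to a basis of $V$ adapted to the relevant form) and every parameter $\lambda$, the root element $x_r(\lambda)$ lies at distance $(n\log q)^{O(1)}$ from the identity in $\Cay(G,X)$. Indeed, by the Bruhat decomposition (equivalently, Gaussian elimination) every $g\in G$ is a product of $O(n^{2})$ root elements with prescribed parameters, so once each such factor is realized by a word of length $(n\log q)^{O(1)}$ we obtain $\diam(\Cay(G,X))\le O(n^{2})\cdot(n\log q)^{O(1)}=(n\log q)^{c}$.

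First I would carry out an ``escape'' phase, starting from the single transvection $t\in X$ and using only that $\langle X\rangle=G$. Conjugating $t$ by short $X$-words and forming commutators, the goal is to reach, in $(n\log q)^{O(1)}$ steps, a family of transvections which (i) has centres spanning $V$ over $\FF q$, and (ii) after a suitable choice of adapted basis $e_1,\dots,e_n$ (a hyperbolic basis if $G=\Sp(V)$ or $\SU(V)$) contains, for each consecutive pair of basis vectors, a pair of opposite transvections $e_{i,i+1}(c_i)$, $e_{i+1,i}(c_i')$ with $c_i,c_i'\neq0$ and $c_ic_i'$ generating $\FF q$ (resp.\ $\FF{q^2}$) over the prime field. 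The heart of this phase is a chain of ``escape from a subvariety / a subfield'' arguments: the configurations of conjugates of $t$ whose centres fail to span, or whose parameters are confined to a proper subfield, lie in proper subvarieties of bounded complexity, and since $\langle X\rangle=G$ acts irreducibly on $V$ (and primitively enough on the relevant configuration spaces) one leaves each of them after polynomially many steps; the combinatorial bookkeeping — the analysis of ``cycles'' of transvections and their shortenings, as in the figures — keeps all words produced of length $(n\log q)^{O(1)}$.

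Given (ii), for each consecutive pair the two opposite transvections generate a subgroup isomorphic to $\SL(2,q)$ acting on $\langle e_i,e_{i+1}\rangle$ and fixing a complement (a non-degenerate $2$-space and its perpendicular in the $\Sp$, $\SU$ cases). This block has Lie rank $O(1)$, so by \cref{cor:Babai-strong}, applied to its simple quotient (the bounded centre being immaterial), every element of it is a word of length $(\log q)^{O(1)}$ in those two transvections, hence a word of length $L_1:=(n\log q)^{O(1)}$ in $X$; in particular the \emph{full} root subgroups $e_{i,i+1}(\FF q)$, $e_{i+1,i}(\FF q)$ and the rank-one torus are reached in $L_1$ steps. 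I would then propagate fullness of root subgroups from consecutive pairs to all roots by the Chevalley commutator relations (for $\SL$: $e_{ik}(\lambda)=[e_{ij}(1),e_{jk}(\lambda)]$; the analogous relations among long and short roots for $\Sp$, $\SU$, with the $\FF{q^2}$-parametrisation in the unitary case), but using a \emph{balanced} recursion: writing $e_{i,i+m}(\lambda)=[e_{i,i+\lceil m/2\rceil}(1),\,e_{i+\lceil m/2\rceil,i+m}(\lambda)]$ and letting $f(m)$ be the word length needed for any such element uniformly in $i$, one gets $f(m)\le 4f(\lceil m/2\rceil)+2L_1$, hence $f(m)\le m^{2}L_1=(n\log q)^{O(1)}$. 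A naive one-dimension-at-a-time induction would instead cost $2^{n}$ and must be avoided. With the reduction of the first paragraph, this finishes the proof.

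The main difficulty is the escape phase together with the uniformity in $i$ it must supply: one has to prove that the short conjugates of a \emph{single} transvection already ``see'' every coordinate direction, are trapped in no proper subspace, and yield subfield-generating rank-one blocks along a connected set of consecutive pairs. It is precisely the degeneration of these genericity statements over the small characteristic-$3$ fields $\FF9$ and $\FF{81}$ — where the subfield $\FF3$ (resp.\ $\FF3$ and $\FF9$) and the attendant exceptional small groups obstruct the base cases of the escape and cycle-shortening lemmas — that forces the exclusions $q\neq9$ and $q\neq81$ in the statement.
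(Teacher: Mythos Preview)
Your overall architecture --- reduce to transvections, use the bounded-rank product theorem on rank-one blocks to obtain full root subgroups, propagate by a balanced commutator recursion, and finish by Gaussian elimination --- matches the paper's, and the propagation and Gaussian-elimination steps are essentially what the paper does in \cref{sec:addition} and its closing paragraph.

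The gap is the ``escape phase,'' which you correctly flag as the main difficulty but do not actually carry out. You frame it as escape from subvarieties ``of bounded complexity,'' but this is not justified: the condition that $n$ centres span $V$ is a degree-$n$ determinantal condition, and confinement to a proper subfield is not a bounded-degree algebraic condition over $\FF q$ at all (the equation $x^{q_0}=x$ cutting out $\FF_{q_0}$ has degree $q_0$), so the standard escape machinery gives no rank- or field-independent bound. The paper does not use this framework. Instead it works directly with the \emph{transvection graph} $\Gamma(X)$: strong connectivity (equivalent to irreducibility of $\langle X\rangle$) is exploited to shrink its diameter to $2$ in $O(n)$ steps (\cref{lem:smalldiam}); the weights of its cycles are shown to generate $\FF q$, and a delicate reduction (\cref{lem:short-cycles-gen-Fq,lem:weights-twoway-edges,lem:SympUnit_M=Fq0}) forces already the $2$-cycle weights to generate $\FF_{q_0}$; Dickson's theorem (\cref{thm:Dickson_for_subsets}) then converts a single $2$-cycle of weight $\delta$ into a genuine $\SL(2,\FF_p(\delta))$, and this is exactly where $q_0=9$ must be excluded, since Dickson's classification has a real exception at $M=\FF_9$. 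For $G=\SL(V)$ with $q$ a square there is a further lengthy ``gluing triangles'' argument (\cref{sec:gluing}) needed to manufacture a $2$-cycle with weight outside $\FF_{\sqrt q}$. None of this is bookkeeping; it is the substance of the proof, and your proposal supplies only a name for it.
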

A common modification of conjectures of these types is the case of
random generators, which was also highlighted by Lubotzky
\cite{Lub}. A special case of one of the main results of a beautiful
paper by Eberhard and Jezernik (see \cite[Theorem 1.1]{EJ}) says that
if $G$ is a classical group over $\FF q$ of rank $n$ where $q$ is
bounded and $n$ is large enough then by choosing $X=\{x,y,z\}$
randomly from $G$, there is a word $w\in F_3$ of length $n^{O(1)}$
with high probability (with probability $1-e^{-cn}$ for some absolute
constant $c$) such that $w(x,y,z)$ is an element of $G$ of minimal
degree. Since the elements of minimal degree of $\SL(V), \SU(V)$ and
$\Sp(V)$ are exactly the transvections of these groups, a combined use
of this result with the main result of this paper implies.
\begin{cor}
  Let $G$ be one of $\SL(n,q),\ \Sp(n,q)$ or $\SU(n,q)$ where $q$ satisfies the
  assumptions of \cref{thm:main}. Let us also assume that $q$ is bounded.
  Let $X=\{x,y,z\}$, where $x,y,z$ are chosen randomly from $G$ (i.e.
  independently and with uniform distribution). Then
  \[
    P\big(\diam(\Cay(G,X))\leq (\log|G|)^{C}\big)\geq 1-e^{-cn}
  \]
  for some constants $c,C$.
  That is, Babai's conjecture holds for three random generators
  with high probablity when $n$ is large enough.
\end{cor}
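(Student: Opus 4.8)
The plan is to use \cref{thm:main} and the quoted result of Eberhard and Jezernik essentially as black boxes; the only work is a union bound over two bad events and a conversion of word lengths. Let $E_1$ be the event that $\langle x,y,z\rangle=G$, and let $E_2$ be the event that there is a word $w\in F_3$ of length at most $n^a$ — with $a$ the absolute constant hidden in the exponent $n^{O(1)}$ of \cite[Theorem 1.1]{EJ} — such that $w(x,y,z)$ is an element of $G$ of minimal degree. Since $q$ is bounded, $P(E_2)\ge 1-e^{-c_2 n}$ is exactly the content of \cite[Theorem 1.1]{EJ}, while $P(E_1)\ge 1-e^{-c_1 n}$ by the standard estimates for random generation of finite classical groups of large rank (for fixed $q$ the dominant obstructions — a common invariant subspace, a common block decomposition, or a common field-extension or subfield structure — each occur with probability exponentially small in $n$). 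A union bound gives $P(E_1\cap E_2)\ge 1-e^{-cn}$ for a suitable absolute constant $c$, all constants being uniform over the finitely many admissible values of $q$.

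From here on I would argue on the event $E_1\cap E_2$, and set $t:=w(x,y,z)$. Because the elements of $\SL(V)$, $\Sp(V)$ and $\SU(V)$ of minimal degree are precisely the transvections, $t$ is a transvection, and $t\in\langle x,y,z\rangle=G$, so $X':=X\cup\{t\}$ is a generating set of $G$ containing a transvection. Applying \cref{thm:main} to $X'$ yields $\diam(\Cay(G,X'))\le(\log(q)\,n)^{c_0}$ for some constant $c_0$; since $q$ is bounded and $|G|\ge q^{\,n}$, both $n$ and $\log(q)\,n$ are $O(\log|G|)$, so $\diam(\Cay(G,X'))\le(\log|G|)^{C_1}$ for a suitable $C_1$. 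It remains to pass from $X'$ back to $X$: every $g\in G$ is a product of at most $D':=\diam(\Cay(G,X'))$ elements of $X'\cup(X')^{-1}=X\cup X^{-1}\cup\{t,t^{-1}\}$, and substituting the word $w(x,y,z)^{\pm1}$ of length at most $n^a$ for each occurrence of $t^{\pm1}$ writes $g$ as a product of at most $n^a D'\le n^a(\log|G|)^{C_1}\le(\log|G|)^{C}$ elements of $X\cup X^{-1}$, using again that $n=O(\log|G|)$. Hence $\diam(\Cay(G,X))\le(\log|G|)^{C}$ on $E_1\cap E_2$, which is the assertion of the corollary.

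I do not expect a genuine obstacle: all the depth sits in \cref{thm:main} and in \cite{EJ}, and the rest is bookkeeping. The two points needing a little care are that inserting the $n^{O(1)}$ word of \cite{EJ} into an expression over $X$ does not spoil the polynomial bound — immediate, since both factors have size $(\log|G|)^{O(1)}$ when $q$ is bounded — and that the generation event $E_1$ fails only with exponentially small probability rather than merely with probability $o(1)$; for bounded $q$ this is classical, and one could alternatively sidestep it using classical results on irreducible linear groups that contain a transvection, the reducible and imprimitive cases being exponentially rare.
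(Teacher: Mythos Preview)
Your argument is correct and matches the paper's approach exactly: the paper states the corollary without proof, indicating only that it follows from ``a combined use of'' \cite[Theorem 1.1]{EJ} and \cref{thm:main}, and you have filled in precisely those details (producing a transvection as a short word in $X$, applying \cref{thm:main} to $X\cup\{t\}$, and absorbing the $n^{O(1)}$ word-length into the final bound). The only point worth flagging is the exponential bound $P(E_1)\ge 1-e^{-c_1 n}$ for random generation, which you correctly identify as classical for bounded $q$; this is indeed the one ingredient not spelled out in the paper's one-line sketch.
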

A part of our proof can be used to show the following,
which holds even for infinite fields.
\begin{thm}\label{thm:X-contains-K-tr-sgrp}
  Let us assume that $V$ is a $n$-dimensional non-degenerate
  symplectic or hermitian vector space over the field 
  $K$, and $G=Sp(V)$ or $G=SU(V)$. Let us assume that $X$ is a generating
  set for $G$, which contains a transvection subgroup over $K_0\leq K$,
  where $K_0=K$ in the symplectic case and $|K:K_0|=2$ in the unitary case.
  Then $\diam(\Cay(G,X))\leq n^{O(1)}$ provided that $|K_0|>2$. 
\end{thm}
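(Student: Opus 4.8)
Write $T\subseteq X$ for the given transvection subgroup, say with isotropic centre $\fl v\fr$, so that $T=\{x\mapsto x+\lambda\fl x,v\fr v:\lambda\in K_0\}$. By the remark on quasisimple covers preceding \cref{thm:main} we may take $G=\Sp(V)$ or $\SU(V)$ itself, and for bounded $n$ the construction below terminates after $O(1)$ steps, so assume $n$ is large. The plan is to produce a set $\Sigma$ of isotropic lines with $|\Sigma|=n^{O(1)}$ such that (a) for every $\fl w\fr\in\Sigma$ the transvection subgroup $T_w$ is conjugate to $T$ by a word of length $n^{O(1)}$ in $X\cup X^{-1}$, and (b) every element of $G$ is a product of $n^{O(1)}$ elements of $\bigcup_{\fl w\fr\in\Sigma}T_w$; together these give $\diam(\Cay(G,X))\le n^{O(1)}$.

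For (b) we invoke the Bruhat decomposition of $\Sp(V)$ (relative type $C$) or $\SU(V)$ (relative type $C$ or $BC$), which is available over any field: an element is a bounded product of a torus element, a Weyl‑group element and two unipotent elements; the torus element is a product of $O(n)$ factors, each lying in an $\SL_2$ generated by two opposite long root subgroups; the Weyl element is a product of $O(n^2)$ reflections; and a unipotent element is a product of $O(n^2)$ root subgroup elements. Long root subgroups are themselves transvection subgroups; a short root subgroup element is a product of two commuting transvection subgroup elements whose centres span the relevant totally isotropic plane (for instance, in the symplectic case, $U_{e_i-e_j}$ is the product of the transvection subgroups with centres $e_i+f_j$ and $e_i-f_j$), and the extra root length occurring in type $BC$ is a bounded product of transvections inside an $\SU_3$. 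Thus, taking $\Sigma$ to consist of the centres of the vectors of a symplectic/hermitian basis together with the $O(n^2)$ further isotropic vectors (the $e_i\pm f_j$, etc.) appearing above, (b) follows. This is where the hypothesis $|K_0|>2$ is used: it rules out the case $K_0=\FF 2$ (where $\SL_2(K_0)$ and $\SU_3$ over $K/K_0$ are degenerate), and it guarantees that two transvection subgroups with non‑perpendicular centres generate a copy of $\SL_2(K_0)$ rather than a soluble group, and that $\Sp(V)$ and $\SU(V)$ are generated by their transvections.

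For (a) we first note that, since $\fl X\fr=G$ acts irreducibly on $V$, the $K$‑subspaces $U_j:=\fl\, gv:g\in(X\cup X^{-1})^{\le j}\,\fr$ increase strictly until they fill $V$, so $U_n=V$; hence within word length $O(n)$ we reach transvection subgroups $T_w$ whose centres run over a spanning set $S$ of isotropic lines. The key device is the \emph{$\SL_2$‑dance}: if $T_{w_1},T_{w_2}$ have been reached within word length $L$ and $\fl w_1,w_2\fr\ne 0$, then since every element of $\fl T_{w_1},T_{w_2}\fr\cong\SL_2(K_0)$ is a product of at most four elements of $T_{w_1}\cup T_{w_2}$, conjugating $T_{w_1}$ by such elements reaches $T_w$ for \emph{every} isotropic line $\fl w\fr$ of the plane $\fl w_1,w_2\fr$ within word length $O(L)$. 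Running this over $S$ in a balanced binary‑merge pattern — where each merge needs only one round of dances to pass from ``all isotropic lines of $A$ and of $B$'' to ``all isotropic lines of $A+B$'', by choosing a target $w=a+b$ with $a\in A$, $b\in B$ and $\fl a,b\fr\ne 0$ (adjusting by $A\cap B$ when necessary) — multiplies the word‑length bound by only a constant at each of the $O(\log n)$ levels. Hence after $O(\log n)$ rounds we have reached $T_w$ for every isotropic line of $V$, in particular all of $\Sigma$, within word length $n^{O(1)}$, and (a) follows. Combining (a) and (b) gives the theorem.

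The crux is precisely the word‑length bookkeeping in the last step: a one‑space‑at‑a‑time induction, or a Bruhat decomposition performed at each merge, compounds a $\mathrm{poly}(n)$ factor polynomially (or geometrically) often and yields only a quasi‑polynomial bound, so the two tasks must be decoupled — first reach \emph{all} transvection subgroups, each of the $O(\log n)$ merges costing merely a constant factor, and only then run a single Bruhat decomposition. The remaining delicate points are geometric: verifying that the dances really sweep out all isotropic lines (so that the exceptional sub‑loci on which $w=a+b$ cannot be arranged with $\fl a,b\fr\ne 0$ are avoidable), and, for $\SU(V)$, controlling the third root length in type $BC$ and using that $\SU_2$ is transitive only on the isotropic lines of a hermitian plane — which is enough, since unitary transvections have isotropic centre anyway.
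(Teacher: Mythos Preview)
Your overall architecture matches the paper's: conjugate the given transvection subgroup around to reach a spanning family, propagate via ``$\SL_2$-dances'' in a binary-merge pattern until every transvection subgroup is reached in length $n^{O(1)}$, and then write an arbitrary element of $G$ as a product of $O(n^2)$ transvections. Your part (b) via Bruhat is a valid substitute for the paper's appeal to Liebeck--Shalev/Gaussian elimination (though your description of short root elements as products of two commuting transvections with centres $e_i\pm f_j$ is not literally correct; what is true and sufficient is that any root element of $\Sp$ or $\SU$ lies in a bounded product of transvection subgroups).

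The genuine gap is in the merge step. You assert that one round of dances takes ``all isotropic lines of $A$ and of $B$'' to ``all isotropic lines of $A+B$'', by writing each target as $w=a+b$ with $f(a,b)\ne 0$. But if $A\cap B=0$ this decomposition is unique up to scalars, so when $f(a,b)=0$ there is \emph{no} adjustment available and the dance is impossible; worse, if $A$ and $B$ happen to be mutually orthogonal (e.g.\ $A=\langle e_1\rangle$, $B=\langle e_2\rangle$ in a standard symplectic basis, which can certainly occur among the conjugates $gv$), all $T_a$ commute with all $T_b$ and the merge produces nothing new at all. You flag this as one of the ``remaining delicate points'' and call the exceptional loci ``avoidable'', but you give no mechanism for avoiding them, and a balanced binary merge on an unstructured spanning set will in general hit them. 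In the unitary case there is a second, independent obstruction you do not address: even when $f(a,b)\ne 0$, an isotropic $w\in A+B$ need not admit \emph{any} expression $w=a+b$ with $a\in A$, $b\in B$ both isotropic.

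The paper's proof supplies exactly the missing ingredient. Before merging, it first enlarges $X$ (at cost $O(n)$) so that the transvection graph has diameter~$2$ for every superset (Section~4.1), and then passes to the $K_0$-closure using the two-transvection proposition (\cref{prop:Dickson-with-K0-tr-sgrp}). With diameter~$2$ in hand, \cref{lem:t_a+b} (symplectic) and \cref{lem:t_a+b+c,lem:divide-to-singulars} (unitary) show that \emph{any} sum $T_{a+b}$ (resp.\ $T_{v_1+v_2+v_3}$) with singular target can be reached in $O(1)$ further dances, because the diameter-$2$ property guarantees auxiliary vectors $c,d$ that bridge the orthogonal case, and in the unitary case the three-term lemma absorbs the non-isotropic intermediate sums. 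Only after this is established does the paper run the balanced binary merge (Theorems~4.15 and~4.17). Your sketch would become correct if you inserted this diameter-reduction step before the merge and replaced ``one round of dances'' by an appeal to these lemmas.
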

\section{Preliminaries}
Throughout this paper, we use the notation $\ell_X(Y)$ for the length
of $Y$ over $X$. This is defined as follows. For any $X,Y\subset G$
with $Y\subset \langle X\rangle$ let $\ell_X(Y)$ be the smallest
number $k$ such that every element of $Y$ can be written as a product
of at most $k$  elements form $X\cup X^{-1}$, that is,
\[\ell_X(Y)=\min\{k\in \NN\,|\,Y\subset (X\cup X^{-1}\cup 1)^k\}.\]

Clearly, $\ell$ has the property
$\ell_X(Z)\leq \ell_X(Y)\cdot \ell_Y(Z)$ for any $X,Y,Z\subset G$ with
$Y \subset \langle X \rangle$, $Z \subset \langle Y \rangle$, which
makes us possible to ``cut'' the proof of \cref{thm:main} into steps
providing larger and larger generating sets having stronger and
stronger properties.  Using the next lemma, as a first step we can
assume that $X$ contains only transvections.  We formulate it as a
more general statement.
\begin{lem}\label{lem:Xconj}
  In order to prove Babai's conjecture for quasisimple groups, it is
  sufficient to assume that the generating set $X$ consists of
  conjugate elements.
\end{lem}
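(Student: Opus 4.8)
The plan is to leverage the composition inequality $\ell_X(G)\le\ell_X(Y)\cdot\ell_Y(G)$ recorded above in order to replace an arbitrary generating set $X$ by one, $Y$, consisting of pairwise conjugate elements, at the cost of only a logarithmic factor. Since a quasisimple group is non-abelian, $X$ contains an element $x\notin Z(G)$ (in the intended applications one takes $x$ to be a transvection belonging to $X$). Because $G$ is perfect and $G/Z(G)$ is simple, every normal subgroup of $G$ is central or equal to $G$, so the normal closure of the non-central element $x$ is all of $G$, i.e.\ $\langle x^G\rangle=G$. I would then set
\[
  Y=\{\,wxw^{-1}\ :\ w\in (X\cup X^{-1}\cup 1)^{k}\,\},
\]
where $k=O(\log|G|)$ is chosen below. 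Every element of $Y$ is conjugate to $x$, and $wxw^{-1}$ is a product of at most $2k+1$ elements of $X\cup X^{-1}$, so $\ell_X(Y)\le 2k+1$; the only thing to arrange is that such a $Y$ already generates $G$.

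For this, put $N_j=\langle\, wxw^{-1}:w\in (X\cup X^{-1}\cup 1)^{j}\,\rangle$, so that $N_0=\langle x\rangle$ and $N_0\le N_1\le N_2\le\cdots$. Splitting a word of length $j+1$ into its first letter and a word of length $j$ yields the recursion
\[
  N_{j+1}=\big\langle\, sN_js^{-1}\ :\ s\in X\cup X^{-1}\cup 1\,\big\rangle,
\]
from which $N_{j-1}=N_j$ forces $N_j=N_{j+1}$; hence the chain $(N_j)$ is strictly increasing until it becomes stationary. Its union $\bigcup_j N_j$ is a subgroup containing $\{wxw^{-1}:w\in G\}=x^G$, hence equals $\langle x^G\rangle=G$, so the chain does reach $G$, say $N_{k_0}=G$ with every earlier inclusion proper. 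Then $2^{k_0}\le|N_{k_0}|=|G|$ by Lagrange, so $k_0\le\log_2|G|$, and I take $k=k_0$ (equivalently $k=\lfloor\log_2|G|\rfloor$).

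Now $Y$ is a generating set of $G$ consisting of conjugate elements, so the hypothesis — Babai's conjecture for such generating sets — gives $\ell_Y(G)=\diam(\Cay(G,Y))\le(\log|G|)^{c}$. Combining,
\[
  \diam(\Cay(G,X))=\ell_X(G)\le\ell_X(Y)\cdot\ell_Y(G)\le(2\log_2|G|+1)\,(\log|G|)^{c}=O\!\big((\log|G|)^{c+1}\big),
\]
which is Babai's bound for the pair $(G,X)$ with the slightly larger exponent $c+1$. (Note that $Y$ may be enormous, but this is immaterial: only the fact that it generates $G$ is used.)

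The single delicate point — the \emph{main obstacle}, though a mild one — is the claim that the chain $(N_j)$ stabilizes after $O(\log|G|)$ steps \emph{and} stabilizes at $G$: the former follows from the displayed recursion together with the fact that a proper inclusion of subgroups at least doubles the order, while the latter is precisely where quasisimplicity enters, through $\langle x^G\rangle=G$. Everything else is routine bookkeeping with the length function $\ell_X$.
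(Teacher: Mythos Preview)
Your proof is correct and follows essentially the same approach as the paper: pick a non-central $x\in X$, build the conjugate set $Y=\{wxw^{-1}:w\in(X\cup X^{-1}\cup 1)^k\}$, and use the strictly increasing chain $N_j=\langle Y_j\rangle$ together with quasisimplicity to show $k=O(\log|G|)$ suffices for $\langle Y\rangle=G$, then conclude via $\ell_X(G)\le(2k+1)\ell_Y(G)$. The only cosmetic differences are that the paper conjugates by words in $X\cup\{1\}$ rather than $X\cup X^{-1}\cup\{1\}$, and argues stabilization implies normality directly rather than via the union containing $x^G$.
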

\begin{proof}
  Let $X$ be any generating set of the quasisimple group $G$.
  Then there exists a non-central element $t \in
  X$. For $m > 0$ an integer, let
  \[Y_m := \{x_m \cdots x_1 t x_1^{-1} \cdots x_m^{-1}\ :\
  x_1,\ldots,x_m \in X \cup \{ 1 \} \}.\] Consider the ascending chain
  $\{ \langle Y_i \rangle \}_{i=1}^{\infty}$ of subgroups of
  $G$. Assume that $\langle Y_r \rangle = \langle Y_{r+1} \rangle = H$
  for some $r$ and some subgroup $H$ of $G$. It follows that $H$ is
  closed under conjugation by elements of $X$. Since $X$ is a
  generating set of $G$, the subgroup $H$ must be normal in $G$. Since
  $G$ is quasisimple, $H$ is either central or equal to $G$. The
  subgroup $H$ cannot be central, since it contains conjugates of
  $t$. We conclude that $H = G$. It also follows that the length of
  the chain $\{ \langle Y_i \rangle \}_{i=1}^{\infty}$ is at most $r
  \leq \log_2 |G|$. Thus $Y := Y_{m}$ is a generating set for $G$ for
  any integer $m$ at least $\log |G|$. Furthermore, we have
  $\ell_X(G)\leq \ell_X(Y)\cdot \ell_Y(G)\leq
  (2\log_2|G|+1)\ell_Y(G)$. So, if Babai's bound holds for
  $\diam(\Cay(G,Y))=\ell_Y(G)$, then it also holds for
  $\diam(\Cay(G,X))$ (with a slightly larger $c$).
\end{proof}
\subsection{Linear, symplectic and unitary groups}
Let $F$ be any field and $n$ a positive integer. Let $V$ be a vector
space of dimension $n$ over $F$. Usually $F$ will denote the finite
field $\mathbb{F}_{q}$ of order $q$.

The group of all linear transformations on $V$ is denoted by
$\GL(V)$ or $\GL(n,F)$ or $\GL(n,q)$ in case
$F = \mathbb{F}_{q}$. Let $f : V \times V \to F$ be a map. An element
$g$ in $\GL(V)$ is said to preserve $f$ if
$f(gu, gv) = f(u,v)$ for all $u$, $v \in V$. The set of all elements
of $\GL(V)$ preserving $f$ is called the isometry group of
$f$.

We will assume that the map $f$ is any of two types. It will be a
symplectic form, that is, a non-singular bilinear alternating form, or
it will be a unitary form, that is, a non-singular conjugate-symmetric
sesquilinear form. Note that $f$ is called an alternating form if
$f(v,v) = 0$ for all $v \in V$. From this it follows that $f$ is
skew-symmetric, that is, $f(u,v) = - f(v,u)$ for all $u$, $v \in
V$. In case $f$ is a symplectic form, $n$ must be even and the
isometry group of $f$ is the symplectic group $\Sp(V)$ or $\Sp(n,F)$
or $\Sp(n,q)$ in case $F = \mathbb{F}_{q}$. Now let $f$ be a unitary
form. Assume $q$ is the square of an integer $q_{0}$. Let $\sigma$ be
the automorphism of $F$ defined by the identity
$\sigma(\lambda) = \lambda^{q_{0}}$ for every $\lambda \in F$. The
form $f$ is called a conjugate-symmetric sesquilinear form if
$f(u, \lambda v + w) = \lambda f(u,v) + f(u,w)$ and
$f(w,u) = f(v,w)^{\sigma}$. The isometry group of a
conjugate-symmetric sesquilinear form is the general unitary group
$\GU(V)$ or $\GU(n,q)$. The group $\GU(n,q)$ has a subgroup, called
the special unitary group $\SU(V)$ or $\SU(n,q_{0})$ of index
$q_{0}+1$ consisting of all elements with determinant $1$.

Throughout the paper $G$ will denote $\Sp(V)$,
$\SU(V)$, or the special linear group $\SL(V)$. We
will assume that $n \geq 4$. In particular, $G$ is a quasisimple
group.
\subsection{Transvections}
The notion of a transvection was probably first introduced by
Artin. In this subsection we collect some basic facts about
transvections which can be found in \cite{Artin}.

In \cite[p. 160]{Artin} an element $t$ of $\GL(V)$ is called a
transvection if it keeps every vector of some hyperplane $W$ fixed and
moves any vector $v \in V$ by some vector of $W$, that is,
$t(v) - v \in W$ and $t(v)-v \ne 0$ if $v \not\in W$.

Artin determined the form of a transvection.  Let $\phi \not= 0$ be an
element of the dual space $V^{*}$ of $V$. The set of all vectors $v
\in V$ such that $\phi(v) = 0$ is a hyperplane $W$. If $\psi \in
V^{*}$ also describes $W$, then $\psi = c\cdot\phi$ for some $c \in
F^{\times}$. Let $t$ be a transvection and let $W$ be the associated
hyperplane. Let $\phi \in V^{*}$ be associated to $W$. Then $t$ has
the form $t(x) = x + \phi(x) w$ for $x \in V$ and for some fixed $w$
in $W$, that is, $\phi(w) = 0$. Conversely, any map of this form has
fixed point space the hyperplane associated to $\phi$ and every vector
in $V$ is moved by a multiple of $w$. If $t \not= 1$, then the
$1$-dimensional subspace of $W$ generated by $w$ is called the
direction of $t$.

It can be easily seen that every transvection is inside $\SL(V)$, and
they are all conjugate to each other in $GL(V)$. Moreover, if $n\geq 3$, then
they are all conjugate even in $\SL(V)$.
\subsection{Transvections in $\SL(V)$, $\Sp(V)$
  and $\SU(V)$}
Recall that throughout this paper $G$ is any of the groups $\SL(V)$,
$\Sp(V)$, $\SU(V)$ where $F$ is the finite field $\mathbb{F}_{q}$ and
$V$ is a finite dimensional vector space over $F$. In each cases, let
$\mt$ denote the set of all transvections in $G$.

In the following we borrow several notions and concepts borrowed from
\cite{HZ}.  Fix $0 \not= u \in V$ and $0 \not= \phi \in V^{*} =
\mathrm{Hom}(V,F)$ such that $\phi(u) = 0$. The element $u \otimes
\phi \in V \otimes V^{*}$ may be viewed as an endomorphism of $V$ and
$t = 1 + u \otimes \phi$ is a transvection satisfying $t(x) = x +
\phi(x) u$ for all $x \in V$. In fact, the set of all transvections in
$\SL(V)$ is
\[
\mt(\SL(V))=\{ 1 + u \otimes \phi \mid 0 \not= u \in V, 0 \not= \phi \in
V^{*}, \phi(u)=0\}.
\]
Note that $1 + u \otimes \phi = 1 + v \otimes \psi$ for
some other choice of $0 \not= v \in V$ and $0 \not= \psi \in V^{*}$
such that $\psi(v) = 0$, if and only if, $v = \lambda u$ and
$\phi =\lambda \psi$ for some nonzero $\lambda \in F$.

From now on, if we write $1 + v \otimes \psi$ for a transvection we
assume $0 \ne v \in V$ and $0 \ne \psi \in V^{*}$ with $\psi(v)=0$.

Let $f$ be a non-degenerate symplectic or conjugate-symmetric sesquilinear
form on $V$. For $u \in V$ let
$\varphi_{u} \in V^{*} = \mathrm{Hom}(V,F)$ be the map defined as
$\varphi_{u}(x) = f(u,x)$ for all $x \in V$. Let
$t = 1 + u \otimes \phi$ be a transvection and let $x$, $y \in V$
be arbitrary subject to the conditions $x \in \mathrm{ker}(\phi)$
and $y \not\in \mathrm{ker}(\phi)$. Assume that $t$ preserves
$f$. Then $f(x,y) = f(tx, ty) = f(x, y+\phi(y)u)$. From this it
follows that $\phi(y) f(x,u) = 0$, that is,
$x \in \mathrm{ker}(\varphi_{u})$. Thus
$\mathrm{ker}(\phi) = \mathrm{ker}(\varphi_{u})$ since both spaces
have dimension $n-1$. This is equivalent to saying that
$\phi = \lambda \varphi_{u}$ for some $0 \not= \lambda \in
F$. Notice also that
$f(u,u) = \varphi_{u}(u) = \lambda^{-1} \phi(u) = 0$. We conclude
that the set of transvections in $\SL(V)$ which preserve the
form $f$ is contained in the set
$\{ 1 + \lambda u \otimes \varphi_{u} \mid \lambda \in F^{\times}, 0 \not=
u \in V, f(u,u) = 0 \}$.

In case $f$ is a symplectic form, the elements of this latter set are
called symplectic transvections and are precisely the transvections
contained in $\Sp(V)$ (see \cite[Exercise 3.20]{Wilson}), so we have
\[
\mt(\Sp(V))=\{ 1 + \lambda u \otimes \varphi_{u} \mid \lambda \in
F^{\times}, 0 \not= u \in V, f(u,u) = 0 \}.
\]

Finally, in case $f$ is a conjugate-symmetric
sesquilinear form, the transvection $1 + \lambda u \otimes
\varphi_{u}$ with $\lambda \in F$, $0 \not= u \in V$ and $f(u,u) = 0$
preserves $f$ if and only if $\Tr(\lambda):=\lambda+\lambda^{q_{0}} =0$
(see \cite[Exercise 3.22]{Wilson}), so
\[
\mt(\Sp(U))=\{ 1 + \lambda u \otimes \varphi_{u} \mid  0
\not= u \in V,\,f(u,u) = 0,\;
\lambda \in F^{\times},\,\Tr(\lambda)=0\}.
\]
\subsection{Conjugating transvections with each other}

By \cref{lem:Xconj}, we can assume that $X$ contains only
transvections.  Starting from $X$, our goal is to create the conjugate
class of all the transvections. During the proof, our main tool to
achieve this goal will be to take conjugates $t_2t_1t_2^{-1}$ for some
already generated transvections $t_1$ and $t_2$.
The following lemma will be used many times during the proof.
\begin{lem} \label{lem:conj_tr} Let $t_1 := 1+a_1 \otimes \phi_1$ and $t_2
  := 1+a_2 \otimes \phi_2$ be two transvections.  Then
  \[
  t_2t_1t_2^{-1} = 1 + (a_1+\phi_2(a_1)a_2) \otimes
  (\phi_1-\phi_1(a_2) \phi_2).
  \]
\end{lem}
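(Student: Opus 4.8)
The statement to prove is a clean algebraic identity, so the plan is essentially a direct computation, carefully bookkeeping how $t_2$ acts on the rank-one endomorphism $a_1 \otimes \phi_1$ from both sides. First I would record the two ingredients: since $t_2 = 1 + a_2 \otimes \phi_2$ with $\phi_2(a_2)=0$, one has $t_2^{-1} = 1 - a_2 \otimes \phi_2$ (this is immediate because $(a_2\otimes\phi_2)^2 = \phi_2(a_2)\, a_2\otimes\phi_2 = 0$). Then I would compute $t_2 t_1 t_2^{-1} = (1 + a_2\otimes\phi_2)(1 + a_1\otimes\phi_1)(1 - a_2\otimes\phi_2)$ by expanding the product of the three factors.

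The key observation making the expansion painless is that conjugation is an algebra automorphism of $\End(V)$, so $t_2 t_1 t_2^{-1} = 1 + t_2 (a_1\otimes\phi_1) t_2^{-1}$, and it suffices to track a single rank-one piece. For rank-one endomorphisms the relevant rules are $g \cdot (a\otimes\phi) = (ga)\otimes \phi$ for $g\in\GL(V)$ acting on the left, and $(a\otimes\phi)\cdot g = a \otimes (\phi \circ g)$ acting on the right, i.e. $(a\otimes\phi)g = a\otimes(g^{\mathsf T}\phi)$. Applying these: $t_2(a_1\otimes\phi_1) = (t_2 a_1)\otimes\phi_1 = (a_1 + \phi_2(a_1) a_2)\otimes \phi_1$, since $t_2 a_1 = a_1 + \phi_2(a_1) a_2$. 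Then multiplying on the right by $t_2^{-1} = 1 - a_2\otimes\phi_2$ turns $\phi_1$ into $\phi_1 \circ t_2^{-1}$, and for any $x$ we have $(\phi_1\circ t_2^{-1})(x) = \phi_1(x - \phi_2(x) a_2) = \phi_1(x) - \phi_1(a_2)\phi_2(x)$, i.e. $\phi_1 \circ t_2^{-1} = \phi_1 - \phi_1(a_2)\phi_2$. Combining the two factors gives exactly $1 + (a_1 + \phi_2(a_1) a_2)\otimes(\phi_1 - \phi_1(a_2)\phi_2)$, as claimed.

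There is essentially no obstacle here — the only thing to be slightly careful about is the direction conventions for how a matrix acts on $V^*$ (whether one writes $\phi\mapsto \phi\circ g$ or $\phi\mapsto\phi\circ g^{-1}$), and one should double check that no spurious cross term survives: a priori the middle product could produce a term like $\phi_2(a_1)\,a_2\otimes(\phi_1(a_2)\phi_2)$ with a sign, but since $t_2$ and $t_2^{-1}$ are genuine inverses the bookkeeping is forced to be consistent, and one simply verifies the identity holds as endomorphisms by evaluating both sides on an arbitrary $x\in V$. If one prefers an even more explicit route, one can just expand all three factors, note that every term containing $(a_2\otimes\phi_2)^2$ vanishes because $\phi_2(a_2)=0$, and collect the remaining four terms; they regroup into the stated tensor product. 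Either way the proof is a couple of lines.
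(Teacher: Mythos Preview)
Your proposal is correct and essentially matches the paper's approach: a direct computation. The paper carries it out by applying $t_2^{-1}$, then $t_1$, then $t_2$ successively to an arbitrary $x\in V$ and collecting terms---exactly one of the explicit routes you mention---while your primary route via $t_2(a_1\otimes\phi_1)t_2^{-1} = (t_2 a_1)\otimes(\phi_1\circ t_2^{-1})$ is a slightly cleaner repackaging of the same calculation.
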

\begin{proof}
  For $x \in V$, we have
  \begin{align*}
  t_2t_1t_2^{-1}(x)
  & = (1+a_2 \otimes \phi_2) (1+a_1 \otimes \phi_1)
    (1-a_2 \otimes \phi_2) (x) \\
  & = (1+a_2 \otimes \phi_2) (1+a_1 \otimes \phi_1) (x-\phi_2(x)a_2) \\
  & = (1+a_2 \otimes \phi_2)
    (x-\phi_2(x)a_2+\phi_1(x)a_1-\phi_2(x)\phi_1(a_2)a_1) \\
  & = x+\phi_1(x)a_1-\phi_2(x) \phi_1(a_2) a_1 +
    \phi_2(a_1)\phi_1(x)a_2-\phi_2(a_1)\phi_2(x)\phi_1(a_2)a_2 \\
  & = x + (\phi_1(x)-\phi_1(a_2)\phi_2(x))
    \cdot (a_1+\phi_2(a_1)a_2) \\
  & = (1+(a_1+\phi_2(a_1)a_2) \otimes
    (\phi_1-\phi_1(a_2) \phi_2))(x).
  \end{align*}
  This proves the lemma.
\end{proof}
\subsection{Transvection groups}
Let $t = 1 + u \otimes \phi \in \mt$. For any
$\lambda \in \mathbb{F}_{q}^{\times}$, let the transvection
$1 + \lambda u \otimes \phi$ be denoted by $t^{\lambda}$. We have
noted that $t^{\lambda} t^{\mu} = t^{\lambda + \mu}$ for any $\lambda$
and $\mu$ in $\mathbb{F}_{q}^{\times}$. In particular,
$t^{-1} = 1 + (-u) \otimes \phi= 1 - u \otimes \phi$.

For an arbitrary subset $\Lambda \subset \mathbb{F}_{q}$, let
$t^{\Lambda} := \{t^\lambda\,|\,\lambda\in\Lambda\}$. This set is a
group if and only if $\Lambda$ is a subgroup of the additive group of
$\FF q$.

In case $G$ is a unitary group the notation $q_{0}$ was introduced to
be the prime power which is the square root of $q$. For a unified
treatment, we set $q_{0} = q$ in the cases when $G$ is a special
linear or a symplectic group. Using this notation, for any $t\in \mt$
we have that $\mathbb{F}_{q_0}$ is the largest subset $\Lambda$ of
$\mathbb{F}_q$ such that $t^\Lambda \subset \mt$. We call
$t^{\mathbb{F}_{q_0}}$ the full transvection subgroup of $G$
containing $t$. Clearly, $t^{\mathbb{F}_{q_0}}$ contains the cyclic
group $\langle t \rangle$ and the containment is proper if and only if
$q_{0}$ is not a prime. More generally, for any subfield
$K$ of $\mathbb{F}_{q_0}$, we call $t^K$ the transvection subgroup
over $K$ containing $t$.
Let $Y\subset \mt$ be any subset of transvections in $G$.  For any
subfield $K$ of $\mathbb{F}_{q_0}$, the $K$-closure of $Y$ is defined
as the set $Y^K:=\{t^\lambda\,|\,t\in Y,\,\lambda\in K^\times\}$. We say that
$Y$ is $K$-closed if $Y^K = Y$.

Let $G$ be a symplectic or a unitary group. In this case for every
singular vector $0\neq v\in V$, there is a unique associated full
transvection subgroup, which we denote by $T_v$. In the symplectic
case, let $\lambda_0=1$ and let $\mf=\mathbb{F}_{q_0}=\mathbb{F}_q$,
while in the unitary case, let $\lambda_0$ be a fixed element of
$\mathbb{F}_q$ with $\Tr(\lambda_0)=0$, and let
$\mf=\{\lambda\in\mathbb{F}_{q} \,|\,\Tr(\lambda)=0\}$. In both cases
$1+\lambda_0\cdot v\otimes \varphi_v\in \mt$. Now
$\mf=\mathbb{F}_{q_0}\lambda_0$ is a one-dimensional
$\mathbb{F}_{q_0}$-subspace of $\mathbb{F}_q$ and $T_v=(1+v\otimes
\varphi_v)^\mf= {(1+\lambda_0\cdot v\otimes
  \varphi_v)}^{\mathbb{F}_{q_0}}$.  Note that $T_v=T_{\lambda v}$ for
every singular vector $0 \ne v \in V$ and for every $\lambda \in
F_q^\times$.
\subsection{Transvection graph}
As before, let $G$ be any of the groups $\SL(V)$,
$\Sp(V)$, $\SU(V)$ and let $\mt$ be the set of all
transvections in $G$. Again, let $F$ be the finite field
$\mathbb{F}_{q}$. Let $Y$ be any subset of $\mt$. We introduce the
transvection graph $\Gamma(Y)$ (as in \cite{BH}) and its labelled
version $\lGamma(Y)$.

The directed graph $\Gamma(Y)$ has vertex set $Y$ and two vertices
$s=1+u\otimes \phi$ and $t=1+v\otimes \psi$ are connected by a
directed edge $[s,t]$ running from $s$ to $t$ if and only if
$\psi(u)\neq 0$. The set of edges in $\Gamma(Y)$ will be denoted by
$E(Y)$. We say that the edge $[s,t] \in E(Y)$ is one-way directed if
$[t,s] \notin E(Y)$, otherwise $(s,t)$ is called a two-way directed
edge. This difference of notation will become clear when we introduce
cycles.

In the symplectic and unitary case, for any two transvections
$s=1+\lambda\cdot u\otimes \varphi_u$ and $t=1+\mu\cdot v\otimes
\varphi_v$ in $Y$ we have $[s,t]\in E(Y)$ if and only if
$\varphi_v(u)=f(v,u)\neq 0$ if and only if $[t,s]\in E(Y)$. Hence
every edge in $\Gamma(Y)$ is two-way directed and $\Gamma(Y)$ can be
seen as an undirected graph. In that case $\Gamma(Y)$ is just the same
as the non-commuting graph (which was probably first mentioned in
\cite{neumann-erdos}) of $G$ restricted to the vertex set $Y$. In
general, $st\neq ts$ if and only if at least one of $[s,t]\in E(Y)$
and $[t,s]\in E(Y)$ holds. Therefore, in the linear case we can think
of $\Gamma(Y)$ as a refinement of the non-commuting graph on $Y$.

We next define the labelled transvection graph $\lGamma(Y)$.
First, for every $s\in Y$, we fix
$u_s\in V$ and $\phi_s\in V^*$ satisfying $s=1+u_s\otimes \phi_s$.
Then $\lGamma(Y)$ is a complete directed graph with label
$l(s,t)=\phi_t(u_s)\in \mathbb{F}_q$ for every $(s,t)\in Y\times Y$.
(Note that $l(s,t)$ does not only depend on $s$ and $t$ but also on how
$u_s,\phi_s,u_t,\phi_t$ were chosen.)
Note that $\Gamma(Y)$ can be derived from $\lGamma(Y)$ simply
be deleting all labels and all edges with label $0$.

In the special case when $Y=\mt$ we get the full transvection graph
$\Gamma(\mt)$ and its labelled version $\lGamma(\mt)$. For
$Y \subseteq \mt$, the graphs $\Gamma(Y)$ and $\lGamma(Y)$ can be seen
as the subgraphs of $\Gamma(\mt)$ and $\lGamma(\mt)$ induced by $Y$,
respectively.

We now define the weight of a cycle in $\lGamma(Y)$. This concept will
be the main tool in \cref{sec:closure-of-Y}.

For any integer $k \geq 2$ and for any transvections
$s_1,s_2,\ldots,s_k\in Y$, let
\[
w(s_1,s_2,\ldots,s_k):=l(s_1,s_2)l(s_2,s_3) \ldots l(s_{k-1},s_k)l(s_k,s_1)
\]
be the weight of the $k$-tuple $(s_1,\ldots,s_k)$.  If
$w(s_1,\ldots,s_k)\neq 0$ then we say that $(s_1,\ldots,s_k)$ is a
cycle (or closed path) in $\Gamma(Y)$.

Note that unlike the labels $l(s_i,s_j)$, the weight
$w(s_1,\ldots,s_k)$ depends only on the transvections
$s_1,\ldots, s_k$. In order to distinguish paths from cycles, we
use the notation $[s_1,s_2,\ldots,s_k]$ for a directed path of length $k$
and the notation $(s_1,s_2,\ldots,s_k)$ for a directed $k$-cycle.

\section{Determining groups generated by
  transvections in terms of the transvection graph}
Throughout this section let $Y\subset \mt=\mt(SL(V))$ be any fixed subset of
transvections and $H=\fl Y\fr\leq \SL(V)$.  The goal of this section is
to give answers to the following general question: How do the properties
of the (weighted) transvection graph $\Gamma(Y)$ reflect the
properties of the generated subgroup $H=\fl Y\fr \leq \SL(V)$?

In the following we give several conditions of this type.
\subsection{Determining the irreducibility of $\mathbf{H}$}
Recall that a directed graph $\Gamma$ is called strongly connected if
for every two distinct vertices $a,b$ in $\Gamma$ there exists a directed
path from $a$ to $b$. It is easy to see that strongly connectedness is
equivalent to the following condition: for every non-empty proper
subset $Z$ of vertices of $\Gamma$ there exists an edge going from a
vertex inside $Z$ to a vertex outside  $Z$.

We define the $V$-part and the $V^*$-part of $Y\subset \mt$  as
follows
\begin{align*}
  _V Y&:=\{v\in V\,|\,\exists\phi\in V^*\textrm{ s.t. }
        1+v\otimes\phi\in Y\},\\
  Y_{V^*}&:=\{\phi\in V^*\,|\,\exists v\in V\textrm{ s.t. }
             1+v\otimes\phi\in Y\}.
\end{align*}
\begin{thm}\label{thm:irred-condition}
  $H$ acts irreducibly on $V$ if and only if the following three
  conditions hold.
  \begin{enumerate}
  \item $_V Y$ generates $V$;
  \item $Y_{V^*}$ generates $V^*$;
  \item $\Gamma(Y)$ is strongly connected.
  \end{enumerate}
\end{thm}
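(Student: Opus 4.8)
The plan is to prove both directions by relating invariant subspaces of $H = \langle Y \rangle$ to structure in the graph $\Gamma(Y)$. For a transvection $t = 1 + u \otimes \phi$, recall that $t$ acts trivially exactly on $\ker \phi$ and sends every $x$ to $x + \phi(x)u$; thus the "moving direction" is $\langle u \rangle$ and the "invariant hyperplane" is $\ker \phi$. The key elementary observation I would establish first is: if $U \le V$ is a subspace invariant under $t = 1 + u \otimes \phi$, then either $u \in U$ (equivalently $t$ acts nontrivially on $U$ through its full direction) or $U \subseteq \ker \phi$ (equivalently $t|_U = \mathrm{id}$). Dually, a subspace $W \le V^*$ is invariant under $t^* = 1 + \phi \otimes u$ iff either $\phi \in W$ or $W \subseteq \ker u = \{\psi : \psi(u) = 0\}$. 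This dichotomy is what will let the combinatorial hypotheses bite.

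For the "only if" direction, assume $H$ is irreducible. Condition (1): the subspace $\mathrm{span}(\,_VY)$ is clearly $H$-invariant — each generator $t = 1 + u \otimes \phi$ with $u \in \,_VY$ maps $x \mapsto x + \phi(x)u$, and $\phi(x)u$ lies in the span — so by irreducibility it is all of $V$. Condition (2) is the dual statement applied to the contragredient action of $H$ on $V^*$, which is also irreducible. Condition (3): suppose $\Gamma(Y)$ is not strongly connected, so there is a nonempty proper $Z \subsetneq Y$ with no edge from $Z$ to $Y \setminus Z$; that means for every $s = 1 + u_s \otimes \phi_s$ in $Z$ and every $t = 1 + u_t \otimes \phi_t$ in $Y\setminus Z$ we have $\phi_t(u_s) = 0$, i.e. $u_s \in \ker \phi_t$. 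I would then consider $U := \mathrm{span}\{u_s : s \in Z\}$ and check it is $H$-invariant: generators in $Z$ preserve $U$ because they move everything into their own direction (in $U$), while generators $t$ in $Y \setminus Z$ fix $U$ pointwise since $U \subseteq \bigcap_{s \in Z}\ker\phi_t \subseteq \ker\phi_t$. Since $Z \ne \emptyset$ we have $U \ne 0$; I must also argue $U \ne V$, which follows because if $U = V$ then every $\phi_t$ with $t \in Y\setminus Z$ vanishes on all of $V$, forcing $\phi_t = 0$, impossible. This contradicts irreducibility.

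For the "if" direction, assume (1), (2), (3) and let $0 \ne U \subsetneq V$ be an $H$-invariant subspace, aiming for a contradiction. Partition $Y = Y_0 \sqcup Y_1$ where $Y_1 = \{t = 1 + u \otimes \phi \in Y : u \in U\}$ (generators acting with direction inside $U$) and $Y_0 = \{t \in Y : U \subseteq \ker\phi\}$ (generators fixing $U$ pointwise); by the dichotomy above every generator falls into exactly one class, and both are nonempty — $Y_1 \ne \emptyset$ because otherwise every generator fixes $U$ pointwise so $H$ fixes $U$ pointwise, but then the $\phi$'s all kill $U$ and $U \subseteq \ker \phi$ for all, contradicting... actually I would argue $Y_0 \ne \emptyset$ and $Y_1 \ne \emptyset$ via conditions (1) and (2): if $Y_0 = \emptyset$ then all directions $u$ lie in $U$, so $\mathrm{span}(\,_VY) \subseteq U \subsetneq V$, contradicting (1); if $Y_1 = \emptyset$ then all $\phi$ vanish on $U \ne 0$, so $\mathrm{span}(Y_{V^*})$ lies in a proper subspace of $V^*$, contradicting (2). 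Now here is the crux and the main obstacle: I claim there is no edge in $\Gamma(Y)$ from $Y_1$ to $Y_0$. Take $s = 1 + u_s \otimes \phi_s \in Y_1$ (so $u_s \in U$) and $t = 1 + u_t \otimes \phi_t \in Y_0$ (so $U \subseteq \ker \phi_t$); then $\phi_t(u_s) = 0$, so indeed $[s,t] \notin E(Y)$. This means every edge leaving $Y_1$ stays in $Y_1$, so $Y_1$ is a nonempty proper subset with no outgoing edge — contradicting strong connectedness of $\Gamma(Y)$ by the reformulation given in the text.

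The step I expect to require the most care is verifying the invariant-subspace dichotomy cleanly and making sure the partition $Y = Y_0 \sqcup Y_1$ is genuinely a partition (that no generator lies in both classes — which would force $\phi_s = 0$ or $u_s = 0$) and that the edge-direction argument is symmetric enough; one must be slightly careful that "$U \subseteq \ker\phi_t$" is exactly the condition "$t$ fixes $U$ pointwise" and that "$u_s \in U$" combined with $H$-invariance of $U$ is consistent (it is, since $s$ then maps $x \mapsto x + \phi_s(x)u_s \in U$ whenever $x \in U$, using nothing about $\phi_s|_U$). Everything else is routine linear algebra once the dichotomy is in place.
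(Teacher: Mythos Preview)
Your proof is correct and takes a somewhat different route from the paper's. For the ``if'' direction, the paper argues constructively: starting from an arbitrary nonzero $u\in V$, it shows the cyclic $H$-submodule $\langle Hu\rangle$ must contain some direction vector from $_VY$ (using (2)), then propagates along edges (using (3)) to get all of $_VY$ inside, and concludes via (1). You instead argue contrapositively via the invariant-subspace dichotomy, producing from a proper invariant $U$ a nonempty proper set $Y_1$ with no outgoing edge. For the ``only if'' direction, the paper proves (1) and (3) simultaneously by looking at the set $Z$ of vertices reachable from a fixed $t$ and showing $\langle {_VZ}\rangle$ is $H$-invariant, and then handles (2) by a separate dimension count on $\bigcap_j\ker\phi_j$. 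You instead treat (1) and (2) by the direct invariant-span argument and its dual on $V^*$, and prove (3) independently. Your approach is arguably cleaner and more symmetric, organising everything around the dichotomy; the paper's is more hands-on but avoids invoking the contragredient representation.

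One small correction: your parenthetical claim that a transvection lying in both $Y_0$ and $Y_1$ would force $\phi_s=0$ or $u_s=0$ is false --- for instance if $U$ happens to contain $u_s$ and also lie inside $\ker\phi_s$ (perfectly consistent, since $\phi_s(u_s)=0$ always). Fortunately your argument does not actually need $Y_0\cap Y_1=\emptyset$: it only needs $Y_1\neq\emptyset$ and $Y_1\neq Y$ (which you establish correctly from (2) and (1) respectively), together with $Y\setminus Y_1\subseteq Y_0$ (which is exactly the dichotomy). So simply drop the disjointness claim and the proof stands.
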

\begin{proof}
  We may assume that the dimension of $V$ is at least $2$ and that
  $|Y| \geq 2$.

  Let $0 \neq u \in V$ and let $U := \langle h(u) : h \in H
  \rangle$. Observe that since $U$ is the smallest $H$-invariant subspace
  containing $u$, the condition
  that $H$ acts irreducibly on $V$ is equivalent to $U = V$ for every $0\neq
  u\in V$.

  Let $t_1 = 1+a_1 \otimes \phi_1$ and $t_2 = 1+a_2 \otimes \phi_2$ be
  two distinct transvections in $Y$.  Assume that there is a directed
  edge in $\Gamma(Y)$ from $t_{1}$ to $t_{2}$, that is, $\phi_2(a_1)
  \neq 0$. Observe that if $a_{1} \in U$, then $a_{2} =
  \phi_{2}(a_{1})^{-1} \cdot (t_2(a_1)-a_1) \in U$, since $U$ is
  $H$-invariant.

  Assume that the three conditions of the statement hold.

  Since $Y_{V^*}$ generates $V^*$, there exists $t = 1 + a \otimes
  \phi \in Y$ such that $\phi(u) \neq 0$. Observe that $a =
  \phi(u)^{-1} \cdot (t(u)-u) \in U$.

  Since $\Gamma(Y)$ is strongly connected, the fifth and the third
  paragraphs imply that $_V Y \subseteq U$. The first condition
  provides $V = U$.

  Assume now that $H$ acts irreducibly on $V$.

  Let $t = 1 + w \otimes \phi \in Y$ be arbitrary and let $Z$ be the
  set of all transvections in $Y$ which may be reached from $t$ by a
  directed path in $\Gamma(Y)$. Observe that $w \in {_V Z}$ and
  $\langle _V Z \rangle$ is $H$-invariant, so $\langle _V Z \rangle =
  V$. Part (1) follows. We claim that $Z = Y$. Assume for a
  contradiction that $Y \setminus Z \not= \emptyset$. Let $\psi \in (Y
  \setminus Z)_{V^*}$. Then $\psi$ vanishes on $_V Z$ and so also on
  $V$, which is impossible. Since $t$ was chosen arbitrarily,
  $\Gamma(Y)$ must be strongly connected, giving (3).

  We now prove (2). Let $L := \langle {Y_{V^*}} \rangle \leq V^*$.
  Then $Y_{V^*}$ contains a basis $\{\phi_1,\ldots,\phi_m\}$ for $L$,
  where $m \leq n= \dim(V)$. Note that
  $I := \bigcap_{j=1}^m \ker(\phi_j) = \{0\}$, for if $0 \neq v \in I$
  then $v$ is fixed by $H$ contradicting the fact that $H$ acts
  irreducibly on $V$ (and the dimension of $V$ is at least $2$). We
  have $m \le n$ subspaces of $V$ of codimension $1$
  whose intersection is trivial.  This implies that $m=n$, in other
  words $L=V^*$.
\end{proof}
\subsection{Determining the defining field for $\mathbf{H}$}
\label{sec:deffield}
The next problem we deal with is to determine the smallest subfield
$L\leq \FF q$ such that $H$ is realisable over $L$, that is, such that
$H$ is conjugate to a subgroup of $\SL(n,L)\leq \SL(V)$. It turns out
that $L$ can be determined from the weights of cycles of $\Gamma(Y)$:
\begin{prop}(\cite[Proposition 2]{Dimartino-etal}) The field generated
  by the weights of all the cycles in $\Gamma(Y)$ is equal to the
  field generated by the traces of the matrices in $H$.
\end{prop}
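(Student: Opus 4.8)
Write $L_1$ for the subfield of $\FF q$ generated by the weights $w(s_1,\dots,s_k)$ of all cycles in $\Gamma(Y)$ (equivalently, by the whole set $\{w(s_1,\dots,s_k)\mid k\geq 2,\ s_i\in Y\}$, since adjoining $0$ changes nothing), and write $L_2$ for the subfield generated by $\{\Tr(h)\mid h\in H\}$; both contain the prime field $\FF p$. The plan is to deduce $L_1=L_2$ from one trace identity for products of transvections. Its computational core is the elementary fact that for rank-one endomorphisms $(a\otimes\phi)(b\otimes\psi)=\phi(b)\,(a\otimes\psi)$, whence by induction on $m$ one gets $(u_1\otimes\phi_1)\cdots(u_m\otimes\phi_m)=\phi_1(u_2)\phi_2(u_3)\cdots\phi_{m-1}(u_m)\,(u_1\otimes\phi_m)$, and therefore $\Tr\big((u_1\otimes\phi_1)\cdots(u_m\otimes\phi_m)\big)=\phi_1(u_2)\cdots\phi_{m-1}(u_m)\phi_m(u_1)$.

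The key step is to prove that, for $s_1,\dots,s_k\in\mt$ written as $s_i=1+u_i\otimes\phi_i$,
\[
  \Tr\big(s_k s_{k-1}\cdots s_1\big)=n+\sum_{\substack{\{i_1<\dots<i_m\}\subseteq\{1,\dots,k\}\\ m\geq 2}}w(s_{i_1},\dots,s_{i_m}),\qquad n=\dim V.
\]
This follows by expanding $\prod_i(1+u_i\otimes\phi_i)$ as the sum over all subsets $S\subseteq\{1,\dots,k\}$ of the corresponding ordered products of the $u_i\otimes\phi_i$: the empty subset contributes $\Tr(\mathrm{id}_V)=n$, each singleton contributes $\phi_i(u_i)=0$ by the transvection relation, and a subset $\{i_1<\dots<i_m\}$ with $m\geq 2$ contributes, by the trace formula above, exactly $l(s_{i_1},s_{i_2})\cdots l(s_{i_{m-1}},s_{i_m})l(s_{i_m},s_{i_1})=w(s_{i_1},\dots,s_{i_m})$ (the product is taken in the order $s_k\cdots s_1$ precisely so that the indices come out increasing). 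The identity holds verbatim when some $s_i$ are replaced by their inverses $s_i^{-1}=1-u_i\otimes\phi_i$: replacing a $u_i$ by $-u_i$ only multiplies each weight in which it occurs by $\pm 1$, so every summand is still $\pm$ a weight of a tuple of transvections from $Y$, hence lies in $L_1$.

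Granting this, both inclusions are short. For $L_2\subseteq L_1$: any $h\in H=\fl Y\fr$ is a product of elements of $Y\cup Y^{-1}$, so the identity expresses $\Tr(h)$ as $n$ plus a sum of $(\pm)$weights of tuples over $Y$, all lying in $L_1$; hence $\Tr(h)\in L_1$. For $L_1\subseteq L_2$: I would show $w(s_1,\dots,s_m)\in L_2$ for all $m\geq 2$ and all $s_i\in Y$ by induction on $m$. For $m=2$ the identity gives $w(s_1,s_2)=\Tr(s_2s_1)-n\in L_2$. For $m>2$, in the identity for $\Tr(s_m s_{m-1}\cdots s_1)$ every summand indexed by a proper subset has length $<m$ and so lies in $L_2$ by the inductive hypothesis, while $\Tr(s_m\cdots s_1)$ and $n$ lie in $L_2$; this forces the one remaining summand $w(s_1,\dots,s_m)$ into $L_2$. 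In particular every cycle weight lies in $L_2$, so $L_1\subseteq L_2$, and hence $L_1=L_2$.

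I do not anticipate a genuine obstacle; the argument is essentially the bookkeeping in the displayed trace identity. The one point requiring a little care is handling the inverses in $Y\cup Y^{-1}$ and the fact that the relevant tuples need not be simple cycles, which is why I would state the identity once for arbitrary tuples of transvections (repetitions and inverses allowed) and extract both directions uniformly from it.
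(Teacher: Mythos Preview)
The paper does not supply its own proof of this proposition; it is simply quoted from \cite[Proposition~2]{Dimartino-etal}. Your argument is correct and is in fact essentially the one given in that reference: expand the product $s_k\cdots s_1=\prod_i(1+u_i\otimes\phi_i)$, use the rank-one identities $(a\otimes\phi)(b\otimes\psi)=\phi(b)\,(a\otimes\psi)$ and $\Tr(a\otimes\phi)=\phi(a)$ to obtain the displayed formula
\[
  \Tr(s_k\cdots s_1)=n+\sum_{\substack{\{i_1<\dots<i_m\}\subseteq\{1,\dots,k\}\\ m\geq 2}} w(s_{i_1},\dots,s_{i_m}),
\]
and then read off both inclusions by the induction you describe.

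One point you flag in your last paragraph but do not actually carry out: for the inclusion $L_2\subseteq L_1$, an element $h\in H$ is a word in $Y\cup Y^{-1}$ in which the same transvection may occur several times, so the sub-tuples $(s_{i_1},\dots,s_{i_m})$ arising from subsets of positions need not consist of distinct elements of $Y$. Your parenthetical ``adjoining $0$ changes nothing'' only covers tuples of distinct elements that fail to be cycles; it does not by itself show that weights of tuples with repetitions lie in $L_1$. The missing observation is that if $r_i=r_j$ with $i<j$, then
\[
  w(r_1,\dots,r_m)=w(r_i,r_{i+1},\dots,r_{j-1})\cdot w(r_1,\dots,r_{i-1},r_j,r_{j+1},\dots,r_m),
\]
so by induction on $m$ every such weight is a product of weights of tuples with pairwise distinct entries (hence either cycle weights or zero) and therefore lies in $L_1$. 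With this added, your proof is complete.
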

\begin{rem}\label{rem:weights-gen-def-field}
  If $H$ is irreducible on $V$, then the field generated by the traces
  of the matrices in $H\leq \SL(V)$ is exactly the smallest subfield
  $L$ such that $H$ is realisable over $L$ (see \cite[Corollary
  9.23]{Isaacs}). Thus, the field generated by the weights of all
  the cycles in $\Gamma(Y)$ is equal to the smallest subfield $L$ of $\FF q$
  such that $H$ conjugates into $SL(n,L)\leq SL(n,q)\simeq SL(V)$
  (see \cite[Corollary 1]{Dimartino-etal}).
\end{rem}
\begin{lem} \label{genfq} Let $G$ be one of $\SL(n,q), \Sp(n,q)$ or
  $\SU(n,q)$.  Moreover, assume that $n \geq 3$ if $G$ is the unitary
  group. Then the traces of the elements of $G$ generate
  $\mathbb{F}_q$.
\end{lem}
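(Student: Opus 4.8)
The plan is to exhibit, in each of the three families, a small explicit collection of elements whose traces already generate $\FF q$. Since the relevant fields are $\FF q$ for $\SL$ and $\Sp$, and $\FF q$ (not $\FF{q_0}$) for $\SU$, the natural first move is to look at transvections and their products, because these are the simplest elements of $G$ and their traces are easy to compute: if $t=1+\lambda\, u\otimes\varphi_u$ is a transvection then $\Tr(t)=n$, which is useless on its own, but a product of two transvections with overlapping support will have trace of the form $n-2+(\text{something involving }\lambda\text{ and }f)$. So first I would fix a singular vector $u$ and a vector $y$ with $f(u,y)=1$ (a hyperbolic pair), and for each admissible scalar $\lambda$ consider $t_\lambda := 1+\lambda\, u\otimes\varphi_u$ together with the transvection $s := 1+\mu\, y'\otimes\varphi_{y'}$ for a suitable singular $y'$ in the span of $u,y$, and compute $\Tr(t_\lambda s)$ explicitly using \cref{lem:conj_tr}-style bookkeeping (really just expanding $(1+A)(1+B)$ and taking traces). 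The key point is that this trace will be an affine-linear function of $\lambda$ with nonzero leading coefficient, so as $\lambda$ ranges over the additive group of admissible scalars, the traces range over a coset of that additive group; subtracting two of them gives all of the additive group $\mf$.

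Next I would bootstrap from the additive group to the full field. In the symplectic case $\mf=\FF q$ already, so we are done once we know the traces fill out an additive coset of $\FF q$ of the form $c+\FF q$, but that is just $\FF q$. In the linear case similarly $\mf=\FF q$ and the same computation works with $\varphi_u$ replaced by an arbitrary $\phi$ with $\phi(u)=0$; here one has the extra freedom that $\lambda$ ranges over all of $\FF q^\times$, so the traces of the $t_\lambda s$ already include $n-2+\lambda$ for all $\lambda\ne 0$, and together with $n=\Tr(1)$ we get everything. In the unitary case the subtlety is real: transvection scalars are restricted to the trace-zero hyperplane $\mf$, which spans $\FF q$ over $\FF p$ but not over $\FF{q_0}$, so additively we only directly reach $\mf$, not $\FF q$. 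To close this gap I would multiply three transvections, or use a transvection together with a non-transvection isometry (for instance a diagonal element $\mathrm{diag}(\alpha,\alpha^{-q_0},1,\dots,1)$ scaled to lie in $\SU$, which exists because $\SU(n,q)$ contains a torus), whose trace contributes a term like $\alpha+\alpha^{-q_0}+(n-2)$; combining the $\FF p$-span of $\mf$ with such norm/trace expressions, and using that $n\ge 3$ so there is enough room, one gets all of $\FF q$. Concretely, the cleanest route is probably: the subring of $\FF q$ generated by the traces contains $\mf$ additively, contains $1$ (hence $\FF p$), and is closed under multiplication, so it is a subring containing $\mf\cup\FF p$; since $\mf$ is an $\FF{q_0}$-line not contained in any proper subfield over $\FF p$, the $\FF p$-subalgebra it generates is all of $\FF q$ — this is where the hypothesis $n\ge 3$ for $\SU$ is used to guarantee we can realize a genuine product of transvections with the needed support, rather than being stuck in a too-small subspace.

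The main obstacle I anticipate is precisely the unitary case's passage from the additive group $\mf$ to the multiplicative structure of $\FF q$. One has to be careful that the explicit elements written down actually lie in $\SU(V)$ (determinant $1$, not just in $\GU$), and that their traces, after reduction, genuinely escape every proper subfield. The safest way to handle this is to compute $\Tr(t_\lambda t_\mu t_\nu)$ for three transvections supported on a common totally singular or hyperbolic configuration and extract a term of the form $\lambda\mu^{q_0}$ (or $\lambda\nu$) with a nonzero coefficient: since products $\lambda\mu$ with $\lambda,\mu\in\mf$ generate $\FF q$ as a ring whenever $\mf$ is not contained in a proper subfield, this suffices. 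I would organize the write-up as: (i) reduce to showing the trace-generated subring $R\subseteq\FF q$ equals $\FF q$; (ii) show $1\in R$ so $\FF p\subseteq R$; (iii) show $\mf\subseteq R$ via traces of products of two transvections (linear and symplectic cases finish here, since $\mf=\FF q$); (iv) in the unitary case, show $\lambda\mu\in R$ for a basis-many pairs $\lambda,\mu\in\mf$ via traces of products of three transvections, and conclude $R=\FF q$ because no proper subfield of $\FF q$ contains the $\FF p$-span of $\mf$. Each individual trace computation is a short explicit expansion, so the only real work is choosing the vectors and scalars so that the relevant coefficients are visibly nonzero.
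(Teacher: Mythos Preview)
Your approach for $\SL$ and $\Sp$ is correct but takes a different and longer route than the paper: the paper simply writes down a single element $g\in G$ acting as $\begin{pmatrix}\lambda-(n-2)&-1\\1&0\end{pmatrix}$ on a (non-degenerate) $2$-plane and as the identity on a complement, so that $\Tr(g)=\lambda$ for an arbitrary $\lambda\in\FF q$, and is done in one line. Your transvection-product computation $\Tr(t_\lambda s)=n+\lambda\mu\cdot c$ reaches the same conclusion but with more bookkeeping; what it buys you is that it stays entirely inside the world of transvections, which is thematically pleasant but not needed here.

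In the unitary case your final organization (i)--(iv) has a genuine gap. You claim in (iii) that traces of products of \emph{two} transvections put $\mf$ into $R$, and in (iv) that products $\lambda\mu$ with $\lambda,\mu\in\mf$ then generate $\FF q$. Both statements misfire: for unitary transvections $t_i=1+\lambda_i u_i\otimes\varphi_{u_i}$ one has $\Tr(t_1t_2)=n+\lambda_1\lambda_2\, f(u_1,u_2)f(u_2,u_1)$, and since $\lambda_1\lambda_2\in\FF{q_0}$ (because $(\lambda_1\lambda_2)^{q_0}=(-\lambda_1)(-\lambda_2)=\lambda_1\lambda_2$) and $f(u_1,u_2)f(u_2,u_1)=N(f(u_1,u_2))\in\FF{q_0}$, this trace lies in $\FF{q_0}$, not in $\mf$. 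Likewise your step (iv) only produces $\lambda\mu\in\FF{q_0}$ again, so nothing in (iii)--(iv) as written escapes $\FF{q_0}$. The diagonal element $\mathrm{diag}(\alpha,\alpha^{-q_0},1,\ldots,1)$ you mention has determinant $\alpha^{1-q_0}$, so lying in $\SU$ forces $\alpha\in\FF{q_0}$ and hence trace in $\FF{q_0}$ as well. Your earlier remark about triple products is the right fix: $\Tr(t_1t_2t_3)$ has cubic term $\lambda_1\lambda_2\lambda_3\cdot f(u_1,u_2)f(u_2,u_3)f(u_3,u_1)$ with $\lambda_1\lambda_2\lambda_3\in\mf$, and choosing the $u_i$ so that the form-product lies in $\FF{q_0}^\times$ (which genuinely requires $n\ge 3$) puts that term in $\mf\setminus\{0\}\subset\FF q\setminus\FF{q_0}$; combined with $\FF{q_0}\subseteq R$ from the pairwise products, this gives $R=\FF q$. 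So the strategy can be salvaged, but not along the (iii)--(iv) outline you wrote. The paper avoids all of this by writing down one explicit element of $\SU(3,q_0)\hookrightarrow\SU(n,q_0)$ with trace $-b^{q_0-1}+(n-3)$ for a primitive $b\in\FF q^\times$, and observing that $b^{q_0-1}$ has multiplicative order $q_0+1$, which forces the trace subfield to be all of $\FF q$.
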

\begin{proof}
  Let $V$ be the underlying $n$ dimensional space over $\FF q$, so
  $G$ can be identified with $\SL(V), \Sp(V)$ or $\SU(V)$.

  First let us assume that $G=\SL(V)$ or  $\Sp(V)$.
  Let us choose a decomposition $V=U\oplus U'$ with $\dim(U)=2$.
  In case of $G=\Sp(V)$ let us also assume that $U$ is a
  non-degenerate subspace of $V$ and $U'=U^\perp$. Let $\{x,y\}$ be a basis of
  $U$, symplectic in case $G=\Sp(V)$, and let $g\in \GL(V)$ satisfying
  \[g(x)=y+(\lambda-n-2)x,\ g(y)=-x,\ g(u)=u\textrm{ for all }u\in
    {U'}.\] Then $g$ has trace equal to $\lambda$, and it belongs to
  $G$. 

  Now, let $G=\SU(V)$. Let $U$ be a $3$-dimensional
  non-degenarate subspace of $V$, so $V=U\oplus U^\perp$ and
  $U$ possesses a basis $x,y,z$ with $f(x,x)=f(y,y)=f(x,z)=f(y,z)=0,\
  f(x,y)=f(z,z)=1$. Let $b$ be a generator of the cyclic group
  $\FF q^\times$ and let $g\in \SU(V)$ be defined as
  \[g(x)=b^{-q_0} y,\quad g(y)=b x,\quad g(z)=-b^{q_0-1} z,\quad
  g(u)=u\textrm{ for all }u\in U^\perp.\] Then the trace of $g$ is
  $-b^{q_0-1}+n-3$.  Let $p^r$ be the cardinality of the subfield
  generated by the traces of elements. Note that $b^{q_0-1}\in
  \FF{p^r}$ and the order of $b^{q_0-1}$ is $q_0+1$.  Then $p^r-1$
  is divisible by $q_0+1$, which implies that $p^r=q$.
\end{proof}
\subsection{Theorems of Dickson and Wagner}
The first important result regarding subgroups
generated by transvections is due to Dickson, who gave a full
description of subgroups of $\SL(2,q)$ generated by two non-commuting
transvections.
\begin{thm} [Dickson, see {\cite[Chapter 2,
    Theorem 8.4]{Gorenstein}}] \label{thm:dicksonlemma}
  Assume $r$ is the power of an odd
  prime. Let $\delta$ be a generator of $\mathbb{F}_r$ and set
  $L = \langle \left( \begin{array}{cc} 1 & 1 \\ 0 & 1 \end{array}
  \right), \left( \begin{array}{cc} 1 & 0 \\ \delta & 1 \end{array}
  \right) \rangle$. Then we have either
  \begin{enumerate}
  \item $L=\SL(2,\mathbb{F}_r)$ or
  \item $r=9$, $|Z(L)|=2$, $L/Z(L) \cong A_5$, and $L$ contains a
    subgroup isomorphic to $SL(2,3)$.
  \end{enumerate}
\end{thm}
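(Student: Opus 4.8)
The plan is to derive this from Dickson's classification of the subgroups of $\PSL(2,r)$ (which belongs to the same circle of results; see \cite{Gorenstein}), using the two generators only to decide which item of that list can occur. Write $r=p^{f}$, let $g_1$ and $g_2$ be the two transvections displayed in the statement, regarded as acting on $V=\FF{r}^{2}$, and put $Y=\{g_1,g_2\}$, $L=\fl Y\fr$. Two features of $L$ should be recorded first. Since ${}_{V}Y$ and $Y_{V^{*}}$ clearly span $V$ and $V^{*}$, and $\Gamma(Y)$ is the directed $2$-cycle on $g_1,g_2$ (both its labels are nonzero), \cref{thm:irred-condition} shows that $L$ acts irreducibly on $V$; in particular $L$ is nonabelian. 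Moreover the unique cycle $(g_1,g_2)$ of $\Gamma(Y)$ has weight $\delta$; since $\delta$ generates $\FF{r}$, the field generated by all cycle weights equals $\FF{r}$, so by \cref{rem:weights-gen-def-field} no conjugate of $L$ lies in $\SL(2,L_0)$ for a proper subfield $L_0<\FF{r}$.

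Next I would pass to $\overline L:=LZ/Z\le\PSL(2,r)$, where $Z=Z(\SL(2,r))$. As $r$ is odd, each $g_i$ has order $p$ and this order is inherited by its image in $\overline L$, so $\overline L$ is a nonabelian subgroup of $\PSL(2,r)$ generated by two elements of order $p$. Being irreducible, $L$ is not contained in a Borel subgroup; being generated by $p$-elements, $\overline L$ cannot be a nonabelian dihedral group (its $p$-elements lie in the cyclic normal subgroup), nor $\PGL(2,p^{m})$ or $S_4$ (whose $p$-elements already lie in $\PSL(2,p^{m})$, respectively in $A_4$). Running through Dickson's list, the only surviving possibilities are $\overline L\cong\PSL(2,p^{m})$ for some $m\mid f$, or $\overline L\cong A_4$ (forcing $p=3$), or $\overline L\cong A_5$ (forcing $p\in\{3,5\}$).

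It then remains to finish each case. If $\overline L\cong\PSL(2,p^{m})$, its full preimage in $\SL(2,r)$ is conjugate to $\SL(2,p^{m})$, so $L$ is realizable over $\FF{p^{m}}$, whence $m=f$ by the first paragraph; then $LZ=\SL(2,r)$, and since $\SL(2,r)$ has no subgroup of index $2$ we get $L=\SL(2,r)$, which is (1). If $\overline L\cong A_4\cong\PSL(2,3)$, its preimage in $\SL(2,3^{f})$ is the non-split extension $\SL(2,3)$ (an $A_4$ has no faithful $2$-dimensional representation in characteristic $3$), which is realizable over $\FF{3}$, forcing $r=3$; then, as in the previous case, $L=\SL(2,3)$, again (1). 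If $\overline L\cong A_5$ with $p=5$, then $A_5\cong\PSL(2,5)$ is realizable over $\FF{5}$, forcing $r=5$, and $L=\SL(2,5)=\SL(2,r)$, i.e.\ (1) once more. Finally, if $\overline L\cong A_5$ with $p=3$, its preimage in $\SL(2,3^{f})$ is again a non-split extension, namely $\SL(2,5)$ (an $A_5$ has no faithful $2$-dimensional representation in characteristic different from $2$ and $5$); as $\SL(2,5)$ is perfect, $LZ=\SL(2,5)$ gives $L=\SL(2,5)$, and since the natural $\SL(2,5)$-module has field of definition $\FF{3}(\sqrt 5)=\FF{9}$ in characteristic $3$ (note that $5$ is a non-square modulo $3$), the first paragraph forces $r=9$. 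Thus $|Z(L)|=2$, $L/Z(L)\cong A_5$, and $L=\SL(2,5)$ contains a copy of $\SL(2,3)$ — exactly conclusion (2).

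The routine ingredients are Dickson's subgroup list for $\PSL(2,r)$ and a few standard facts about $\SL(2,3)$ and $\SL(2,5)$; the one step that I expect to require genuine care is the characteristic-$3$ copy of $A_5$. There one must pin its preimage down as $\SL(2,5)$ rather than as the split group $2\times A_5$, and check that the field of definition of the natural $\SL(2,5)$-module is exactly $\FF{9}$. This is what isolates $r=9$ in conclusion (2) and, at the same time, explains why no such exceptional configuration can occur over $\FF{81}$ or any larger field of characteristic $3$.
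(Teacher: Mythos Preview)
The paper does not give its own proof of this statement: it is quoted as a classical result of Dickson with a pointer to \cite[Chapter~2, Theorem~8.4]{Gorenstein}, so there is nothing to compare your argument against. That said, your derivation from Dickson's full subgroup list (\cref{thm:Dicksonlist}) together with the field-of-definition criterion of \cref{rem:weights-gen-def-field} is correct and is in fact the standard way to extract this particular lemma from the list. The key observations --- that $L$ is irreducible with trace field $\FF r$, that the only items of the list compatible with being generated by two $p$-elements are $\PSL(2,p^m)$, $A_4$ (forcing $p=3$) and $A_5$ (forcing $p\in\{3,5\}$), and that each of these preimages in $\SL(2,r)$ is the non-split double cover because $\SL(2,r)$ has $-I$ as its unique involution --- are all sound. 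The one genuinely delicate point, which you correctly isolate, is the characteristic-$3$ copy of $A_5$: here the trace of an order-$5$ element is a root of $x^2+x-1$, whose discriminant $5\equiv 2$ is a non-square in $\FF 3$, so the trace field is exactly $\FF 9$, pinning down $r=9$.

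One remark on logical hygiene: you invoke \cref{thm:irred-condition} and \cref{rem:weights-gen-def-field} from later in the paper, which is harmless since neither depends on \cref{thm:dicksonlemma}, but for a self-contained write-up it would be cleaner to argue irreducibility and the trace field directly (both are immediate from the explicit $2\times 2$ matrices: an $L$-invariant line would be fixed by both transvections, and the trace of $g_1g_2$ is $2+\delta$, which already generates $\FF r$).
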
	
Dickson also gave a full description of subgroups of $\SL(2,q)$
The following may be found in
\cite[p. 213-214]{Huppert} and \cite[p. 285]{Dickson}.

\begin{thm}[Dickson's Theorem]\label{thm:Dicksonlist}
  Let $p$ be a prime and $f$ a positive integer. The subgroups of
  $\PSL(2,p^f)$ are the following.

  \begin{enumerate}
  \item[(i)] Elementary abelian $p$-group.
  \item[(ii)] Cyclic group whose order $z$ divides $(p^{f} \pm 1)/k$,
    where $k = (p^{f}-1,2)$.
  \item[(iii)] $D_{2z}$, where $z$ is as in (ii).
  \item[(iv)] $A_{4}$, where $p > 2$ or $p = 2$ and $f \equiv 0 \pmod 2$.
  \item[(v)] $S_4$, where $p^{2f}-1 \equiv 0 \pmod {16}$.
  \item[(vi)] $A_5$, where $p = 5$ or $p^{2f}-1 \equiv 0 \pmod 5$.
  \item[(vii)] The semidirect product of an elementary abelian group of
    order $p^{m}$ and a cyclic group of order $t$, where
    $t \mid p^{m}-1$ and $t \mid p^{f}-1$.
  \item[(viii)] $\PSL(2,p^{m})$, where $m \mid f$ and
    $\PGL(2,p^{m})$, where $2m \mid f$.
  \end{enumerate}
\end{thm}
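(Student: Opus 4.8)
The plan is to classify a subgroup $H\le G:=\PSL(2,q)$, with $q=p^f$, by means of the natural $2$-transitive action of $G$ on the projective line $\mathbb P^1(\FF q)$ of $q+1$ points. The relevant structural facts are: a point stabiliser is a Borel subgroup $B\cong[p^f]:C_{(q-1)/k}$ with $k=(p^f-1,2)$; a two-point stabiliser is a split maximal torus, cyclic of order $(q-1)/k$; a non-split maximal torus is cyclic of order $(q+1)/k$; and the torus normalisers are dihedral of orders $2(q-1)/k$ and $2(q+1)/k$. The analysis splits according to a Sylow $p$-subgroup $P$ of $H$ into the cases $P=1$, $P\ne1$ with $P\trianglelefteq H$, and $P\ne1$ with $P\not\trianglelefteq H$. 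In the second case, every nontrivial $p$-element of $G$ is the image of a transvection and fixes a single point of $\mathbb P^1$, so $P$ fixes a single point and, being normal in $H$, forces $H$ to fix it as well; thus $H\le B$, and the elementary analysis of subgroups of $B$ — $H\cap[p^f]$ is elementary abelian of some order $p^m$, the quotient is cyclic of some order $t\mid(q-1)/k$, and faithfulness of the torus action forces $t\mid p^m-1$ — yields items (i) (when $t=1$, noting Sylow $p$-subgroups of $G$ are elementary abelian) and (vii).

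For the case $P=1$, so that $H$ is a $p'$-group, I would pass to characteristic zero: the preimage $\widetilde H$ of $H$ in $\SL(2,\overline{\FF p})$ has order prime to $p$, hence its $2$-dimensional module lifts to a faithful $\mathbb C$-representation (standard modular-representation theory of $p'$-groups), giving $H\hookrightarrow\PGL(2,\mathbb C)$. The finite subgroups of $\PGL(2,\mathbb C)$ are classically the cyclic groups, the dihedral groups, $A_4$, $S_4$ and $A_5$, and it remains to decide, for each, precisely which $q=p^f$ admit it inside $\PSL(2,q)$. A cyclic group of order $z$ embeds iff it lies in a maximal torus, i.e.\ iff $z\mid(p^f-1)/k$ or $z\mid(p^f+1)/k$, which is item (ii); a dihedral group embeds iff it lies in a torus normaliser, item (iii); and for $A_4$, $S_4$, $A_5$ one determines when the required cyclic subgroups (of orders $3$, $4$, $5$) and their dihedral overgroups fit, producing the congruence conditions of (iv)--(vi). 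The small exceptional isomorphisms $\PSL(2,3)\cong A_4$, $\PGL(2,3)\cong S_4$ and $\PSL(2,4)\cong\PSL(2,5)\cong A_5$ account for these groups also occurring under (vii) or (viii); the list is not claimed to consist of disjoint cases.

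The substantive case is $P\ne1$ with $P\not\trianglelefteq H$, where I claim $H\cong\PSL(2,p^m)$ or $\PGL(2,p^m)$. Then $H$ has two distinct Sylow $p$-subgroups $P_1,P_2$ fixing distinct points of $\mathbb P^1$; choosing nontrivial $t_i\in P_i$ and lifting to $\SL(2,q)$ gives two non-commuting transvections, which after conjugation to standard upper/lower unitriangular form with parameter $c$ generate, by \cref{thm:dicksonlemma}, the group $\SL(2,\FF r)$ with $\FF r=\FF p(c)\le\FF q$ — so $H$ contains $\PSL(2,p^m)$ with $m\mid f$ — unless the exceptional configuration $r=9$ occurs, in which case $\langle t_1,t_2\rangle/Z\cong A_5$ and this is absorbed into item (vi). Setting $N$ equal to the subgroup generated by all Sylow $p$-subgroups of $H$, one checks that $N\trianglelefteq H$ and $N\cong\PSL(2,p^{m'})$ for a suitable $m'\mid f$ (outside the $A_5$ exception), so $H\le N_G(N)$; and since field automorphisms of $\PSL(2,p^{m'})$ are outer, $N_{\PSL(2,p^f)}(\PSL(2,p^{m'}))$ equals $\PGL(2,p^{m'})$ when $2m'\mid f$ and $\PSL(2,p^{m'})$ otherwise, forcing $H=\PSL(2,p^{m'})$ or $\PGL(2,p^{m'})$ — item (viii).

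The main obstacle is this last case. Getting from ``two non-commuting transvections'' to ``$\langle t_1,t_2\rangle$ contains all of $\PSL(2,p^m)$'' demands care in small characteristic and around the $r=9$ exception of \cref{thm:dicksonlemma}, as does assembling the full $N$ from its Sylow $p$-subgroups; and the normaliser computation — in particular that $\PGL(2,p^{m'})$ appears exactly when $2m'\mid f$ — rests on tracking how the $\PSL$-versus-$\PGL$ discrepancy (square classes of determinants) behaves under field extension. The characteristic-zero lifting underpinning the $p'$-case, although routine, should be spelled out rather than merely cited, and the realisability conditions in (iv)--(vi) require explicit but elementary computations with element orders in $\PSL(2,q)$.
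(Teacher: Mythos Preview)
The paper does not prove this theorem at all: it is stated as a classical result of Dickson, with references to \cite[p.~213--214]{Huppert} and \cite[p.~285]{Dickson}, and used as a black box (notably in the proof of \cref{thm:Dickson_for_subsets}). So there is no paper-proof to compare against.

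Your outline follows the standard modern route to Dickson's classification and is broadly sound. The trichotomy on the Sylow $p$-subgroup $P$ of $H$ (trivial; nontrivial and normal; nontrivial and non-normal) is the right organising principle; the $p'$-case via lifting to $\PGL(2,\mathbb C)$ is correct; and the case $P\trianglelefteq H$ landing inside a Borel is clean. Two points deserve attention. First, your appeal to \cref{thm:dicksonlemma} in the third case is stated in the paper only for odd $p$, so characteristic~$2$ needs separate handling (it is in fact easier, there being no $r=9$ exception). Second --- and you flag this yourself --- the passage from ``$\langle t_1,t_2\rangle$ contains $\PSL(2,p^m)$'' to ``the subgroup $N$ generated by all Sylow $p$-subgroups of $H$ is $\PSL(2,p^{m'})$'' is where the real work lies: one must show that adjoining further $p$-elements to a copy of $\PSL(2,p^m)$ inside $G$ can only produce a larger $\PSL(2,p^{m''})$, and this is where Dickson's original argument spends most of its effort. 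Your sketch is an honest outline rather than a complete proof; but since the paper merely cites the result, you have already gone further than the authors did here.
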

Let again $V$ be an $n$-dimensional vector space over $\FF q$. In most
of our cases $n$ will be unbounded. We will
be interested in subspaces $U$ of $V$ of bounded dimension. Let $U$ be
a subspace of $V$ of dimension $k$. Let $W$ be another subspace of $V$
such that $V = U \oplus W$. In particular, the dimension of $W$ is
$n-k$. Let $1_W$ denote the identity map on $W$. In the special case
when $W = V$ we denote $1_{W}$ by $1$.

Let $H$ be a subgroup of $\SL(U)$. Let us denote
the subgroup
\[
  \{g\in \SL(V)\,|\, g(U)=U,\,g(W)=W,\,g_U\in
  H,\,g_W=1_W\}\leq \SL(V)
\]
by $H \oplus 1_W$. Usually we use this construction for $H=SL(U)$, or
$Sp(U)$ or $SU(U)$.

In a similar way, for a matrix group
$M \leq \SL(k,q)$, let $M \oplus 1_{n-k}$ denote the subgroup
\[
  \Big\{m \oplus 1_{n-k}:=\begin{pmatrix}m&0\\0&1_{n-k}\end{pmatrix}
  \,\Big|\,m\in M\Big\},
\]
where $1_{n-k}$ denotes the identity matrix of size
$(n-k)\times(n-k)$. Let $H$ be a subgroup of $\SL(V)$ and let
$M$ be a subgroup of $\SL(k,q)$. By writing
$H \simeq M \oplus 1_{n-k}$ it is meant that there is a direct sum
decomposition $V = U\oplus W$ with $\dim(U)=k$ and a subgroup
$H_0 \simeq H$ of $\SL(U)$ such that $H = H_0 \oplus 1_W$ and the
matrix form of $H_0$ is equal to $M$ in a suitable basis of $U$.  In
the symplectic and unitary cases when we write
$H \simeq M\oplus 1_{n-k}$ we tacitly assume that the underlying
decomposition $V = U\oplus W$ is orthogonal with respect to $f$, that
is, $f_U$ is non-degenerate and $W = U^\perp$.  In these cases $H_0$
will be a subgroup of $\Sp(U)$ or $\SU(U)$.

Now, we give a partial generalisation of \cref{thm:dicksonlemma}.
\begin{thm}\label{thm:Dickson_for_subsets}
  Let $(s,t)\in \E$ be a two-way directed edge and let
  $\textrm{K}, \Lambda \subseteq \FF{q}^\times$ containing $1$. Set
  $\delta:=w(s,t)$ and $M:=\FF p(\textrm{K},\Lambda,\delta)$.  Then
  $\fl s^{\textrm{K}},t^\Lambda\fr\simeq \SL(2,M)\oplus 1_{n-2}$,
  unless $p=2$
  or
  $M=\FF 9$.
\end{thm}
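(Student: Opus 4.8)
The plan is to reduce \cref{thm:Dickson_for_subsets} to \cref{thm:dicksonlemma} by showing that the subgroup $H := \langle s^{\textrm{K}}, t^{\Lambda}\rangle$ acts trivially on a complement of a $2$-dimensional subspace $U$ and that its action on $U$ is conjugate to the configuration in Dickson's lemma over the field $M$. First I would observe that $s = 1 + a_s\otimes\phi_s$ and $t = 1 + a_t\otimes\phi_t$, and since $(s,t)$ is a two-way directed edge we have $\phi_t(a_s)\neq 0$ and $\phi_s(a_t)\neq 0$; in particular $a_s,a_t$ are linearly independent and $\phi_s,\phi_t$ are linearly independent. Set $U := \langle a_s, a_t\rangle$. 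Every generator of $H$ — each $s^\kappa = 1 + \kappa a_s\otimes\phi_s$ with $\kappa\in\textrm{K}$ and each $t^\lambda = 1+\lambda a_t\otimes\phi_t$ with $\lambda\in\Lambda$ — moves vectors only within $U$, and fixes pointwise the hyperplane $\ker\phi_s\cap\ker\phi_t$, which meets $U$ in $\{0\}$ (since $\phi_s|_U,\phi_t|_U$ are independent on the $2$-space $U$). Hence $W := \ker\phi_s\cap\ker\phi_t$ is an $H$-invariant complement to $U$ on which $H$ acts trivially, giving a decomposition $V = U\oplus W$ with $H = H_0\oplus 1_W$ for $H_0 = H|_U \leq \SL(U)$.

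Next I would pick a basis of $U$ realizing the Dickson normal form. Rescaling $a_t$ and $\phi_t$ we may assume $\phi_t(a_s) = 1$; the remaining freedom lets us arrange $\phi_s(a_t)$ to equal $\delta = w(s,t) = \phi_s(a_t)\phi_t(a_s) = l(s,t)l(t,s)$, which is basis-independent. Choosing $\{a_s, a_t\}$ as the basis of $U$, the restriction $s|_U$ becomes $\left(\begin{smallmatrix}1&\delta\\0&1\end{smallmatrix}\right)$ and $t|_U$ becomes $\left(\begin{smallmatrix}1&0\\1&1\end{smallmatrix}\right)$ (or a transpose-type pair — the exact placement only affects the conjugating matrix, not the generated group). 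Then $s^\kappa|_U = \left(\begin{smallmatrix}1&\kappa\delta\\0&1\end{smallmatrix}\right)$ and $t^\lambda|_U = \left(\begin{smallmatrix}1&0\\\lambda&1\end{smallmatrix}\right)$, so $H_0$ is generated by the upper-unitriangular group over the additive span of $\textrm{K}\delta$ together with the lower-unitriangular group over the additive span of $\Lambda$. Since $1\in\textrm{K}$ and $1\in\Lambda$, both include the prime-field translations, and $\delta\in M$, so $H_0 \leq \SL(2,M)$; conversely, the commutators and products of these elementary matrices generate all of $\SL(2,M)$ — this is where I invoke \cref{thm:dicksonlemma} applied over the field $M$ (more precisely, the standard fact that $\left(\begin{smallmatrix}1&1\\0&1\end{smallmatrix}\right)$ and $\left(\begin{smallmatrix}1&0\\\delta&1\end{smallmatrix}\right)$ generate $\SL(2,M)$ when $M = \FF p(\delta)$ and $M \neq \FF 9$, $p\neq 2$), together with the observation that adjoining extra generators $s^\kappa, t^\lambda$ with $\kappa,\lambda\in M^\times$ keeps us inside $\SL(2,M)$. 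Hence $H_0 = \SL(2,M)$, i.e. $H\simeq\SL(2,M)\oplus 1_{n-2}$.

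The main obstacle is the last step: \cref{thm:dicksonlemma} is stated only for the two specific generators $\left(\begin{smallmatrix}1&1\\0&1\end{smallmatrix}\right)$, $\left(\begin{smallmatrix}1&0\\\delta&1\end{smallmatrix}\right)$ with $\delta$ a \emph{generator} of $\FF r^\times$, whereas here $\delta$ only generates the field $M$ and we have a whole family of unitriangular generators indexed by $\textrm{K}$ and $\Lambda$. To bridge this, I would argue that $H_0$ contains $\SL(2,\FF p(\delta')) = \SL(2,M)$ for \emph{some} field generator $\delta'$ of $M^\times$: indeed $H_0$ contains $\left(\begin{smallmatrix}1&\delta\\0&1\end{smallmatrix}\right)$ and $\left(\begin{smallmatrix}1&0\\1&1\end{smallmatrix}\right)$, and conjugating the latter by the diagonal... — but diagonals need not be available yet, so instead I would use that the subgroup generated by these two already contains $\left(\begin{smallmatrix}1&0\\\delta^2\\ \end{smallmatrix}\right)$-type elements via the $\SL_2$ relations, or more robustly, cite the known refinement of Dickson's classification (via \cref{thm:Dicksonlist}): a subgroup of $\SL(2,M)$ containing two non-commuting transvections with non-commuting directions and whose trace field is all of $M$ must be $\SL(2,M)$ itself unless $(p,|M|)$ lies in the small exceptional list, which over odd $p$ forces $|M|\in\{9\}$ among the cases where the subgroup is genuinely smaller — exactly the exclusion $M=\FF 9$ in the statement. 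The delicate point is checking that the trace field of $H_0$ is indeed $M$ and not a proper subfield: by \cref{rem:weights-gen-def-field} this field is generated by the cycle weights of $\Gamma$ restricted to $\{s^\kappa : \kappa\in\textrm K\}\cup\{t^\lambda:\lambda\in\Lambda\}$, and the relevant weights are $w(s^\kappa, t^\lambda) = \kappa\lambda\delta$ for $\kappa\in\textrm K,\lambda\in\Lambda$, together with $1$; since $1\in\textrm K\cap\Lambda$ this set generates exactly $\FF p(\textrm K,\Lambda,\delta) = M$. Verifying that for odd $p$ the only obstruction is $M = \FF 9$ — i.e. that $S_4, A_5, A_4$ cannot occur as $H_0$ except in that single case because they would force $M$ too small or lack enough transvections — is the computational heart of the argument and occupies the bulk of the proof.
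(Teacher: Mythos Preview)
Your proposal is correct and follows essentially the same route as the paper: the same direct-sum decomposition $V=\langle a_s,a_t\rangle\oplus(\ker\phi_s\cap\ker\phi_t)$, the same normalization yielding unitriangular matrices, the appeal to Dickson's full subgroup list (\cref{thm:Dicksonlist}) rather than \cref{thm:dicksonlemma} alone, and the identification of the defining field $L=M$ via the cycle-weight characterization (\cref{rem:weights-gen-def-field}), with the weights $w(s^\kappa,t^\lambda)=\kappa\lambda\delta$ generating exactly $M$. The only place where the paper is more explicit is the elimination of $A_4$, $S_4$, $A_5$: there one argues that these force $\textrm K\Lambda\subseteq\FF p$ and then compares $|\langle s,t\rangle|=|\SL(2,\FF p(\delta))|$ (via \cref{thm:dicksonlemma}) against $2|A_5|$ to pin down $\delta\in\FF 9$ when $p=3$ (resp.\ $\delta\in\FF 5$ when $p=5$), which is precisely the ``computational heart'' you flagged.
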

\begin{proof}
  Write the transvections as $s=1+v \otimes \phi$ and
  $t=1+w \otimes \psi$.  Since $(s,t)$ is a two-way edge,
  $\phi(w) \neq 0$ and $\psi(v) \neq 0$. Therefore
  \[V = \langle v,w \rangle \oplus (\ker(\phi) \cap \ker(\psi)).\]
  Choose a basis of $V$ whose first two vectors are $\phi(w)v,w$ and
  the other vectors form a basis of the $n-2$ dimensional space
  $\ker(\phi) \cap \ker(\psi)$. Then we get
  $H=\langle s^K,t^{\Lambda} \rangle\simeq H_0\oplus 1_{n-2}$ for some
  $H_0\leq SL(2,q)$. Therefore we may assume that $n=2$,
  $V=\langle v,w \rangle$ and $H\leq SL(2,q)$.  Furthermore, in the
  basis $\{\phi(w) v,w\}$ the transvections $s^{\kappa}$,
  $t^{\lambda}$, for $\kappa \in K$, $\lambda \in \Lambda$, have
  matrix form
  \[
    \left(\begin{array}{cc} 1 & \kappa \\ 0 & 1 \end{array} \right),
    \hspace{1cm} \left(\begin{array}{cc} 1 & 0 \\ \delta \lambda &
        1 \end{array} \right)
  \]
  Now, the image of $H$
  in $PSL(2,q)$ (denoted by $\bar{H}$) is one of the groups appearing in
  \cref{thm:Dicksonlist}. $\bar{H}$ has two non-commuting $p$-subgroups, namely
  the images of $\fl s^{\textrm{K}}\fr$ and $\fl t^\Lambda\fr$.
  Thus, $\bar H$ cannot be of type (i), (ii) or (vii).
  For the remainder of the proof, let us assume that $p \neq 2$.
  Then $\bar H$ cannot be of type (iii).
  Now, let us assume that $p=3$ and $\bar H$ is any of type (iv) or (v) or (vi),
  then we must have $\textrm{K}\Lambda\subset \FF 3$.
  If $\delta\notin \FF 9$, then by \cref{thm:dicksonlemma} we have
  \[
    |H|=|\fl s,t\fr|=|SL(2,\FF 3(\delta)|\geq |SL(2,27)|>2|A_5|\geq |H|,
  \]
  a contradiction.
  Thus $\delta\in\FF 9$, which means that either $M=\FF 9$ or $M=\FF 3$.
  In the latter case we get that $H=SL(2,3)$ by \cref{thm:dicksonlemma}.
  Now let us assume that $\bar H$ is of type (vi) and $p=5$.
  Again, we must have $\textrm{K}\Lambda\subset \FF 5$.
  Similarly as before, if $\delta\notin \FF 5$, then
  $|H|=|\fl s,t\fr|=|SL(2,\FF 3(\delta)|>2|A_5|\geq |H|$, a contradiction.

  Thus, we get that $\bar{H}$ is of type (viii), so $H=SL(2,L)$ for
  some subfield $L$ of $\FF q$. Since $M$ equals to the subfield
  generated by all the weights of
  $\Gamma(\{s^{\textrm{K}},t^\Lambda\})$, we get that $L=M$ by
  \cref{sec:deffield}.
\end{proof}
The analogous question which irreducible subgroups of $SL(n,q)$ are generated by
transvections when $n\geq 3$ was solved by Wagner:
\begin{thm}[{\cite[Theorem 1.1]{Wagner}}]\label{thm:Wagner}
  Let $V$ be vector space over $\FF q$ of dimension
  $n\geq 3$ and let $H$ be a subgroup of $SL(V)$.
  Let us assume that $H$ has the following properties.
  \begin{enumerate}
  \item\label{wagneritem1} $H$ is generated by transvections,
  \item\label{wagneritem2} $H$ acts irreducibly on $V$,
  \item\label{wagneritem3} $H$ contains a transvection subgroup of
    order larger than $2$.
  \end{enumerate}
  Then $H$ is isomorphic to one of $\SL(n,L)$, $\SU(n,L)$ or $\Sp(n,L)$ for
  some subfield $L$ of $\FF q$.
\end{thm}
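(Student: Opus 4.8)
The plan is to give a proof of \cref{thm:Wagner} that is essentially a survey of Wagner's classification, reduced to the pieces we actually need, but since the statement we must prove is exactly Wagner's theorem as cited, the ``proof'' here is really an explanation of how the classification goes and why the three hypotheses pin down the conclusion. First I would recall that a subgroup generated by transvections is, by a theorem of McLaughlin (and earlier Kantor), extremely restricted once irreducibility is imposed; the key structural input is that the transvection subgroups of $H$, being conjugate elementary abelian $p$-subgroups each centralized by a hyperplane, behave like a ``root system'' inside $H$. So the first step is to set up the poset/graph of transvection subgroups of $H$ and observe that hypothesis \eqref{wagneritem3}, that some transvection subgroup $T$ has $|T|>2$, forces $p$ to be odd or $q$ to be a proper prime power, and in either case rules out the small exceptional configurations (e.g. those occurring for $\operatorname{SL}$ over $\FF 2$, or the $6$-dimensional orthogonal-type examples over $\FF 3$ that arise when all transvection subgroups have order $2$).

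Second, I would invoke the irreducibility hypothesis \eqref{wagneritem2} together with \cref{thm:irred-condition}: the $V$-part and $V^*$-part of a generating set of transvections span $V$ and $V^*$, and the transvection graph is strongly connected. This lets one run a ``connectedness'' argument showing that all the transvection subgroups of $H$ are linked through two-way edges into a single component, and then \cref{thm:Dickson_for_subsets} applies to each such pair $(s,t)$: every two-generated piece $\langle s^K, t^\Lambda\rangle$ is $\SL(2,M)$ for a common subfield $M$ (this is where we would need the hypotheses $p\ne 2$ and $M\ne \FF 9$, but note the statement of \cref{thm:Wagner} itself has no such restriction — Wagner's original argument handles $\FF 9$ and characteristic $2$ directly via a separate case analysis, so here I would simply cite Wagner for those residual cases rather than re-derive them). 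The subfield generated by all cycle weights of the transvection graph is, by \cref{sec:deffield} and \cref{rem:weights-gen-def-field}, the minimal field $L$ of definition of $H$, so after conjugating we may assume $H\le \SL(n,L)$ and that the transvection graph's weights generate $L$.

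Third, with $H\le \SL(n,L)$ irreducible, generated by transvections, and containing a transvection subgroup of order $>2$, one shows $H$ preserves at most one classical form. The mechanism: fix a transvection $t=1+\lambda u\otimes\phi$ in $H$ with $|t^L|>2$; the centralizer structure and the commutator relations among transvection subgroups force the directions of the transvections to lie in a subspace on which a (possibly zero) bilinear or sesquilinear form is defined, and the form is recovered from the labels $l(s,t)=\phi_t(u_s)$ up to scalar. Either the form is identically zero — in which case the directions and the hyperplanes are ``in general position'' and a transitivity argument on pairs of a point and a non-incident hyperplane shows $H=\SL(n,L)$ — or the form is non-degenerate on $V$, necessarily alternating (symplectic) or, after descending $L$ to an index-two subfield $L_0$, conjugate-symmetric (unitary), and then the transvections generate the full isometry group $\Sp(n,L)$ or $\SU(n,L)$. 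The main obstacle is precisely this last dichotomy: proving that a transvection-generated irreducible group is forced to be the \emph{full} classical group rather than some proper transvection-generated subgroup. The standard resolution is the fact — due to Dickson in rank $2$ and extended by McLaughlin — that for $n\ge 3$ the only irreducible subgroups of $\Sp(n,L)$ or $\SU(n,L)$ generated by their transvections (long root elements) are the whole groups, which one bootstraps from the rank-$2$ case \cref{thm:dicksonlemma} by a connectedness-and-induction argument along the transvection graph. Since a self-contained reproduction of McLaughlin's and Wagner's classifications is far beyond the scope here, I would present the statement as quoted from \cite{Wagner} and limit the ``proof'' to indicating how the three hypotheses match the hypotheses in Wagner's paper and how the conclusion feeds into our later use of it.
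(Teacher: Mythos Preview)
The paper does not prove \cref{thm:Wagner} at all: it is stated as a citation of \cite[Theorem~1.1]{Wagner} and used as a black box, with only a remark afterward explaining why hypothesis~\eqref{wagneritem3} is automatic in odd characteristic and how Wagner's original formulation for elations in $\PSL(V)$ translates to transvections in $\SL(V)$. Your proposal ultimately arrives at the same conclusion in its final sentence, but everything before that --- the sketch via root-subgroup structure, McLaughlin's work, the connectedness argument through \cref{thm:Dickson_for_subsets}, and the form-recovery dichotomy --- is extra material that the paper does not attempt. If you want to match the paper, simply state the theorem with the citation and move on; your outline is interesting context but is neither needed nor present in the source.

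One caution about the sketch itself: invoking \cref{thm:Dickson_for_subsets} as a building block would not cleanly recover Wagner's full statement, since that lemma explicitly excludes $p=2$ and $M=\FF 9$, whereas \cref{thm:Wagner} carries no such restriction. You acknowledge this and defer those cases back to Wagner, but at that point the argument is no longer self-contained and the citation is doing all the work anyway --- which is exactly the paper's stance.
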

\begin{rem}\leavevmode
  \begin{enumerate}
  \item Since the order of any transvection is $p>2$ by our
    assumption, property \eqref{wagneritem3} will always follow
    automatically.
  \item Originally, in \cite{Wagner} this theorem was formed for
    subgroups of $\PSL(V)$ generated by elations.  (An elation is just
    the image of a transvection under the natural map
    $\SL(V)\to\PSL(V)$.) But it is easy to see that Wagner's theorem
    is translated into the above theorem when we consider subgroups of
    $\SL(V)$.
  \end{enumerate}
\end{rem}
\subsection{Determining the type of $\mathbf{H}$}
As we have seen, the weights of the cycles determine the smallest
subfield $L$ of $\FF q$ such that an irreducible subgroup of $\SL(n,q)$
generated by a set of transvections is realisable over $L$.
Now, we introduce two other parameters of cycles of $\Gamma(\mt)$.
With their help, one can determine
whether a set of transvections generate $\SL(V), \SU(V)$ or $\Sp(V)$.
\begin{defin}\label{defin:ds-du}
  For any $r_1,r_2,\ldots,r_k\in \mt$ we define
  \begin{align*}
    d_s(r_1,r_2,\ldots,r_k)&:=w(r_1,r_2,\ldots,r_k)+(-1)^{k+1}
                             w(r_k,r_{k-1},\ldots,r_1),\\
    d_u(r_1,r_2,\ldots,r_k)&:=w(r_1,r_2,\ldots,r_k)+(-1)^{k+1}
                             w(r_k,r_{k-1},\ldots,r_1)^{\sqrt q}.
  \end{align*}
  ($d_u$ is only defined if $q$ is a perfect square). We say that a cycle
  $(r_1,\ldots,r_k)\subset \mt$ is symplectic (or singular) if
  $d_s(r_1,\ldots,r_k)=0$. Similarly, we say that
  $(r_1,\ldots,r_k)\subset \mt$ is unitary if
  $d_u(r_1,r_2,\ldots,r_k)=0$.
\end{defin}
\begin{rem}\label{rem:ds-du}\leavevmode
  \begin{thmlist}
  \item In \cite{HZ}, the notation $\det_c$ was used instead of $d_s$,
    but we changed it to reflect its connection with the symplectic
    group.
  \item By the definition of $d_s$ and $d_u$, if $(s,t)$ is a
    $2$-cycle, that is, a two-way directed edge, then $(s,t)$ is always
    symplectic, while it is unitary if and only if $w(s,t)\in \FF {\sqrt q}$.
    \label[rem]{rem:ds-du-2}
  \item By an abuse of notation, we sometimes allow ourselves to say
    that an arbitrary tuple $(r_1,\ldots,r_k)$ is symplectic/unitary
    even if it is not a cycle (in neither direction).
  \item Clearly, a one-way directed cycle (i.e a cycle, which is not a
    cycle in the reverse direction) is never symplectic or unitary,
    while a $k$-tuple which is not a cycle (in both of the two
    possible directions) is both symplectic and unitary.
    \label[rem]{rem:ds-du-4}
  \end{thmlist}
\end{rem}
\begin{thm} \label{thm:structure-from-cycles}
  Let us assume that $n\geq 3$ and
  let $Z\subset \mt\subset \SL(n,q)$ be a set of transvections such that
  $Z$ generates an irreducible subgroup of $\SL(n,q)$ and the weights
  of all cycles of $\Gamma(Z)$ generate $\FF q$. Then
  $\langle Z\rangle$ is one of $\SL(n,q),\ \SU(n,q)$ or $\Sp(n,q)$ and we
  have
  \begin{align*}
    \langle Z\rangle=\Sp(n,q)&\iff\textrm{ every cycle of }\Gamma(Z)
                              \textrm{ is symplectic},\\
    \langle Z\rangle=\SU(n,q)&\iff\textrm{ every cycle of }\Gamma(Z)
                              \textrm{ is unitary},\\
    \langle Z\rangle=\SL(n,q)&\iff \Gamma(Z)\textrm{ contains both
                               a non-symplectic and a non-unitary cycle}.
  \end{align*}
\end{thm}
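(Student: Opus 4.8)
The plan is to reduce Theorem~\ref{thm:structure-from-cycles} to Wagner's theorem (\cref{thm:Wagner}) together with the trace/field computations already available, and then to identify which of the three Wagner outcomes occurs by reading off the symplectic/unitary data of the cycles. By \eqref{wagneritem1}, \eqref{wagneritem2} and the remark following \cref{thm:Wagner} (every transvection has order $p>2$, so \eqref{wagneritem3} is automatic), $H:=\langle Z\rangle$ is isomorphic to one of $\SL(n,L)$, $\Sp(n,L)$ or $\SU(n,L)$ for some subfield $L\le\FF q$. By \cref{rem:weights-gen-def-field}, the smallest such $L$ is the field generated by the weights of all cycles in $\Gamma(Z)$, which by hypothesis is $\FF q$; moreover in the unitary case $\SU(n,L)$ is realisable over $L$ but \emph{not} over any proper subfield, and the conjugating element can be taken inside $\SL(n,q)$. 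Hence $H$ is (up to conjugacy in $\GL(n,q)$, in fact in $\SL(n,q)$) one of $\SL(n,q)$, $\Sp(n,q)$, $\SU(n,q)$, and it remains to characterise the three cases by the cycle invariants $d_s$, $d_u$.

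First I would record the key computation: if $H\le\SL(V)$ preserves a non-degenerate bilinear or sesquilinear form $f$, and $r_i=1+a_i\otimes\phi_i\in Z$ are transvections with $\phi_i=\lambda_i\varphi_{a_i}$ (as forced by the discussion in \S2.3 for elements of $\Sp(V)$ or $\SU(V)$), then the labels satisfy $l(r_i,r_j)=\phi_j(a_i)=\lambda_j f(a_j,a_i)$, whereas $l(r_j,r_i)=\lambda_i f(a_i,a_j)=\pm\lambda_i\overline{f(a_j,a_i)}$ depending on whether $f$ is symplectic (sign $-$, no conjugation) or hermitian (conjugation $x\mapsto x^{\sqrt q}$, sign $+$). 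Multiplying these around a $k$-cycle $(r_1,\dots,r_k)$ and comparing $w(r_1,\dots,r_k)$ with $w(r_k,\dots,r_1)$, the $\lambda_i$ and $f(a_i,a_j)$ factors pair up so that $w(r_k,\dots,r_1)=(-1)^k\,w(r_1,\dots,r_k)$ in the symplectic case and $w(r_k,\dots,r_1)=(-1)^k\,w(r_1,\dots,r_k)^{\sqrt q}$ in the unitary case; equivalently $d_s(r_1,\dots,r_k)=0$ for all cycles when $H=\Sp(n,q)$, and $d_u(r_1,\dots,r_k)=0$ for all cycles when $H=\SU(n,q)$. This proves the ``$\Longrightarrow$'' direction of the first two equivalences, and dualises (every non-symplectic cycle witnesses $H\ne\Sp$, every non-unitary cycle witnesses $H\ne\SU$) to give the ``$\Longleftarrow$'' direction of the third.

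For the converses of the first two equivalences I would argue by elimination using the trichotomy. Suppose every cycle of $\Gamma(Z)$ is symplectic. We already know $H$ is one of the three groups over $\FF q$; if $H=\SL(n,q)$ then $H$ preserves no form, yet one can exhibit a cycle — e.g.\ a two-way edge $(s,t)$, which always exists since $\Gamma(Z)$ is strongly connected with $|Z|\ge 2$ and in fact any cycle works — whose $d_s$ is forced to be nonzero: here the computation above runs in reverse, $d_s(r_1,\dots,r_k)=0$ for \emph{all} cycles forces the bilinear form $f(a_i,a_j):=l(r_i,r_j)/\lambda_j$ (with $\lambda_j$ recovered from a fixed base cycle through $r_j$) to be well-defined, alternating and $H$-invariant, contradicting $H=\SL(n,q)$; and if $H=\SU(n,q)$ one gets a cycle with $w\notin\FF{\sqrt q}$ (again a two-way edge $(s,t)$ with $w(s,t)\notin\FF{\sqrt q}$ exists, since otherwise all weights of $2$-cycles — hence all weights, the long cycles decomposing accordingly — would lie in $\FF{\sqrt q}$, contradicting that the weights generate $\FF q$), and by \cref{rem:ds-du-2} such a cycle is symplectic but, one checks via the explicit form of the hermitian labels, has $d_s\ne 0$ — contradiction. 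So $H=\Sp(n,q)$. The unitary converse is symmetric, exchanging the roles of $d_s$ and $d_u$ and using \cref{rem:ds-du-2} in the form ``a two-way edge is unitary iff its weight lies in $\FF{\sqrt q}$''. Finally, if $\Gamma(Z)$ contains both a non-symplectic and a non-unitary cycle, then $H\ne\Sp(n,q)$ and $H\ne\SU(n,q)$ by the contrapositives just proved, so $H=\SL(n,q)$; combined with the forward directions this yields all three biconditionals and the trichotomy simultaneously.

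I expect the main obstacle to be the bookkeeping in the reverse implications: turning ``all cycles are symplectic/unitary'' into an actual invariant form on $V$. The clean way is to fix a spanning tree of the strongly connected graph $\Gamma(Z)$ rooted at some $r_1$, use it to normalise the representatives $a_i,\phi_i$ (i.e.\ rescale so that tree-edge labels take prescribed values), and then check that the remaining cycle conditions $d_s=0$ (resp.\ $d_u=0$) are exactly what is needed for the assignment $f(a_i,a_j):=\phi_j(a_i)$ to extend to a non-degenerate alternating (resp.\ conjugate-symmetric) $H$-invariant form; the irreducibility of $H$ guarantees uniqueness of such a form up to scalar and hence that the extension is forced and consistent. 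Once this is set up, the identification of $H$ with $\Sp(n,q)$ or $\SU(n,q)$ (rather than a proper subfield subgroup) is immediate from the weight-field hypothesis via \cref{rem:weights-gen-def-field}.
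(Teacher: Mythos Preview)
Your forward directions (if $H=\Sp$ or $\SU$ then every cycle is symplectic resp.\ unitary) and the Wagner-based trichotomy match the paper exactly. For the symplectic converse the paper simply cites \cite[Corollary~5.3]{HZ}, which is precisely the spanning-tree form-construction you sketch, so there you are re-deriving a cited result.

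There is a small but real error in your elimination step. When ruling out $H=\SU(n,q)$ under ``all cycles symplectic'' you assert that a two-way edge $(s,t)$ with $w(s,t)\notin\FF{\sqrt q}$ exists, because otherwise ``all weights of $2$-cycles --- hence all weights, the long cycles decomposing accordingly --- would lie in $\FF{\sqrt q}$''. That decomposition claim is false: in $\SU(n,q)$ every $2$-cycle weight does lie in $\FF{\sqrt q}$ (this is exactly \cref{rem:ds-du-2}), yet the full set of cycle weights generates $\FF q$ by \cref{genfq}. The fix is immediate and does not need $2$-cycles: some cycle has weight outside $\FF{\sqrt q}$ just because the weights generate $\FF q$; since $H=\SU$ forces that cycle to be unitary, and a cycle that is simultaneously symplectic and unitary has $w=\overline w\in\FF{\sqrt q}$, you get the contradiction.

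The substantive divergence from the paper is the unitary converse (ruling out $H=\SL(n,q)$ when all cycles are unitary). You propose to run the spanning-tree construction ``symmetrically'' to produce an invariant hermitian form; the paper does something quite different. It shows that the property ``every cycle of $\Gamma(Z')$ is unitary'' is preserved when one adjoins a conjugate $t=s_2s_1s_2^{-1}$: first one notes all edges must be two-way (a one-way edge plus strong connectivity gives a one-way, hence non-unitary, cycle), then explicit computations give $d_u(r,t)=0$ and $d_u(r,t,s_i)=0$ for $i=1,2$, and finally the gluing identity \eqref{eq:gluing} for $P_u$ propagates this to arbitrary cycles through $t$. Since $\SL(n,q)$ is transitive on transvections, iterating would make every cycle of $\Gamma(\mt)$ unitary, which is absurd. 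Your alternative is plausible, but it is not ``symmetric'' in any routine sense: the rescaling $a_i\mapsto\mu_i a_i$ moves the hidden parameter $\lambda_i$ by the norm $N(\mu_i)$ rather than by $\mu_i^2$, you must maintain $\Tr(\lambda_i)=0$ along the tree, and extending $f(a_i,a_j):=\phi_j(a_i)/\lambda_j$ \emph{sesquilinearly} to all of $V$ requires consistency checks on the fundamental cycles that genuinely use $d_u=0$ together with the Galois conjugation. None of this is sketched, and it is where the real work would lie; the paper's conjugation-closure argument is more computational but sidesteps constructing the form entirely.
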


\begin{proof}
  By \cref{thm:Wagner} and by \cref{rem:weights-gen-def-field},
  we have that $\langle Z\rangle$ must be
  isomorphic to one of $\SL(n,q)$, $\Sp(n,q)$ or $\SU(n,q)$.

  Let us assume that $\langle Z\rangle=SU(n,q)$ and
  let $(r_1,r_2,\ldots,r_k)$ be an arbitrary cycle in $\Gamma(Z)$. Using  the
  description of unitary transvections, each $r_i$ can be written in the form
  $r_i=1+\lambda_i u_i\otimes \varphi_{u_i}$ where $u_i\in V$ is singular and
  $\Tr(\lambda_i)=\lambda_i+\lambda_i^{\sqrt q}=0$.
  Then we have
  \begin{align*}
    w(r_1,r_2,\ldots,r_k)
    &=\varphi_{u_2}(\lambda_1u_1)\varphi_{u_3}(\lambda_2u_2)\cdots
      \varphi_{u_1}(\lambda_ku_k)\\
    &=\prod_{i=1}^k\lambda_i\cdot f(u_2,u_1)f(u_3,u_2)\cdots f(u_1,u_k)\\
    &=(-1)^k\prod_{i=1}^k\lambda_i^{\sqrt q}\cdot
      f(u_1,u_2)^{\sqrt q}f(u_2,u_3)^{\sqrt q}\cdots f(u_k,u_1)^{\sqrt q}\\
    &=(-1)^k\big(f(u_1,\lambda_2u_2)f(u_2,\lambda_3u_3)\cdots
      f(u_k,\lambda_1u_1)\big)^{\sqrt q}\\
    &=(-1)^kw(r_k,r_{k-1},\ldots,r_1)^{\sqrt q},
  \end{align*}
  so $d_u(r_1,r_2,\ldots,r_k)=0$. A similar calculation shows that if $\fl Z\fr=
  Sp(n,q)$, then every cycle in $\Gamma(Z)$ is symplectic.

  Let us assume that every cycle of $\Gamma(Z)$ is symplectic. Then
  $\langle Z\rangle \leq \langle Z^{\FF q}\rangle=\Sp(n,q)$ by
  \cite[Corollary 5.3]{HZ}.

  Thus, it remains to prove that if $Z$ generates $\SL(n,q)$, then
  $\Gamma(Z)$ must contain a non-unitary cycle.  We prove this by
  using an argument which is very similar to the one given in the proof of
  \cite[Lemma 4.6]{HZ}.

  For the remainder of the proof let $q$ be a square and
  $\langle Z\rangle=\SL(n,q)$.
  Let us consider a set of tranvections $Z'\supset Z$ and let us assume that
  every cycle in $\Gamma(Z')$ is unitary.
  If there would be a one-way
  directed edge $(s,t)\in E(Z')$, then, since $\Gamma(Z')$ is strongly
  connected, there would be a directed cycle $(s,t,r_1,\ldots,r_m)$ in
  $\Gamma(Z')$. Clearly, such a cycle must be non-unitary. Therefore,
  every edge of $\Gamma(Z')$ is two-way directed. Furthermore,
  we know that $w(s,t)\in \FF {\sqrt q}$
  for every $(s,t)\in E(Z')$ by \cref{rem:ds-du-2}.
  Now, for every (two-way) directed cycle
  $(r_1,r_2,\ldots,r_k)$ in $\Gamma(Z')$ one can define
  \[
    P_u(r_1,\ldots,r_k):=
    \frac{w(r_1,r_2,\ldots,r_k)}{w(r_k,r_{k-1},\ldots,r_1)^{\sqrt q}}.
  \]
  Clearly, a directed cycle $(r_1,\ldots,r_k)$ is unitary if and only
  if $P_u(r_1,\ldots,r_k)=(-1)^k$.  One can show that under the
  assumption that every $2$-cycle is unitary, the cycle parameter
  $P_u$ is well-behaved under ``gluing'' two-way directed cycles. More
  concretely, if a directed cycle $(r_1,\ldots,r_k)$ is obtained from
  two directed cycles $(r_1,\ldots,r_i,q_1,\ldots,q_l,r_j,\ldots,r_k)$
  and $(r_i,\ldots,r_j,q_l,\ldots,q_1)$ ($i<j$) glued along their joint
  subpath $r_i,q_1,q_2,\ldots,q_l,r_j$, then
  \begin{equation}
    P_u(r_1,\ldots,r_k)=P_u(r_1,\ldots,r_i,q_1,\ldots,q_l,r_j,\ldots,r_k)\cdot
    P_u(r_i,\ldots,r_j,q_l,\ldots,q_1).\label{eq:gluing}
  \end{equation}
  (Observe that the joint subpath must be oppositely directed in the
  two cycles.)
  Indeed,
  \begin{equation*}
    \begin{split}
      &P_u(r_1,\ldots,r_i,q_1,\ldots,q_l,r_j,\ldots,r_k)\cdot
      P_u(r_i,\ldots,r_j,q_l,\ldots,q_1)\\
      & =\frac{w(r_1,\ldots,r_i,q_1,\ldots,q_l,r_j,\ldots,r_k)\cdot
        w(r_i,\ldots,r_j,q_l,\ldots,q_1)}{
        w(r_k,\ldots,r_j,q_l,\ldots,q_1,r_i,\ldots,r_1)^{\sqrt q}\cdot
        w(q_1,\ldots,q_l,r_j,\ldots,r_i)^{\sqrt q}
      }\\ &
      =\frac{w(r_1,\ldots,r_k)\cdot \prod_{s=1}^{l-1}w(q_s,q_{s+1})
        \cdot w(r_i,q_1) \cdot w(q_l,r_j)}
      {w(r_k,\ldots,r_1)^{\sqrt q}\cdot
        \prod_{s=1}^{l-1}w(q_s,q_{s+1})^{\sqrt q}
        \cdot (w(q_1,r_i) \cdot w(r_j,q_l))^{\sqrt{q}}}=
      P_u(r_1,\ldots,r_k).
    \end{split}
  \end{equation*}
  Let $(s_1,s_2)\in E(Z')$ be any (two-way) directed edge and let
  $t:=s_2s_1s_2^{-1}$. We claim that every cycle of
  $\Gamma(Z'\cup \{t\})$ is unitary. Clearly, this should be checked
  for cycles containing $t$. We prove this for $2$-cycles and for some
  $3$-cycles first.

  Let
  $s_1=1+u_1\otimes \phi_1,\ s_2=1+u_2\otimes\phi_2$, so
  $t=1+(u_1+\phi_2(u_1)u_2)\otimes
  (\phi_1-\phi_1(u_2)\phi_2)$. For any
  $r=1+v\otimes \psi\in Z'$ we have
  \begin{align*}
    w(r,t)&=(\phi_1-\phi_1(u_2)\phi_2)(v)\cdot
            \psi(u_1+\phi_2(u_1)u_2)=\phi_1(v)\psi(u_1)\\
          &+\phi_1(v)\phi_2(u_1)\psi(u_2)-
            \phi_1(u_2)\phi_2(v)\psi(u_1)
            -\phi_1(u_2)\phi_2(v)\phi_2(u_1)\psi(u_2)\\
          &=w(s_1,r)+w(r,s_1,s_2)-w(s_2,s_1,r)-w(s_1,s_2)w(r,s_2).
  \end{align*}
  Since $(s_1,r),\ (s_1,s_2),\ (r,s_2)$ are unitary,
  their weights are in $\FF{\sqrt q}$. Furthermore,
  $w(r,s_1,s_2)=-w(s_2,s_1,r)^{\sqrt q}$ implies that
  $w(r,s_1,s_2)-w(s_2,s_1,r)\in \FF {\sqrt q}$, as well. So,
  $w(r,t)\in \FF {\sqrt q}$, that is, $(r,t)$ is unitary.

  Furthermore,
  \begin{align*}
    d_u(r,t,s_2)&=(\phi_1-\phi_1(u_2)\phi_2)(v)\cdot
                  \phi_2(u_1+\phi_2(u_1)u_2)\cdot\psi(u_2)\\
                &+\Big((\phi_1-\phi_1(u_2)\phi_2)(u_2)\cdot
                  \psi(u_1+\phi_2(u_1)u_2)\cdot \phi_2(v)\Big)^{\sqrt q}\\
                &=\phi_1(v)\phi_2(u_1)\psi(u_2)-\phi_1(u_2)\phi_2(v)
                  \phi_2(u_1)\psi(u_2)\\
                &+\Big(\phi_1(u_2)\psi(u_1)\phi_2(v)\Big)^{\sqrt q}+
                  \Big(\phi_1(u_2)\phi_2(u_1)\psi(u_2)\phi_2(v)
                  \Big)^{\sqrt q}\\
                &=w(r,s_1,s_2)-w(s_1,s_2)w(s_2,r)+w(s_2,s_1,r)^{\sqrt q}
                  +w(s_1,s_2)^{\sqrt q}w(s_2,r)^{\sqrt q}\\
                &=d_u(r,s_1,s_2)-w(s_1,s_2)w(s_2,r)+w(s_1,s_2)w(s_2,r)=0.
  \end{align*}
  A similar calculation shows that $d_u(r,t,s_1)=0$.

  Finally, let $(r_1,r_2,\ldots,r_k,t)$ be any cycle in
  $\Gamma(Z'\cup \{t\})$.  Then both $r_1$ and $r_k$ are connected
  with at least one of $s_1$ and $s_2$. Indeed, writing
  $r_i=1+v_i \otimes \psi_i$ for every $i$, the label of $(r_k,t)$ is
  $\phi_1(v_k)-\phi_1(u_2)\phi_2(v_k)$ and it is nonzero, so at least
  one of $\phi_1(v_k)$ and $\phi_2(v_k)$ must be nonzero. Depending on
  the role of $s_1$ and $s_2$, there are two possibilities:
  \begin{center}
    \tikznewcycleunitary
  \end{center}
  Using the gluing property,
  \[
    P_u(r_1,r_2,\ldots,r_k,t)=\frac{P_u(r_1,r_2,\ldots,r_k,s_1)}{
      P_u(s_1,r_1,t)\cdot P_u(s_1,t,r_k)
    }=\frac{(-1)^{k+1}}{(-1)^3\cdot (-1)^3}=(-1)^{k+1}
  \]
  in the first case and
  \begin{align*}
    P_u(r_1,r_2,\ldots,r_k,t)&=\frac{P_u(r_1,r_2,\ldots,r_k,s_2,s_1)}{
      P_u(s_1,r_1,t)\cdot P_u(s_1,t,s_2)\cdot P_u(s_2,t,r_k)
    }\\ &=\frac{(-1)^{k+2}}{(-1)^3\cdot (-1)^3\cdot (-1)^3}=(-1)^{k+1}
  \end{align*}
  in the second case. Hence $(r_1,r_2,\ldots,r_k,t)$ is unitary.

  Starting from $Z$ we can construct new transvections by a repeated
  conjugation of previously constructed transvections with each other.
  By our previous argument, if every cycle of $\Gamma(Z)$ would be
  unitary, then we could never get a non-unitary cycle with such
  repeated conjugations. However, since $\langle Z\rangle=\SL(n,q)$ and
  all the transvections are conjugate in $\SL(n,q)$, we finally get all
  the transvections. Clearly, the full transvection graph
  $\Gamma(\mt)$ contains also non-unitary cycles, which proves that
  $\Gamma(Z)$ must contain a non-unitary cycle.
\end{proof}
\section{The proof of the main theorem}
By \cref{lem:Xconj} and the previous section, in order to prove \cref{thm:main},
we can assume that the following properties hold for $X$.
\begin{enumerate}
\myitem[(P1)] $G$ is any of $\SL(V)$, $\Sp(V)$,
  $\SU(V)$ and $X\subset \mt(G)$.\label{con-G}
\myitem[(P2)] The sets $_V X$ and $X_{V^*}$ generate
  $V$ and $V^*$ respectively and $\Gamma(X)$ is strongly connected.
  \label{con-sc}
\myitem[(P3)] The weights of all the cycles of
  $\Gamma(X)$ generate $\FF q$. \label{con-genfield}
\myitem[(P4)] If $G=Sp(V)$, then every cycle in
  $\Gamma(\mt)$ is symplectic. \label{con-symp}
\myitem[(P5)] If $G=SL(V)$, then $\Gamma(X)$
  contains a non-unitary cycle. \label{con-nonunit}
\end{enumerate}
Note that all of these properties remain true when we add new transvections to
$X$.
\subsection{Decreasing the diameter of $\Gamma(X)$}
\label{sec:dec-diameter}
The goal of this section is to obtain a set of transvections
such that the diameter of the associated transvection graph is small.
\begin{lem} \label{lem:smalldiam} There is a set of transvections
  $X'\supset X$, with $\ell_X(X') \leq O(n)$ and with the
  following property.  For any $s,t\in\mt$ with $[s,t]\notin\E$
  there is an $r\in X'$ such
  that $[s,r],[r,t]\in \E$.  In other words,
  $\diam(\Gamma(X''))\leq 2$ for every
  $X' \subseteq X'' \subseteq \mt$.
\end{lem}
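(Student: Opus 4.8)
We need, for every pair of transvections $s=1+u\otimes\phi$ and $t=1+v\otimes\psi$ with $\phi(v)=0$ and $\psi(u)=0$ (i.e.\ $[s,t]\notin\E$ and also $[t,s]\notin\E$, since $s,t$ commute), to produce a single transvection $r\in X'$ with $[s,r],[r,t]\in\E$, meaning $\phi_r(u)\neq 0$ and $\psi(u_r)\neq 0$ where $r=1+u_r\otimes\phi_r$. Geometrically, writing $s$ via the point $\langle u\rangle$ and hyperplane $\ker\phi$, and $t$ via $\langle v\rangle$ and $\ker\psi$, we want an $r$ whose direction vector $u_r$ lies outside $\ker\psi$ and whose defining hyperplane $\ker\phi_r$ does not contain $u$. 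The idea is to manufacture such an $r$ from a \emph{bounded number} of conjugations inside $X$, uniformly over all $(s,t)$, by first building a rich enough supply of transvections (controlled in $\ell_X$ by $O(n)$) and then checking that this supply always contains a suitable middle vertex.

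\textbf{Step 1: build a large $X$-closed, well-spread family.} Starting from $X$ and using \cref{lem:conj_tr}, I would repeatedly conjugate to obtain a set $X'$ of transvections whose direction vectors $_V X'$ and whose hyperplane-functionals $X'_{V^*}$ are ``large enough'' — ideally containing a basis of $V$ and of $V^*$ together with enough sums of basis vectors. Property \ref{con-sc} guarantees that $_V X$ already generates $V$ and $X_{V^*}$ generates $V^*$ and that $\Gamma(X)$ is strongly connected; a strongly connected graph on the generating set lets us, along directed paths of length $O(n)$, conjugate a fixed $t_0\in X$ so that its direction vector walks through a spanning set — each conjugation $t_2t_1t_2^{-1}$ costs $\ell_X$-length $2$, and $O(n)$ of them suffice to sweep out a basis. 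This is exactly the kind of ``cutting into steps'' the paper advertises after introducing $\ell_X(Z)\le\ell_X(Y)\ell_Y(Z)$, so $\ell_X(X')\le O(n)$.

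\textbf{Step 2: find the middle vertex.} Given $(s,t)$ as above, I want $r\in X'$ with $u_r\notin\ker\psi$ and $u\notin\ker\phi_r$. Since $X'_{V^*}$ spans $V^*$, not every $\phi_r$ can kill the fixed nonzero vector $u$; and since $_V X'$ spans $V$, not every $u_r$ can lie in the hyperplane $\ker\psi$. The subtlety is that we need a \emph{single} $r$ satisfying \emph{both} constraints simultaneously. If $X'$ were merely spanning this could fail, so Step~1 must build $X'$ densely enough — e.g.\ containing, for each basis vector $e_i$, transvections with direction $e_i$ and with several different hyperplanes, so that for any hyperplane $\ker\psi$ one of the $e_i\notin\ker\psi$ and among the transvections of direction $e_i$ one has hyperplane avoiding $u$. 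Concretely I expect the argument to fix a basis adapted to $\ker\psi$ and $u$, use a short case analysis, and invoke \cref{lem:conj_tr} once more if needed to adjust a chosen $r$ (one extra conjugation, still $O(n)$ total).

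\textbf{Main obstacle.} The hard part is the \emph{simultaneity} in Step~2: ensuring the same $r$ dodges the hyperplane $\ker\psi$ with its direction \emph{and} dodges the point $u$ with its hyperplane, uniformly over all $(s,t)$, while keeping $\ell_X(X')=O(n)$ rather than something like $O(n^2)$. The resolution should be to be economical in Step~1 — build $X'$ so that $(_VX', X'_{V^*})$ form a suitably ``generic'' configuration (a basis plus $O(n)$ extra vectors/functionals in general position relative to each coordinate hyperplane) achievable by $O(n)$ conjugations total, after which Step~2 is an elementary linear-algebra pigeonhole. The final sentence of the statement, $\diam(\Gamma(X''))\le 2$ for all $X'\subseteq X''\subseteq\mt$, then follows immediately: adding transvections only adds edges, and any non-edge $[s,t]$ is bridged by the $r\in X'\subseteq X''$ just produced, while an edge $[s,t]\in\E(X'')$ is already a path of length $1$.
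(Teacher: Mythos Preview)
Your proposal has a genuine gap, and the paper takes a quite different route that avoids the difficulty you flag.

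\medskip

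\textbf{The gap.} You correctly identify the obstacle: you need a single $r\in X'$ whose direction vector avoids $\ker\psi$ \emph{and} whose functional does not kill $u$, simultaneously, for every pair $(s,t)$. Your proposed fix --- build $X'$ so that its directions and functionals are in ``general position'' --- is not made precise, and it is not clear how to achieve this with only $O(n)$ conjugations. For a given fixed basis of directions, the hyperplanes attached to those directions need not be varied enough to avoid an arbitrary point $u$; arranging ``several different hyperplanes'' for each basis direction already threatens a cost of order $n$ per basis vector, i.e.\ $O(n^2)$ total. The pigeonhole you invoke is never actually carried out.

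A minor point: the hypothesis $[s,t]\notin\E$ only says $\psi(u)=0$; you additionally assume $\phi(v)=0$, which is not part of the statement (and not needed).

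\medskip

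\textbf{What the paper does instead.} The paper does not build a fixed rich $X'$ and then search in it. Rather, for each pair $(s,t)$ it constructs the bridging transvection $r$ \emph{directly}, as follows. Since $_VX$ spans $V$ and $X_{V^*}$ spans $V^*$ and $\Gamma(X)$ is strongly connected, there is a directed path $[s,r_1,\ldots,r_k,t]$ with all $r_i\in X$. Take it of minimal length. A short linear-independence argument (the direction vectors $v_1,\ldots,v_{k-1}$ along a minimal path are independent, because minimality forces $\phi_{i+1}(v_j)=0$ for $j<i$ and $\phi_{i+1}(v_i)\neq 0$) gives $k\le n+1$. Now set
\[
  r:=r_kr_{k-1}\cdots r_2\,r_1\,r_2^{-1}\cdots r_{k-1}^{-1}r_k^{-1},
\]
a single transvection with $\ell_X(r)\le 2k-1\le 2n+1$. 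An inductive computation using \cref{lem:conj_tr} shows that the functional of $r$ still does not kill $u$ (each conjugation by $r_i$ replaces $\psi_{i-1}$ by $\psi_{i-1}-\psi_{i-1}(v_i)\phi_i$, and $\phi_i(u)=0$ along a minimal path), while the direction of $r$ acquires a nonzero $v_k$-component, so $\psi$ does not kill it. Thus $[s,r,t]$ is a directed path of length~$2$.

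This ``compress a shortest path into one conjugate'' idea is the missing ingredient in your approach: it sidesteps the simultaneity problem entirely, because the desired $r$ is manufactured from the specific pair $(s,t)$ rather than drawn from a pre-built pool.
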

\begin{proof}
  Let $s,t \in \mathcal{T}$ be two transvections of the form
  $s=1+v \otimes \phi$, $t=1+w \otimes \psi$ with $[s,t]\notin\E$. If
  there is no edge going from $s$ to any element of $X$ then
  $\nu(v)=0$ for every $\nu \in X_{V^*}$, which contradicts the fact
  that $\langle X_{V^*} \rangle = V^*$ since $v \neq 0$. Similarly, if
  there is no edge going from any element of $X$ to $t$ then
  $\psi(u)=0$ for every $u \in {_VX}$, which contradicts the fact that
  $\langle {_VX} \rangle = V$ since $\psi \neq 0$. Since $\Gamma(X)$
  is strongly connected, there exists a path $[s,r_1,\ldots,r_k,t]$ in
  $\Gamma(X)$, where $r_1,\ldots,r_k \in X$. Choose such path of
  minimal length.

  We claim that $k \leq n+1$. To prove this, write
  $r_i = 1+v_i \otimes \phi_i$ for every $i=1,\ldots,k$. Observe that, by
  minimality of $k$, $\phi_{i+1}(v_j)=0$ unless $j=i$ or (possibly)
  $j >i+1$, and $\phi_{i+1}(v_i) \neq 0$ for every
  $i=1,\ldots,k-1$. Let $\alpha_1, \ldots, \alpha_{k-1} \in F$ be such
  that $\sum_{j=1}^{k-1} \alpha_j v_j = 0$. For any
  $i \in \{1,\ldots,k-1\}$ we have
  \[
    0 = \phi_{i+1} \left( \sum_{j=1}^{k-1} \alpha_j v_j \right)
    = \sum_{j=i}^{k-1} \alpha_j \cdot \phi_{i+1}(v_j). \hspace{1cm}
    (\mbox{Eq.\ i})
  \]
  Starting from equation $k-1$ and going backwards, using that
  $\phi_{i+1}(v_i)\neq 0$ for all $i=k-1,k-2,\ldots,1$, it is clear that
  $\alpha_j=0$ for all $j=1,\ldots,k-1$. This implies that
  $\{v_1,\ldots,v_{k-1}\}$ is linearly independent, therefore
  $k \leq n+1$.

  We claim that $s,r,t$ is a path in $\Gamma(\mathcal{T})$,
  where
  \[r=r_kr_{k-1} \cdots r_2 r_1 r_2^{-1} \cdots
    r_{k-1}^{-1}r_k^{-1}.\]
  Observe that this claim will conclude the proof because $k \leq n+1$, so
  adding the element $r$ to $X$ for every $s,t\in \mt$ we get a set of
  transvections $X'$ with $\ell_X(X')\leq 2n+1$ satisfying the required
  property.
	
  Define
  $t_i := r_i r_{i-1} \cdots r_2 r_1 r_2^{-1} \cdots r_{i-1}^{-1}
  r_i^{-1}$ for all $i$ with $1 \leq i \leq k$, in particular
  $t_1=r_1$ and $t_k=r$. Write $t_i = 1+w_i \otimes \psi_i$. We claim that
  $\psi_i(v) \neq 0$ for every $i \geq 1$. If $i=1$ then this is
  clear.  Assume now $i \geq 2$. By \cref{lem:conj_tr},
  \[
    t_i = r_i t_{i-1} r_i^{-1} = 1+(w_{i-1}+\phi_i(w_{i-1})v_i)
    \otimes (\psi_{i-1}-\psi_{i-1}(v_i) \phi_i).
  \]
  We deduce that there exist nonzero scalars $\lambda_i$ such that
  $w_1=\lambda_1 v_1$, $\psi_1 = \lambda_1^{-1} \phi_1$ and, if
  $i \geq 2$, then
  \[
    \begin{array}{l} w_i = \lambda_i (w_{i-1}+\phi_i(w_{i-1})v_i) \in
      \langle v_1,\ldots,v_i \rangle, \\
      \psi_i = \lambda_i^{-1} (\psi_{i-1}-\psi_{i-1}(v_i) \phi_i),
    \end{array}
  \]
  therefore $\psi_i(v) = \lambda_i^{-1} \psi_{i-1}(v) \neq 0$ by using induction
  on $i$.

  We also need $\psi(w_k) \neq 0$. Since
  $w_{k-1} \in \langle v_1,\ldots,v_{k-1} \rangle$, we have
  \[
    \psi(w_k) = \psi(\lambda_k (w_{k-1}+\phi_k(w_{k-1})v_k)) =
    \lambda_k \phi_k(w_{k-1}) \psi(v_k).
  \]
  We will prove by induction that $\phi_{i+1}(w_i) \neq 0$ for
  every $i=1,\ldots,k-1$. Note that $w_1=\lambda_1 v_1$ hence
  $\phi_2(w_1) = \lambda_1 \phi_2(v_1) \neq 0$. Now assume $i \geq
  2$. Then, since $w_{i-1} \in \langle v_1,\ldots,v_{i-1} \rangle$,
  assuming $\phi_i(w_{i-1}) \neq 0$, we have
  \[
    \phi_{i+1}(w_i) =
    \phi_{i+1}(\lambda_i(w_{i-1}+\phi_{i}(w_{i-1})v_{i})) =
    \lambda_i \phi_{i}(w_{i-1}) \phi_{i+1}(v_{i}) \neq 0.
  \]
  This concludes the proof.
\end{proof}
Since $X'$ has small length over $X$, we may replace $X$ by the set
$X'$ and keep calling it $X$. By the previous lemma, from now on
we can assume the property
\begin{enumerate}
\myitem[(P6)] The (directed) diameter of
  $\Gamma(X')$ is at most $2$ for any $X'\supseteq X$. \label{con-diamXatmost2}
\end{enumerate}
In what follows, we say that a directed path $[r_1,r_2,\ldots,r_k]$ in
$\Gamma(\mt)$ is two-way directed, if it is also a directed path in
the reverse direction, i.e. if $(r_i,r_{i+1})$ is two-way directed for
every $1\leq i<k$. In a similar way, we can also define the concept of
two-way directed cycles, as well.
A transvection graph $\Gamma(Y)$ is said to be two-way connected,
if for every vertices $r,s\in Y$, there is a two-way directed path
in $\Gamma(Y)$ connecting $r$ and $s$. If $\Gamma(Y)$ is two-way connected,
then the two-way diameter of $\Gamma(Y)$
is defined as the smallest $k$ such that every two
vertices in $Y$ are connected in $\Gamma(Y)$
by a two-way directed path of length at most $k$.

Note that if $G$ is either $\SU(V)$ or $\Sp(V)$, then the diameter of
$\Gamma(X)$ coincides with its two-way diameter because in these cases
every edge is a two-way directed edge. So the next claim is only
interesting for $G=SL(V)$.
\begin{lem}\label{thm:double_edge_dist}
  There exists a set of transvections $X'$, containing $X$, such that
  $\ell_X(X') \leq O(1)$ with the following property.  For every
  $s,t\in \mt$ there are $r_1,\ldots,r_k \in X'$ with $k\leq 5$ such
  that $s,r_1,\ldots,r_k,t$ is a two-way directed path. In other
  words, the two-way directed diameter of $\Gamma(X'')$ is at most $6$
  for every $X' \subseteq X'' \subseteq \mt$.
\end{lem}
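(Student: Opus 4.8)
The plan is the following. If $G=\Sp(V)$ or $G=\SU(V)$, then every edge of $\Gamma(\mt)$ is two-way directed, so \ref{con-diamXatmost2} already says that for every $X\subseteq X''\subseteq\mt$ the two-way diameter of $\Gamma(X'')$ equals its directed diameter, which is at most $2$; hence we may take $X'=X$. From now on assume $G=\SL(V)$. I would let $X'$ be the union of $X$ with the set of all transvections that can be written as a product of at most $C$ elements of $X\cup X^{-1}$, where $C$ is a suitable absolute constant; then $\ell_X(X')\le C=O(1)$. Since $_VX$ spans $V$ and $X_{V^*}$ spans $V^*$, every transvection $1+w\otimes\chi$ has both an out-neighbour $g=1+y\otimes\alpha\in X$ (with $\alpha(w)\neq 0$) and an in-neighbour $g'=1+y'\otimes\alpha'\in X$ (with $\chi(y')\neq 0$); together with the strong connectedness of $\Gamma(X)$ this shows that $\Gamma(X')$ is strongly connected, and by \ref{con-diamXatmost2} it has directed diameter at most $2$.

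It then suffices to establish the following two claims:
\begin{enumerate}
\item[(A)] every transvection $s\in\mt$ is connected to some element of $X'$ by a two-way directed edge;
\item[(B)] any two transvections in $X'$ are connected by a two-way directed path of length at most $4$ all of whose internal vertices lie in $X'$.
\end{enumerate}
Granting (A) and (B): given $s,t\in\mt$, pick $p,p'\in X'$ such that $[s,p]$ and $[p',t]$ are two-way directed edges (for $t$, note that $[r,t]$ is two-way directed iff $[t,r]$ is). If $p=p'$, then $s,p,t$ is a two-way directed path; otherwise we concatenate $[s,p]$, a two-way directed path from $p$ to $p'$ of length $\le 4$ given by (B), and $[p',t]$, obtaining a two-way directed path from $s$ to $t$ of length at most $6$, i.e.\ with at most $5$ internal vertices, which is what \cref{thm:double_edge_dist} requires.

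To prove (A): write $s=1+u\otimes\phi$ and set $O_s:=\{g\in X:[s,g]\in\E\}$ and $I_s:=\{g\in X:[g,s]\in\E\}$, both non-empty by \ref{con-sc}. If $O_s\cap I_s\neq\emptyset$, any $g$ in the intersection already gives a two-way directed edge $[s,g]$. Otherwise, since $\Gamma(X)$ is strongly connected of directed diameter $\le 2$, for $g_0\in O_s$ and $g_2\in I_s$ there is a directed path of length $1$ or $2$ from $g_0$ to $g_2$ in $\Gamma(X)$, which together with $s\to g_0$ and $g_2\to s$ closes up to a directed closed walk through $s$ that is either $s\to g_0\to g_2\to s$ or $s\to g_0\to g_1\to g_2\to s$ with all $g_i\in X$; in the second case, choosing a minimal such walk we may assume $g_1\notin O_s\cup I_s$. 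Write $g_i=1+z_i\otimes\gamma_i$. The defining conditions, together with $O_s\cap I_s=\emptyset$ and the transvection relations $\gamma_i(z_i)=0=\phi(u)$, force a number of vanishings — among them $\phi(z_0)=0$ and $\gamma_2(u)=0$, and additionally $\gamma_1(u)=\phi(z_1)=0$ in the four-cycle case, and $\gamma_2(z_0)=0$ unless we are in the three-cycle case — while $\gamma_0(u)$, $\phi(z_2)$, $\gamma_1(z_0)$ and $\gamma_2(z_1)$ are non-zero. I then claim that the transvection $r$ defined by $r:=g_2g_0g_2^{-1}$ in the three-cycle case and by $r:=g_0g_2g_1g_2^{-1}g_0^{-1}$ in the four-cycle case is connected to $s$ by a two-way directed edge: computing $r=1+w_r\otimes\chi_r$ by one or two applications of \cref{lem:conj_tr} and substituting the vanishing/non-vanishing conditions above gives $\phi(w_r)\neq 0$ and $\chi_r(u)\neq 0$, i.e.\ both $[r,s]\in\E$ and $[s,r]\in\E$. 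Since $\ell_X(r)\le 5\le C$ we have $r\in X'$, proving (A). Claim (B) is then handled by running the same device on a directed closed walk of length $\le 4$ through the two given transvections $p,p'\in X'$ (which exists because $\Gamma(X')$ is strongly connected of directed diameter $\le 2$): an iterated conjugate — of bounded length over $X$ — of one vertex of the walk by the others, with the vertices playing the roles of the $g_i$ above, produces a transvection in $X'$ that is two-way directed adjacent to both $p$ and $p'$ (in a few degenerate subcases one instead gets a two-way directed path from $p$ to $p'$ with one additional internal vertex); either way one gets a two-way directed path from $p$ to $p'$ of length at most $4$ through $X'$, the verification again being a substitution into \cref{lem:conj_tr} organised by the length of the walk and by which of the relevant edges are present and in which direction.

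The step I expect to be the main obstacle is exactly the disjoint-neighbourhood case of (A) and its analogue in (B): when $O_s$ and $I_s$ are disjoint, no single conjugate of a generator can be two-way directed adjacent to $s$, and one is forced up to a directed four-cycle through $s$ and to a length-$5$ iterated conjugate; arranging all subcases so that the number of conjugations — hence both $\ell_X(X')$ and the final path length — stays bounded by an absolute constant, while every subcase genuinely closes, is the delicate point. (In contrast to the later parts of the proof of \cref{thm:main}, this lemma does not use that $q$ is odd.)
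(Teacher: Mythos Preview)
Your two-step plan (A)+(B) is exactly the paper's structure, and your treatment of (A) is correct. One simplification you miss: the four-cycle subcase is unnecessary. Applying \ref{con-diamXatmost2} to $X\cup\{s\}$ directly gives, from any $g_0\in O_s$, a directed path of length $\le 2$ from $g_0$ to $s$; since $O_s\cap I_s=\emptyset$ this path has length exactly $2$, and its middle vertex $t\in X$ yields the three-cycle $s\to g_0\to t\to s$ immediately. So a single conjugation $t g_0 t^{-1}$ always suffices for (A), as the paper does.

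The genuine gap is (B), in two respects. First, as formulated --- for \emph{all} pairs $p,p'\in X'$ with $X'=\{t\in\mt:\ell_X(t)\le C\}$ --- it is circular. The ``iterated conjugate of one vertex of the walk by the others'' unavoidably involves conjugation by $p$ or $p'$ themselves: for a one-way edge $[r_1,r_2]$ with return path $[r_2,t,r_1]$, the paper must take $t_1=r_1tr_1^{-1}$ and sometimes then $t_2=r_2t_1r_2^{-1}$. Thus the new internal vertex has length $O(C)$ over $X$, not $\le C$, and need not lie in $X'$; no finite choice of $C$ closes this. The paper sidesteps this by a staged construction: it only proves the two-way-distance $\le 4$ claim for pairs $r_1,r_2$ in the intermediate set $X_1$ produced by (A) (so $\ell_X(r_i)\le 3$), and then lets $X'$ contain the resulting conjugates. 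For your argument to work you must restrict (B) to the specific $p,p'$ coming out of (A), not to all of $X'$. Second, you skip the reduction that makes (B) tractable: the paper first uses \ref{con-diamXatmost2} to reduce to the case of a single one-way edge $[r_1,r_2]$, and then shows by a two-case computation that such an edge can be replaced by a two-way path $[r_1,t',r_2]$ of length $2$; applying this to each of the at most two edges of $[r_1,r,r_2]$ gives the bound $4$. Your proposal that a single conjugate is simultaneously two-way adjacent to both $p$ and $p'$ is neither what the paper proves nor something you verify.
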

\begin{proof}
  First, we increase $X$ to have the property that there is a two-way
  directed edge from any $s\in \mt$ into $X$.  Let us assume that
  $s\in \mt$ is an arbitrary vertex which is not connected to any
  element of $X$ by a two-way directed edge. Since $\langle X_{V^*}
  \rangle = V^*$, there is an $r\in X$ with $[s,r]\in \E$. By property
  \ref{con-diamXatmost2}, the diameter of $X \cup \{s\}$ is at most
  $2$, so there is a $t\in X$ such that $[r,t,s]$ is a directed
  path. Let us write $s=1+u\otimes\phi,\ r=1+v\otimes\psi$ and
  $t=1+w\otimes \chi$.  Let
  $t_s=trt^{-1}=1+(v+\chi(v)w)\otimes(\psi-\psi(w)\chi)$.  By our
  assumption $\phi(v)=\chi(u)=0$, while
  $w(s,r,t)=\psi(u)\chi(v)\phi(w)\neq 0$. Thus, we have
  \[
    \phi(v+\chi(v)w)=\phi(w)\chi(v)\neq 0
    \textrm{ and }(\psi-\psi(w)\chi)(u)=\psi(u)\neq 0,
  \]
  so $(s,t_s)$ is a two-way directed edge. Adding $t_s$ to $X$ for
  every $s$ which is not connected to $X$ by a two-way directed edge,
  the resulting $X_1$ will have the required property.

  In view of the previous condition, we would like to increase $X_1$ to
  an $X'$ such that the two-way distance of any $r_1,r_2\in X_1$ in
  $\Gamma(X')$ is at most $4$. (Note that this property only implies
  that the two-way diameter of $X'$ is at most $6$.)  By property
  \ref{con-diamXatmost2}, for any $r_1,r_2\in X_1$ there is an $r\in X_1$ such
  that $[r_1,r,r_2]$ is a directed path.  Therefore, it is enough to prove
  that we can increase $X_1$ to $X'\subset \mt$ (whose length over $X$
  is small enough) such that for any one-way directed edge
  $[r_1,r_2]\in X_1$ there is a $t\in X'$ such that $[r_1,t,r_2]$ is a
  two-way directed path. After this the proof will become complete.

  Let $r_1=1+u_1\otimes\phi_1$ and $r_2=1+u_2\otimes \phi_2$ be
  elements of $X_1$ such that $[r_1,r_2]$ is a one-way directed edge,
  i.e.\ $\phi_2(u_1)\neq 0,\ \phi_1(u_2)=0$.
  By property \ref{con-diamXatmost2},
  there is a $t=1+v\otimes\psi\in X$ such that $[r_2,t,r_1]$ is a
  directed path of length $2$. If this is a two-way path, then there
  is nothing to be done. Otherwise, we distinguish two cases:

  \textbf{Case 1:} Exactly one of $[r_2,t]$ and $[t,r_1]$ is one-way directed.
  
  Let us assume that, say, $[r_2,t]$ is a one-way directed edge but
  $(t,r_1)$ is a $2$-cycle.  Let
  $t_1:=r_1tr_1^{-1}=1+(v+\phi_1(v)u_1)\otimes(\psi-\psi(u_1)\phi_1)$.
  Now,
  \begin{align*}
    &(\psi-\psi(u_1)\phi_1)(u_1)= \psi(u_1)\neq 0,&&
      \phi_1(v+\phi_1(v)u_1)=\phi_1(v)\neq 0,   \\
    &(\psi-\psi(u_1)\phi_1)(u_2)= \psi(u_2)\neq 0,&&
      \phi_2(v+\phi_1(v)u_1)=\phi_1(v)\phi_2(u_1)\neq 0,
  \end{align*}
  so $[r_2,t_1,r_1]$ is two-way directed.

  \textbf{Case 2:} Both of the edges in the path
  $[r_2,t,r_1]$ are one-way directed.

  Let again $t_1=r_1tr_1^{-1}$.
  The above calculation now shows that $[r_2,t_1,r_1]$ is a directed
  path such that $(r_2,t_1)$ is a $2$-cycle, while $[t_1,r_1]$ is a
  one-way directed edge. Now, the same argument as in Case 1 can be
  applied but to the element $t_2=r_2t_1r_2^{-1}$ to get a two-way directed
  path $[r_2,t_2,r_1]$.
\end{proof}
Again, we may replace $X$ by $X'$ to ensure the following hereditary property.
\begin{enumerate}
  \myitem[(P7)] The two-way diameter of $\Gamma(X')$ is at most $6$
  for any $X'\supseteq X$.
  \label{con-2waydiamXatmost6}
\end{enumerate}
\subsection{Generating the $\FF{q_0}$-closure of $X$}
\label{sec:closure-of-Y}
Throughout this section let $X\subset G$ be a set of transvections possessing
all properties from \ref{con-G} to \ref{con-2waydiamXatmost6}.
The goal of this section is to generate the $\FF{q_0}$-closure of $X$ in
short length over $X$.

Our main tool here is the concept of weight of cycles.  In what
follows, when we talk about a directed cycle $(r_1,r_2,\ldots, r_k)$
in $\Gamma(\mt)$, we generally mean that the indices of its vertices are
elements of $\ZZ_k$, so $r_{k+1}=r_1,\;r_{k+2}=r_2$, etc.

For every integer $k \geq 2$, let $L_k=L_k(X)$ be the subfield of
$\mathbb{F}_q$ generated by the weights of the cycles in $\Gamma(X)$
of length at most $k$. The sequence $\{L_i\}_{i=2}^{\infty}$ is an
increasing sequence of subfields in $\mathbb{F}_q$.
\begin{lem}\label{lem:short-cycles-gen-Fq}
  Assume that the integer $k \geq 3$ is such that $L_{k-1} < L_k$. Then
  \begin{enumerate}
  \item $k \in \{3,4,5\}$.
  \item If $k \in \{4,5\}$ and $(r_1,\ldots,r_k)$ is a cycle whose
    weight is not in $L_{k-1}$, then there is an index $i$ with
    $1\leq i\leq k$ such that $[r_i,r_{i+2}]\notin E(X)$.
  \end{enumerate}
\end{lem}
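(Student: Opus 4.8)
The plan is to deduce both statements from a single cut-and-paste identity for cycle weights, fed by property \ref{con-diamXatmost2}. Given a cycle $\mathbf C=(r_1,\dots,r_k)$ in $\Gamma(X)$ together with any directed path $P$ from $r_k$ to $r_3$ and any directed path $Q$ from $r_3$ to $r_k$, both lying in $\Gamma(X)$, I would form three auxiliary cycles by concatenation: $\mathbf C^{(1)}$ running $r_3\to r_4\to\cdots\to r_k$ and then along $P$; $A$ running $r_k\to r_1\to r_2\to r_3$ and then along $Q$; and $B$ running along $P$ and then along $Q$. Writing each weight as the product of the labels along the corresponding closed walk, one checks at once that $w(\mathbf C)\cdot w(B)=w(\mathbf C^{(1)})\cdot w(A)$; since $B$ is a genuine cycle, $w(B)\neq 0$, so
\[
  w(\mathbf C)=\frac{w(\mathbf C^{(1)})\,w(A)}{w(B)} .
\]
By \ref{con-diamXatmost2} applied to $\Gamma(X)$ itself, $P$ and $Q$ may always be taken of length at most $2$ with all vertices in $X$; then $\mathbf C^{(1)}$ has length at most $(k-3)+2=k-1$, $A$ has length at most $3+2=5$, and $B$ has length at most $2+2=4$, and all three are cycles in $\Gamma(X)$.

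For part (1): assuming $L_{k-1}<L_k$, there is a cycle $\mathbf C=(r_1,\dots,r_k)$ of length \emph{exactly} $k$ with $w(\mathbf C)\notin L_{k-1}$, since otherwise every generator of $L_k$ would already lie in $L_{k-1}$. If $k\ge 6$ then $5\le k-1$ and $4\le k-1$, so $w(\mathbf C^{(1)}),w(A),w(B)\in L_{k-1}$ by the definition of $L_{k-1}$, and the identity forces $w(\mathbf C)\in L_{k-1}$, a contradiction. Hence $k\le 5$, and with $k\ge 3$ this gives $k\in\{3,4,5\}$.

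For part (2): let $k\in\{4,5\}$ and $w(\mathbf C)\notin L_{k-1}$. If $k=5$ and $[r_i,r_{i+2}]\in E(X)$ for some $i$, relabel the cycle cyclically so that $i=3$; then $Q$ may be taken to be the single edge $r_3\to r_5$, so $A$ has length $4=k-1$, $B$ has length at most $3$, $\mathbf C^{(1)}$ has length at most $4$, and the identity again yields $w(\mathbf C)\in L_4=L_{k-1}$, a contradiction — hence for $k=5$ in fact $[r_i,r_{i+2}]\notin E(X)$ for every $i$. If $k=4$ and, for contradiction, $[r_i,r_{i+2}]\in E(X)$ for all $i$, then both $[r_1,r_3]$ and $[r_3,r_1]=[r_3,r_5]$ lie in $E(X)$, so $\{r_1,r_3\}$ is a two-way chord of $\mathbf C$; cutting $\mathbf C$ along it gives the triangles $(r_1,r_2,r_3)$ and $(r_3,r_4,r_1)$ with $w(r_1,r_2,r_3)\,w(r_3,r_4,r_1)=w(\mathbf C)\,w(r_1,r_3)$, and since the two triangle weights lie in $L_3=L_{k-1}$ and $w(r_1,r_3)\in L_2\subseteq L_{k-1}$, this again gives $w(\mathbf C)\in L_{k-1}$, a contradiction. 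So some $[r_i,r_{i+2}]\notin E(X)$, as claimed.

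Everything past the identity is length bookkeeping, and the argument runs uniformly over $\SL(V)$, $\Sp(V)$ and $\SU(V)$: the directedness of edges in the linear case causes no trouble, since we only ever travel along directed paths supplied by \ref{con-diamXatmost2}, never along a reversed edge. The point that really shapes the statement is that one must bypass \emph{two} consecutive vertices of $\mathbf C$ — bypassing a single vertex by a length-$2$ detour shortens nothing — so the auxiliary cycle $A$ may have length $5$, and the inequality $5\le k-1$ is exactly what forces $k\ge 6$ in part (1) and what makes part (2) require that a short chord be present ($[r_3,r_5]$ when $k=5$, a two-way chord when $k=4$). The only mildly delicate points I foresee are this bookkeeping and the degenerate cases in which some of the $r_i$ coincide, which are absorbed by the same purely formal manipulation of label products.
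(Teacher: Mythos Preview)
Your proof is correct and takes essentially the same approach as the paper: both rest on the cut-and-paste identity $w(\mathbf C)=w(C_1)w(C_2)/w(C_3)$ for three auxiliary cycles obtained by replacing arcs of $\mathbf C$ between two chosen vertices by short detours supplied by \ref{con-diamXatmost2}, followed by length bookkeeping. The only cosmetic difference is that the paper first isolates a general claim (for any $i<j$, at least one of the two arcs of $\mathbf C$ between $r_i$ and $r_j$ is already a shortest path in $\Gamma(X)$) and then specializes to $i=1,\,j=4$, whereas you fix the split points $r_3,r_k$ from the outset and handle the $k=4$ case by a direct two-way chord cut.
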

\begin{proof}
  Let $(r_1,\ldots,r_k)$ be a $k$-cycle in $\Gamma(X)$ whose weight is
  not in $L_{k-1}$. Choosing indices $i$ and $j$ with
  $1\leq i <j\leq k$, we claim that at least one of the following
  holds:
  \begin{itemize}
  \item $[r_i,r_{i+1},\ldots, r_j]$ is a path of minimum length in $\Gamma(X)$
    from $r_i$ to $r_j$;
  \item $[r_j,r_{j+1},\ldots, r_k,r_1,\ldots, r_i]$ is a path of minimum
    length in $\Gamma(X)$ from $r_j$ to $r_i$.
  \end{itemize}
	
  Assume that the claim is false. Then $j \geq i+2$ and
  $(i,j) \not= (1,k)$. Let $x_1,\ldots,x_l$, $y_1,\ldots,y_m\in X$
  such that $[r_i,x_1,\ldots,x_l,r_j]$ is a shorter path than
  $[r_i,r_{i+1},\ldots, r_j]$, that is, $l<j-i-1$, and
  $[r_j,y_1,\ldots,y_m, r_i]$ is a shorter path than the path
  $[r_j,r_{j+1},\ldots, r_k,r_1,\ldots, r_i]$, that is, $m<k-j+i-1$.
  So we have the following picture:
  \begin{center}
    \tikzhatszog
  \end{center}
  Let us consider the four directed cycles of the above picture.
  By assumption, the three cycles
  \begin{align*}
    C_1&:=(r_i,x_1,\ldots,x_l,r_j,r_{j+1},\ldots,r_{i-1}),\\
    C_2&:=(r_i,r_{i+1},\ldots,r_{j-1},r_j,y_1,\ldots,y_m),\\
    C_3&:=(r_i,x_1,\ldots,x_l,r_j,y_1,\ldots,y_m)
  \end{align*}
  all have lengths smaller than $k$, so their
  weights are inside $L_{k-1}$. However,
  \[
    w(r_1,\ldots,r_k)=\frac{w(C_1)\cdot w(C_2)}{w(C_3)},
  \]
  so $w(r_1,\ldots,r_k)\in L_{k-1}$, a contradiction. This completes
  the proof of the claim.

  Now, let us assume that $k\geq 6$ and let us choose $i=1,\ j=4$. By
  the previous paragraph, either $[r_1,r_2,r_3,r_4]$ is a path of
  shortest length between $r_1$ and $r_4$ or
  $[r_4,r_5,r_6,\ldots,r_1]$ is a path of shortest length between
  $r_4$ and $r_1$, in $\Gamma(X)$.  However, both of these paths have
  length at least $3$, while the diameter of $\Gamma(X)$ is at most
  $2$ by \cref{lem:smalldiam}, a contradiction. So $k\leq 5$, as
  claimed.

  Now, let us assume that $k\in \{4,5\}$. If, say,
  $[r_1,r_3]\in E(X)$, then $[r_1,r_2,r_3]$ is not the shortest path
  from $r_1$ to $r_3$, so, using the first paragraph of the proof
  again, we get that $[r_3,\ldots,r_k,r_1]$ must be a shortest path
  from $r_3$ to $r_1$ in $\Gamma(X)$. Since $\diam(\Gamma(X))\leq 2$
  by \cref{lem:smalldiam}, we get that $k=4$ and
  $[r_3,r_1]\notin E(X)$. Hence the last claim follows with $i=3$.
\end{proof}
In what follows for any subset $Y\subset \mt$ we use the notation
$Y^{(k)}$ for the set of all transvections which can be written as a
product of at most $k$ many elements of $Y\cup Y^{-1}$, that is,
$Y^{(k)}:=\{t\in\mt\,|\,\ell_Y(t)\leq k\}$.

Using the previous lemma along with property
\ref{con-genfield}, we get that $L_5(X)=\FF q$.  The following lemma
allows us to assume that $L_3(X)=\FF q$ and $|\FF q:L_2(X)| \leq 2$.
\begin{lem}\label{lem:weights-twoway-edges}
  There exists a set of transvections $X'$ with $\ell_X(X')=O(1)$ such
  that $L_3(X')=\FF q$ and $|\FF q:L_2(X')| \leq 2$.
\end{lem}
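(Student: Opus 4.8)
The plan is to obtain $X'$ from $X$ by three successive enlargements $X\subseteq X_1\subseteq X_2\subseteq X'$, each of length $O(1)$ over the previous one, which push the ``field of cycle weights'' down from length $5$ to length $2$. We use repeatedly that $\diam(\Gamma(X''))\le 2$ for every $X''\supseteq X$ by \ref{con-diamXatmost2}, so that \cref{lem:short-cycles-gen-Fq} applies verbatim to every such $X''$, and that a conjugate $r_ir_jr_i^{-1}$ has length $\le 3$ over the current set and is computed by \cref{lem:conj_tr}. Writing $r=1+a_r\otimes\phi_r$, so that $l(r,s)=\phi_s(a_r)$, the underlying device is: given a cycle with a missing back-diagonal $l(r,r'')=0$, a single conjugation ``merges'' two consecutive vertices into one new transvection without changing the weight of the cycle (except by a controlled sign, or by a product of weights of strictly shorter cycles).

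For the first two steps I would argue as follows. If $(r_1,\dots,r_5)$ is a $5$-cycle in $\Gamma(X)$ with weight $\notin L_4(X)$, then reading off the proof of \cref{lem:short-cycles-gen-Fq} one sees that in fact $l(r_i,r_{i+2})=0$ for \emph{every} $i\in\ZZ_5$; hence $t:=r_5r_1r_5^{-1}$ satisfies $l(r_4,t)=-l(r_5,r_1)l(r_4,r_5)$ and $l(t,r_2)=l(r_1,r_2)$, so $(r_3,r_4,t,r_2)$ is a genuine $4$-cycle of weight $-w(r_1,\dots,r_5)$. Adjoining all such $t$ gives $X_1$ with $\ell_X(X_1)\le3$ and $L_4(X_1)=\FF q$. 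Next, given a $4$-cycle $(r_1,r_2,r_3,r_4)$ in $\Gamma(X_1)$ with weight $w\notin L_3(X_1)$, \cref{lem:short-cycles-gen-Fq}(2) supplies a missing back-diagonal; after a cyclic rotation of the cycle (which leaves $w$ fixed) we may assume $l(r_4,r_2)=0$, and then $t:=r_4r_1r_4^{-1}$ gives $l(t,r_2)=l(r_1,r_2)$ and makes $(r_3,t,r_2)$ a genuine $3$-cycle (genuine because $w\notin L_3(X_1)\ni w(r_1,r_2,r_3)$) of weight $w(r_1,r_2,r_3)-w$. Adjoining these yields $X_2$ with $\ell_X(X_2)=O(1)$ and $L_3(X_2)=\FF q$.

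It remains to make $|\FF q:L_2|\le 2$. For a $3$-cycle $(r_1,r_2,r_3)$ of $\Gamma(X_2)$ that is \emph{not} two-way, say $l(r_1,r_3)=0$ after relabelling, the conjugate $t:=r_3r_2r_3^{-1}$ gives $l(r_1,t)=l(r_1,r_2)$ and a two-way edge $(r_1,t)$ with $w(r_1,t)=w(r_1,r_2)+w(r_1,r_2,r_3)$, which is nonzero whenever $w(r_1,r_2,r_3)\notin L_2(X_2)$; so that weight enters the length-$2$ field. For a \emph{two-way} $3$-cycle, put $w=w(r_1,r_2,r_3)$, $w'=w(r_3,r_2,r_1)$; cyclicity gives $ww'=w(r_1,r_2)w(r_2,r_3)w(r_1,r_3)\in L_2(X_2)$. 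In the symplectic case $w'=-w$ by \ref{con-symp}, so $w^2\in L_2(X_2)$ already, and since $\FF q$ has odd characteristic (hence a unique quadratic subextension over any subfield), the compositum $\FF q=L_3(X_2)$ of such $w$'s is at most quadratic over $L_2(X_2)$ — nothing need be added. In the linear and unitary cases one uses a further bounded-length conjugation, e.g.\ $t:=r_3r_1r_3^{-1}$, which produces a two-cycle weight $w(r_1,r_2)-w(r_1,r_3)w(r_2,r_3)+(w'-w)$, so that $w'-w$ enters the length-$2$ field of the enlargement; in the unitary case \cref{thm:structure-from-cycles} forces $d_u=0$, i.e.\ $w'=-w^{\sqrt q}$, so the trace and norm of $w$ over $\FF{\sqrt q}$ both lie there and $w$ satisfies a quadratic over it. Carrying this out for all two-way $3$-cycles makes $\FF q=L_3(X_2)$ at most a quadratic extension of the final $L_2(X')$, and $L_3(X')=\FF q$ is automatic since $X'\supseteq X_2$.

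Steps one and two, and the symplectic part of step three, are pure bookkeeping once the correct conjugate and rotation are chosen. The genuinely delicate point is the two-way $3$-cycle case of step three for $\SL$ and (to a lesser extent) $\SU$: one must check that a constant number of conjugations can always be set up so that $w$ itself — not merely $w\pm w'$ — becomes quadratic over the length-$2$ field of a constant-length enlargement, i.e.\ that both elementary symmetric functions of the relevant pair can be recovered there. This is where the structural dichotomy of \cref{thm:structure-from-cycles} and the exclusion of $q=9,81$ are needed.
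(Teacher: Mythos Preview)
Your first two steps are essentially the paper's argument. The paper writes the single identity
\[
  w(r_1,\ldots,r_{k-2},\,r_kr_{k-1}r_k^{-1})
  = w(r_1,\ldots,r_k) + w(r_1,\ldots,r_{k-1})
\]
under the hypothesis $l(r_{k-2},r_k)=0$ coming from \cref{lem:short-cycles-gen-Fq}, and applies it for $k=5$ and then $k=4$; you do the same with a slightly different choice of which two consecutive vertices to merge, so these steps are interchangeable.

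In step three your route diverges from the paper's, and it is worth noting that the paper's is shorter and uniform while yours splits cases unnecessarily. The paper takes \emph{any} $3$-cycle $(r_1,r_2,r_3)$ with $\delta:=w(r_1,r_2,r_3)\notin L_2$, adjoins $r_3r_2r_3^{-1}$, and computes the single $2$-cycle weight
\[
  m_4:=w(r_1,\,r_3r_2r_3^{-1})
  = m_1+\delta-\frac{m_1m_2m_3}{\delta}-m_2m_3,
\]
where $m_1=w(r_1,r_2)$, $m_2=w(r_2,r_3)$, $m_3=w(r_3,r_1)$. Clearing the denominator gives the quadratic
\[
  \delta^2+(m_1-m_4-m_2m_3)\,\delta-m_1m_2m_3=0
\]
with all coefficients in $L_2(X^{(27)})$, so every such $\delta$ lies in the unique quadratic extension of $L_2(X^{(27)})$ inside $\FF q$. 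No case distinction on whether the $3$-cycle is two-way, and no case distinction on the type of $G$, is needed. Your computation of $w(r_3r_1r_3^{-1},r_2)$ is in fact (up to a sign) the same identity, since $m_1m_2m_3/\delta=w(r_3,r_2,r_1)=w'$; combined with your observation $ww'=m_1m_2m_3\in L_2$ this already gives $w$ quadratic over $L_2(X')$, uniformly in all cases, by the elementary relation $(w+w')^2=(w-w')^2+4ww'$.

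Consequently your closing paragraph overstates the difficulty: the ``delicate point'' you flag is resolved by the two ingredients you yourself produced, and neither \cref{thm:structure-from-cycles} nor the exclusion $q\neq 9,81$ enters the proof of this lemma at all. Those hypotheses are used only later, via \cref{thm:Dickson_for_subsets} in \cref{lem:M-closure-of-X}.
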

\begin{proof}
  Let $k \geq 3$ be an integer. Let
  $r_1=1+u_1\otimes \phi_1,r_2=1+u_2\otimes\phi_2,
  \ldots,r_k=1+u_k\otimes\phi_k\in\Gamma(X)$ be arbitrary
  transvections.  Besides that let $a_i:=\phi_{i+1}(u_i)$ for every
  index $i$ with $1\leq i\leq k$ and let
  $b:=\phi_k(u_{k-2}),\ c:=\phi_1(u_{k-1})$ and
  $d:=\phi_{k-1}(u_k)$. Note that these are certain labels in the
  transvection graph.  We have the following labelled graph:
  \begin{center}
    \tikzshortencycle
  \end{center}
  (Note that only those edges appear on this picture,
  which have roles in the forthcoming arguments.)
  By \cref{lem:conj_tr},	
  \begin{align*}
    r_kr_{k-1}r_k^{-1}
    &=1+(u_{k-1}+\phi_k(u_{k-1})u_k)\otimes(\phi_{k-1}-\phi_{k-1}(u_k)\phi_k)\\
    &=1+(u_{k-1}+a_{k-1}u_k)\otimes(\phi_{k-1}-d\phi_k).
  \end{align*}
  Now, we calculate the weight of $(r_1,\ldots,r_{k-2},r_kr_{k-1}r_k^{-1})$.

  \begin{align*}
    w(r_1,\ldots,r_{k-2},r_kr_{k-1}r_k^{-1})
    &=\prod_{i=1}^{k-3}a_i\cdot\Big((\phi_{k-1}-d\phi_k)(u_{k-2})\Big)
      \Big(\phi_1(u_{k-1}+a_{k-1}u_k)\Big)\\
    &=\prod_{i=1}^{k-3}a_i\cdot(a_{k-2}-db)(c+a_{k-1}a_k)
  \end{align*}
  Assume now that $(r_1,\ldots,r_k)$ is a cycle of minimal length with
  the property that its weight is not in $L_{k-1}(X)$. Observe that
  $w(r_1,\ldots,r_{k-1}) \in L_{k-1}(X)$ because, if it is nonzero,
  then it is the weight of a $(k-1)$-cycle. We would like to apply the
  above process to $(r_1,\ldots,r_k)$. By
  \cref{lem:short-cycles-gen-Fq}, we have $k\leq 5$. Let us assume
  that $k>3$. Then by using \cref{lem:short-cycles-gen-Fq} again, we
  can also assume that $b=\phi_k(u_{k-2})=0$.  Then we have
  \begin{align*}
    w(r_1,\ldots,r_{k-2},r_kr_{k-1}r_k^{-1})
    &=\prod_{i=1}^{k-3}a_i\cdot a_{k-2}(c+a_{k-1}a_k)\\
    &=w(r_1,\ldots,r_k)+w(r_1,\ldots,r_{k-1}).
  \end{align*}
  By our assumption, $w(r_1,\ldots,r_k)\not\in L_{k-1}(X)$ but
  $w(r_1,\ldots,r_{k-1})\in L_{k-1}(X)$, so
  \[w(r_1,\ldots,r_{k-2},r_kr_{k-1}r_k^{-1}) \not \in L_{k-1}(X).\]
  Since $r_kr_{k-1}r_k^{-1}\in X^{(3)}$ we get that
  the weights of the cycles in $\Gamma(X^{(3)})$
  of length at most $4$ generate $\FF q$. Using this argument once again,
  we get that $L_3(X^{(9)}) = \FF q$.

  It remains to prove that we can extend $X^{(9)}$ to an $X'$ with
  $\ell_X(X')=O(1)$ such that $|\FF q:L_2(X')|\leq 2$.  If
  $L_2(X^{(27)})=\FF q$, then we can choose $X'=X^{(27)}$, so let us
  assume that $L_2(X^{(27)}) \neq \FF q$ for the remainder.

  Since $L_3(X^{(9)})=\FF q$, we know that the weights of the $3$
  cycles in $\Gamma(X^{(9)})$ along with $L_2(X^{(9)})$ generate $\FF q$.  Let
  $(r_1,r_2,r_3)$ be any $3$-cycle in $\Gamma(X^{(9)})$, whose weight
  is not inside $L_2(X^{(9)})$. Using the same notation as in the
  first part of the proof, we have the following diagram.
  \begin{center}
    \tikzthreecycle
  \end{center}
  Now, $r_1,r_2,r_3,r_3r_2r_3^{-1}\in X^{(27)}$, so the weights
  \begin{align*}
    m_1:=w(r_1,r_2)&=a_1c,&& m_2:=w(r_2,r_3)=a_2d\\
    m_3:=w(r_3,r_1)&=a_3b,&& m_4:=w(r_1,r_3r_2r_3^{-1})=(a_1-db)(c+a_{2}a_3)
  \end{align*}
  are elements of $L_2(X^{(27)})$.
  Let $\delta=a_1a_2a_3=w(r_1,r_2,r_3)\notin L_2(X^{(9)})$. Then we have
  \[
    m_4=(a_1-db)(c+a_2a_3)=m_1+\delta-\frac{m_1m_2m_3}{\delta}-m_2m_3,
  \]
  so $\delta$ is a root of the polynomial
  $x^2+(m_1-m_4-m_2m_3)x-m_1m_2m_3\in L_2(X^{(27)})[x]$. Thus, we get
  that the weight of any $3$-cycle in $\Gamma(X^{(9)})$ is in a second
  degree extension of $L_2(X^{(27)})$.  But such weights along with
  $L_2(X^{(27)})$ generate $\FF q$, and $\FF q$ contains only at most
  one second degree extension of $L_2(X^{(27)})$. This readily implies
  that $|\FF q:L_2(X^{(27)})| = 2$.
\end{proof}
Now, we show that if $G\neq \SL$, then
$L_2(X)=\FF {q_0}$ can be achieved very easily. (Recall that $q_0=q$
unless $G=SU(n,q)$, when $q_0=\sqrt q$.)  In contrast, proving this in
case of $G=\SL$ seems to be much more difficult (see
\cref{sec:gluing}).

\begin{lem}\label{lem:SympUnit_M=Fq0}
  If $L_3(X)=\FF q$ and that $|\FF q:L_2(X)| \leq 2$, then
  $L_2(X^{(3)}) = \FF {q}$ in the symplectic case and $L_2(X) = \FF
  {q_0}$ in the unitary case.
\end{lem}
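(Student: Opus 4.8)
The plan is to exploit the extra rigidity that the symplectic/unitary structure imposes on labels, namely that every edge is two-way and that $2$-cycles are automatically symplectic (\cref{rem:ds-du-2}), so that the whole weight structure is governed by the bilinear/sesquilinear form $f$. Write the transvections as $r=1+\lambda\cdot u\otimes\varphi_u$ with $u$ singular and $\lambda\in\mf$ (so $\lambda\in\FF q^\times$ in the symplectic case, $\Tr(\lambda)=0$ in the unitary case). Then for two transvections $s=1+\lambda u\otimes\varphi_u$, $t=1+\mu v\otimes\varphi_v$ the label is $l(s,t)=\lambda\mu^{?}$... more precisely $w(s,t)=\lambda\mu f(v,u)f(u,v)=-\lambda\mu f(u,v)^2$ up to the conjugation in the unitary case. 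The point is that $w(s,t)$ is a product of the ``scalar parts'' $\lambda,\mu$ and a ``form part'' coming from $f$, and the scalar parts $\lambda$ range over a one-dimensional $\FF{q_0}$-subspace $\mf$ of $\FF q$. This is what will force $L_2$ down to $\FF{q_0}$ (or to something with $|\FF q:L_2|\le 2$ collapsing to $\FF q$ in the symplectic case since there $q_0=q$).

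First I would treat the symplectic case. Here $q_0=q$, so the claim is just $L_2(X^{(3)})=\FF q$; we start from $L_3(X)=\FF q$ and $|\FF q:L_2(X)|\le 2$. Take a $3$-cycle $(r_1,r_2,r_3)$ in $\Gamma(X)$ whose weight $\delta=w(r_1,r_2,r_3)$ is not in $L_2(X)$ (it exists since $L_3(X)=\FF q>L_2(X)$ would otherwise be contradicted, unless $L_2(X)=\FF q$ already, in which case we are done). As in the proof of \cref{lem:weights-twoway-edges}, form $r_3r_2r_3^{-1}\in X^{(3)}$ and compute the weight $m_4=w(r_1,r_3r_2r_3^{-1})$; using that every edge is two-way and the $2$-cycle weights $m_1,m_2,m_3\in L_2(X)$, one gets the same quadratic relation $\delta^2+(m_1-m_4-m_2m_3)\delta-m_1m_2m_3=0$ over $L_2(X^{(3)})$. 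But in the symplectic case there is an additional identity available: because all the relevant edges are two-way, one can also form the ``reversed'' conjugate and extract a second polynomial relation, or — more cleanly — use that $\delta=w(r_1,r_2,r_3)$ and $w(r_3,r_2,r_1)$ differ only by a sign (by \cref{defin:ds-du} with $d_s=0$ for a symplectic cycle is false for $3$-cycles, so instead) one uses that the product $m_1m_2m_3=w(r_1,r_2)w(r_2,r_3)w(r_3,r_1)$ equals $\delta\cdot w(r_3,r_2,r_1)$, and $w(r_3,r_2,r_1)/\delta$ is, up to sign, a product of squares $f(u_i,u_j)^2/f(u_j,u_i)^2=1$; hence $m_1m_2m_3=\pm\delta^2$. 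Substituting this into the quadratic forces $\delta^2\in L_2(X^{(3)})$, and combined with $\delta\notin L_2(X)$ and $|\FF q:L_2(X)|\le 2$ this gives $\delta\in L_2(X^{(3)})$ after one more step, whence $L_2(X^{(3)})\supseteq L_2(X)(\delta)$; ranging over all such $3$-cycles and using $L_3(X)=\FF q$ yields $L_2(X^{(3)})=\FF q$.

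For the unitary case the target is $L_2(X)=\FF{q_0}$ directly (no passage to $X^{(3)}$). The key structural fact is \cref{rem:ds-du-2}: since $G=\SU(V)$ consists of unitary transvections, every cycle of $\Gamma(\mt)$ — in particular every $2$-cycle, i.e.\ every edge of $\Gamma(X)$ — is unitary, which by \cref{rem:ds-du-2} means exactly $w(s,t)\in\FF{\sqrt q}=\FF{q_0}$ for every edge $(s,t)$. Therefore $L_2(X)$, being generated by $2$-cycle weights, is automatically contained in $\FF{q_0}$. Conversely $L_2(X)\supseteq\FF{q_0}$: this is where I expect the main work, and I would argue it as follows. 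From $L_3(X)=\FF q$ and $|\FF q:L_2(X)|\le 2$ we have $\FF{q_0}\subseteq L_2(X)$ unless $L_2(X)$ is a proper subfield of $\FF{q_0}$, which is impossible because $|\FF q:\FF{q_0}|=2$ and $|\FF q:L_2(X)|\le 2$ together force $L_2(X)\in\{\FF q,\FF{q_0}\}$; and $L_2(X)=\FF q$ is excluded since $L_2(X)\subseteq\FF{q_0}\subsetneq\FF q$. Hence $L_2(X)=\FF{q_0}$.

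The step I expect to be the genuine obstacle is the symplectic one: showing that the quadratic relation coming from the conjugate $r_3r_2r_3^{-1}$ together with the form-theoretic identity $m_1m_2m_3=\pm\delta^2$ actually pins down $\delta$ inside $L_2(X^{(3)})$ rather than merely a quadratic extension of it — one has to be careful about which sign occurs (it depends on the parity coming from $f(u,v)=-f(v,u)$ and on how many factors there are) and about the edge case $\delta^2\in L_2(X)$ but $\delta\notin L_2(X)$, which is precisely the situation $|\FF q:L_2(X)|=2$ that the hypothesis allows; there one needs that passing to $X^{(3)}$ produces a genuinely new weight (not just $\delta^2$ again) to break the extension. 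The unitary direction, by contrast, is essentially bookkeeping once \cref{rem:ds-du-2} is invoked.
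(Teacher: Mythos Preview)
Your unitary argument is correct and is exactly the paper's: from \cref{rem:ds-du-2} every $2$-cycle weight lies in $\FF{\sqrt q}=\FF{q_0}$, so $L_2(X)\subseteq\FF{q_0}$, and the hypothesis $|\FF q:L_2(X)|\le 2$ then forces $L_2(X)=\FF{q_0}$.

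In the symplectic case you have the right ingredients but you do not finish, and the step you flag as ``the genuine obstacle'' is in fact not an obstacle at all once you pin down the sign. Because $f$ is alternating, one has $w(r_3,r_2,r_1)=-w(r_1,r_2,r_3)=-\delta$ (each of the three factors $f(u_j,u_i)/f(u_i,u_j)$ equals $-1$), so your identity is $m_1m_2m_3=-\delta^2$ with a definite minus sign. Substituting this into the quadratic $\delta^2+(m_1-m_4-m_2m_3)\delta-m_1m_2m_3=0$ does \emph{not} give merely $\delta^2\in L_2(X^{(3)})$ as you write; it makes the quadratic collapse to the linear equation
\[
2\delta^2+(m_1-m_4-m_2m_3)\delta=0,\qquad\text{i.e.}\qquad 2\delta=m_4-m_1+m_2m_3,
\]
and since the characteristic is odd this yields $\delta\in L_2(X^{(3)})$ directly. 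No ``one more step'' is needed, and there is no edge case to worry about. The paper carries out exactly this computation, only packaged slightly differently: it expands $m_4=(a_1-db)(c+a_2a_3)$ directly using the symplectic identity $dbc=-a_1a_2a_3$ (which is your $w(r_3,r_2,r_1)=-\delta$ in coordinates) to obtain $m_4=2\,w(r_1,r_2,r_3)+w(r_1,r_2)-w(r_1,r_3)w(r_2,r_3)$, and then solves for $w(r_1,r_2,r_3)$.
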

\begin{proof}
  By our assumption, we already know that $|L_2(X)|\geq \sqrt q$.
  Now, in the unitary case the claim follows, since $L_2(\mt)\leq \FF {q_0}$
  holds by a combined use of \cref{thm:structure-from-cycles} and
  \cref{rem:ds-du-2}.

  In the symplectic case, we take the last part of the previous proof.
  Let $r_i=1+\lambda_iu_i\otimes \varphi_{u_i}\in X$ for $i=1,2,3$.
  In this case we have
  \begin{align*}
    dbc
    &=\varphi_{u_2}(\lambda_3 u_3)\varphi_{u_3}(\lambda_1 u_1)
      \varphi_{u_1}(\lambda_2 u_2)\\
    &=\lambda_1\lambda_2\lambda_3(-\varphi_{u_3}(u_2))(-\varphi_{u_1}(u_3))
      (-\varphi_{u_2}(u_1))=-a_1a_2a_3
  \end{align*}
  hence
  $(r_1,r_3r_2r_3^{-1})$ is a $2$-cycle in  $X^{(3)}$
  whose weight is
  \begin{align*}
    w(r_1,r_3r_2r_3^{-1})
    &=(a_1-db)(c+a_2a_3)=a_1a_2a_3-dbc+a_1c-dba_2a_3\\
    &=2w(r_1,r_2,r_3)+w(r_1,r_2)-w(r_1r_3)w(r_2r_3).
  \end{align*}
  Since the characteristic of $\FF q$ is different from $2$, we have that
  $w(r_1,r_2,r_3)$ is an element of the subfield generated by the $2$-cycles
  of $\Gamma(X^{(3)})$ for any $3$-cycle $(r_1,r_2,r_3)$ of $\Gamma(X)$.
  Thus, the weights of the $2$-cycles of $\Gamma(X^{(3)})$ generate $\FF q$.
\end{proof}
Note that the above proof already used that $q$ is odd, when $G$ is
a symplectic group. Our next proof hevily relies on
\cref{thm:Dickson_for_subsets}, so it uses our full
assumption on $q$.
\begin{lem}\label{lem:M-closure-of-X}
  Let $L:=L_2(X)\leq \FF {q_0}$ and let us assume that $L\neq \FF 9$.
  Then we have the following.
  \begin{thmlist}
  \item $\ell_X(X^L)\leq O((\log|L|)^c)$;\label[lem]{lem:M-closure-of-X-1}
  \item If $X$ contains a transvection subgroup  $s_0^{L_0}$
    over $L_0$ for some subfield $L_0\leq L$, then
    $\ell_X(X^L)\leq O(|L:L_0|^c)$.\label[lem]{lem:M-closure-of-X-2}
  \end{thmlist}
\end{lem}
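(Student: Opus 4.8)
The plan is to produce, in short length over $X$, a transvection subgroup $s_0^L$ of full size $|L|$ inside (an extension of) $X$, and then to use it to $L$-close every transvection of $X$ one at a time. The starting point is the hypothesis $L=L_2(X)$: there is a two-way directed edge $(s,t)\in E(X)$ with $w(s,t)=:\delta$ generating $L$ over $\FF p$ (more precisely, the $2$-cycle weights generate $L$, and by passing to $X^{(O(1))}$ we may assume a single such $\delta$ does, since $L$ is a finite field and hence $L=\FF p(\delta)$ for a primitive element, which is a weight of some short cycle). Now apply \cref{thm:Dickson_for_subsets} with $\mathrm{K}=\Lambda=\{1\}$: since $p\neq 2$ and $L\neq \FF 9$ by hypothesis, we get $\langle s,t\rangle \simeq \SL(2,L)\oplus 1_{n-2}$. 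Inside this $\SL(2,L)$ sits the full transvection subgroup $s^{L}$, and $\ell_{\{s,t\}}(s^L)$ is the diameter of a Cayley graph of $\SL(2,L)$ with a bounded generating set, which is $O((\log|L|)^{c})$ by \cref{cor:Babai-strong} applied to the bounded-rank group $\PSL(2,L)$ (lifting through the center costs only a constant factor, cf.\ the Remark after \cref{cor:Babai-strong} and \cref{lem:Xconj}). This proves part \eqref{lem:M-closure-of-X-1} modulo the closing-up step. For part \eqref{lem:M-closure-of-X-2}, if $X$ already contains $s_0^{L_0}$ we instead want $\ell_X(s_0^{L})=O(|L:L_0|^c)$; here the idea is that producing $\SL(2,L)$ from $\SL(2,L_0)$ together with one extra transvection amounts to generating $\SL(2,L)$ over a subring that is ``most of it already'', and the diameter of $\Cay(\SL(2,L),\SL(2,L_0)\cup\{t\})$ is polynomial in $|L:L_0|$ — again a bounded-rank product-theorem estimate, now with the larger generating set $\SL(2,L_0)$, so $\log|X|\ge \log|L_0|$ and \cref{cor:Babai-strong} gives the ratio bound $O(\log|L|/\log|L_0|)^{c}\le O(|L:L_0|)^{c}$.

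The second, uniform step is the closing-up: given that we now have a transvection subgroup $T_0:=s_0^{L}$ of size $|L|$ available in $X'$ (with $\ell_X(X')$ as above), I would show $\ell_{X'}(X^L)=O(1)$. Take any $r=1+v\otimes\psi\in X$. Pick $s_0=1+u_0\otimes\phi_0$ generating $T_0$. By property \ref{con-diamXatmost2} (diameter $\le 2$), there are short paths connecting $r$ and $s_0$ in both directions, i.e.\ we can find an intermediate transvection $r'$ with $[r,r']$, $[r',s_0]$, $[s_0,r']$, $[r',r]$ all edges (or use \cref{lem:smalldiam}/\cref{thm:double_edge_dist} to arrange a bounded two-way directed path $r,r_1,\ldots,r_k,s_0$). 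Conjugating $r$ by $s_0^{\lambda}$ for $\lambda$ ranging over $L$ and using \cref{lem:conj_tr}: $s_0^{\lambda} r (s_0^{\lambda})^{-1} = 1 + (v+\lambda\phi_0(v)u_0)\otimes(\psi-\lambda\psi(u_0)\phi_0)$, whose direction is $\langle v+\lambda\phi_0(v)u_0\rangle$. These are $|L|$ distinct transvections but not the $L$-multiples of $r$ directly; the standard trick (as in the analogous arguments of \cite{HZ}) is to conjugate twice — first move $r$ so that its direction lies inside the $2$-space $\langle u_0, \text{something}\rangle$ carrying $T_0$, then inside that $\SL(2,L)\oplus 1_{n-2}$ the $L$-closure $r^{L}$ of the moved transvection is reached in $O(1)$ steps (it literally is a product of two transvections from the two transvection subgroups of that $\SL(2,L)$), then conjugate back. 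Each stage is a bounded product of elements of $X'$, so $\ell_{X'}(r^L)=O(1)$, uniformly in $r\in X$, giving $\ell_{X'}(X^L)=O(1)$ and hence $\ell_X(X^L)\le \ell_X(X')\cdot O(1)$, which is the claimed bound in both parts.

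The main obstacle I expect is the first step, not the closing-up: one must guarantee that a \emph{single} two-way directed edge whose weight generates $L$ (over $\FF p$, resp.\ over $L_0$) is available inside $X$ after only $O(1)$ extra length, so that \cref{thm:Dickson_for_subsets} can be invoked to cleanly produce $\SL(2,L)\oplus 1_{n-2}$ rather than some uncontrolled subgroup. The hypothesis $L=L_2(X)$ gives that $2$-cycle weights generate $L$, but a priori one needs several of them; the fix is that $L$ is a finite field, so it has a primitive element $\delta_0$, and one should argue that $\delta_0$ (or some generator of $L$ over $\FF p$) is realized as the weight of a two-way edge in $\Gamma(X^{(O(1))})$ — e.g.\ by combining two two-way edges via a conjugation $t_2t_1t_2^{-1}$ and reading off the new weight from \cref{lem:conj_tr}, exactly the computation already performed in the proof of \cref{lem:weights-twoway-edges} (the quantities $m_1,m_2,m_3,m_4,\delta$ there), which shows how to manufacture new $2$-cycle weights as explicit polynomial combinations of old ones. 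The secondary delicate point, relevant only to part \eqref{lem:M-closure-of-X-2}, is verifying that the diameter of $\Cay(\SL(2,L), \SL(2,L_0)\cup\{t\})$ is genuinely $O(|L:L_0|^c)$ and not merely $O((\log|L|)^c)$; this follows from \cref{cor:Babai-strong} because $|X|\ge|L_0|$ forces $\log|G|/\log|X| \le \log|\SL(2,L)|/\log|L_0| = O(|L:L_0|)$, but one has to be careful that the generating set one actually has access to in $\Gamma$ really contains (a conjugate of) all of $\SL(2,L_0)\oplus 1_{n-2}$, which is where the hypothesis ``$X$ contains a transvection subgroup $s_0^{L_0}$'' is used together with \cref{thm:Dickson_for_subsets} applied with $\mathrm{K}=L_0$.
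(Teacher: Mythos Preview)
Your overall architecture is right --- first manufacture a full transvection subgroup $s^{L}$ in short length, then ``spread'' the $L$-closure across $X$ via bounded two-way paths --- and this is exactly the paper's strategy. But the first step has a genuine gap, and the paper handles it differently.

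You assert that after passing to $X^{(O(1))}$ one may assume there is a \emph{single} two-way edge $(s,t)$ with $\FF p(w(s,t))=L$, invoking the primitive element theorem. This is not justified: the set $W=\{w(s,t):(s,t)\text{ a 2-cycle in }\Gamma(X)\}$ generates $L$ as a field, but $W$ need not contain a primitive element of $L$ (for instance $L=\FF{p^6}$ could be generated by one weight in $\FF{p^2}$ and one in $\FF{p^3}$, neither primitive). Your proposed fix, ``manufacture a primitive weight by conjugating two edges and reading off the new weight'', is not shown to terminate in $O(1)$ steps; the explicit polynomial combinations from \cref{lem:weights-twoway-edges} do not obviously produce a primitive element from arbitrary generators. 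The paper sidesteps this entirely by building a \emph{tower} $K_1<K_2<\cdots<K_m=L$: pick any $(s_1,t_1)$ with $K_1:=\FF p(w(s_1,t_1))\nleq\FF 9$, get $s_1^{K_1}$ via \cref{thm:Dickson_for_subsets} and \cref{cor:Babai-strong}; then find a nearest $(s_2,t_2)$ with $w(s_2,t_2)\notin K_1$, propagate $K_1$-closure to $s_2$ along a shortest two-way path (crucially, all intermediate $2$-cycle weights lie in $K_1$ by minimality), apply \cref{thm:Dickson_for_subsets} with $\mathrm K=K_1$ to get $s_2^{K_2}$ at cost $O(|K_2:K_1|^{c_0})$; iterate. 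Since $m\le\log\log|L|$ and $\prod|K_{i+1}:K_i|=|L:K_1|$, the total length multiplies out to $O((\log|L|)^c)$. Part~(2) is the same tower with seed $K_1=L_0$, $s_1=s_0$, giving $\prod O(|K_{i+1}:K_i|^{c_0})\cdot O(1)^m=O(|L:L_0|^c)$.

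Your closing-up step is also less clean than it needs to be. Conjugating $r$ by $s_0^{\lambda}$ does \emph{not} produce $r^{\lambda}$, as you note, and your ``conjugate twice'' workaround is vague. The paper's mechanism is simpler and uses nothing beyond what you already invoked: if $(s,r)$ is a two-way edge and $s^{L}$ is available, then $\langle s^{L},r\rangle\simeq\SL(2,L)\oplus 1_{n-2}$ by \cref{thm:Dickson_for_subsets} (here $\mathrm K=L$, $\Lambda=\{1\}$, $M=L$), and since the generating set has size $\ge|L|$, \cref{cor:Babai-strong} gives $\ell_{\{s^{L},r\}}(r^{L})\le\ell_{\{s^{L},r\}}(\SL(2,L))=O(\log|\SL(2,L)|/\log|L|)^{c_0}=O(1)$. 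Now walk along a two-way path of length $\le 6$ (property \ref{con-2waydiamXatmost6}) from $s_0$ to an arbitrary $t\in X$, extending the $L$-closure edge by edge.
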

\begin{proof}
  First, we show that if we could generate a transvection subgroup
  $s^L$ for some $s\in X$, then we can generate $X^L$ in length
  $\ell_{X\cup s^L}(X^L)=O(1)$.  Indeed, let us assume that such an
  $s^L$ is already generated and let $(s,r)$ be a two-way directed
  edge in $\Gamma(X)$.  Using \cref{thm:Dickson_for_subsets}, we know
  that $\fl s^L,r\fr\simeq \SL(2,L)\oplus 1_{n-2}$.  In
  particular, $r^{L}\leq \fl s^L,r\fr$.  By using
  \cref{cor:Babai-strong}, we get the following for some constant $c_0$.
  \begin{align*}
    \ell_{X\cup s^L}(r^{L})
    \leq \ell_{\{s^L,r\}}(s^{L})\leq
      \ell_{\{s^L,r\}}(\SL(2,L))
    =O\Big(\frac{\log|\SL(2,L)|}{\log|L|}\Big)^{c_0}=O(1).
  \end{align*}
  Now, for an arbitrary $t\in X$ let $s,r_1,r_2,\ldots r_k,t$ be a
  two-way directed path in $\Gamma(X)$ with $k\leq 5$. (such a path
  exists by property \ref{con-2waydiamXatmost6}.) Using the above
  argument repeatedly to the two-way edges
  $(s,r_1),\;(r_1,r_2),\;\ldots,(r_k,t)$, we get that
  $\ell_{X\cup s^L}(t^L)=O(1)$, as claimed.

  Now, we turn to the problem of generating a transvection subgroup
  over $L$.  If $|L|= 3$, the statement is trivial, so for the
  remainder we can assume that $|L| \neq 3, 9$. Then there is a
  $2$-cycle $(s_1,t_1)$ in $\Gamma(X)$ such that
  $\FF p(w(s_1,t_1))\nleq \FF 9$. Let $K_1:=\FF p(w(s_1,t_1))\leq L$.
  Using \cref{thm:Dickson_for_subsets} along with
  \cref{cor:Babai-strong}, we get that
  \begin{align*}
    \ell_{X}(s_1^{K_1})
    &\leq \ell_{\{s_1,t_1\}}(s_1^{K_1})\leq \ell_{\{s_1,t_1\}}(SL(2,K_1))\\
    &=O((\log_2|SL(2,K_1)|)^{c_0})=O((\log_2|K_1|)^{c_0})
  \end{align*}
  for some constant $c_0$. Now, if $K_1=L$, then we are done.

  Now assume that $K_1 < L$ and let $(s_2,t_2)$ be a two-way directed
  edge in $\Gamma(X)$ with $w(s_2,t_2)\notin K_1$ such that the
  distance of $s_1$ and $s_2$ is the smallest possible, and let
  $K_2:=K_1(w(s_2,t_2))$.  Using the same procedure as in the first
  paragraph to a shortest two-way path $s_1,r_1,\ldots,r_k,s_2$, we
  can generate $s_2^{K_1}$ in length
  $\ell_{X \cup s_1^{K_1}}(s_2^{K_1})=O(1)$.  (Note that
  $w(s_1,r_1),\;w(r_1,r_2),\;\ldots,w(r_k,s_2)$ are all elements of
  $K_1$ by our assumption.) Then
  $\fl s_2^{K_1},t_2\fr=\SL(2,K_2)\oplus 1_{n-2}\geq s_2^{K_2}$ by
  \cref{thm:Dickson_for_subsets} so, by using
  \cref{cor:Babai-strong}), we get that
  \[
    \ell_{X \cup s_1^{K_1}}(s_2^{K_2})\leq
    O(1)\cdot\ell_{\{s_2^{K_1},t_2\}}(s_2^{K_2}) \leq
    O\Big(\frac{\log|\SL(2,K_2)|}{\log|K_1|}\Big)^{c_0}
    =O(|K_2:K_1|^{c_0}).
  \]
  In general, if $K_i< L_2(X)$ and the $K_i$-closure of some $s_i\in X$ is
  already generated, then we choose $(s_{i+1},t_{i+1})$ in $\Gamma(X)$
  with $K_{i+1}:=K_i(w(s_{i+1},t_{i+1}))>K_i$ and we apply the above
  procedure to get $s_{i+1}^{K_{i+1}}$ in length
  $\ell_{X \cup s_i^{K_i}}(s_{i+1}^{K_{i+1}})=
  O(|K_{i+1}:K_i|^{c_0})$.  Finally, we get a strictly increasing
  chain of subfields $K_1<K_2<\ldots<K_m=L$ with
  $m\leq \log\log|L|$ and a $2$-cycle $(s_i,t_i)$ in $\Gamma(X)$ with
  $K_{i+1}=K_{i}(w(s_{i+1},t_{i+1}))$ for every $i<m$.
  Using the above procedure we can
  generate $s_m^L$ in length
  \begin{align*}
    \ell_X(s_m^{L})
    & = O((\log|K_1|)^{c_0})\prod_{i=1}^{m-1}O(|K_{i+1}:K_i|^{c_0}) \\
    & = O(\log|L|^{c_0})O(1)^{\log\log|L|}\leq O((\log|L|)^{c}).
  \end{align*}
  So the proof of the first claim is complete.

  Finally, the second claim follows by using essentially the same argument, but
  we do not use the second paragraph. Instead, we
  choose $s_1:=s_0$ and $K_1:=L_0$.
\end{proof}
Now, we are in the position to generate the $\FF {q_0}$-closure of
$X$ in short length over $X$ in many cases.
\begin{cor}\label{cor:M-closure-summary}
  Let $M=\FF{q_0}$ unless $G=SL(n,q)$ with $q$ perfect square, when
  let $M=\FF{\sqrt q}$. Then  $\ell_X(X^M)=O(\log(q)^c)$.
  Replacing $X$ with $X^M$ we can assume that
  \begin{enumerate}
    \myitem[(P8)]\label{con-M-closed-X}
    \begin{enumerate}
    \item If $G=\Sp(n,q)$ or $G=\SU(n,q)$ or $q$ is not a perfect square,
      then $X$ is $\FF{q_0}$-closed.
    \item If $G=\SL(n,q)$ and $q$ is a perfect square, then $X$ is $M$-closed
      with $M=\FF{\sqrt q}$ and $L_3(X)=\FF q$.
    \end{enumerate}
  \end{enumerate}
\end{cor}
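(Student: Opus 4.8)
This corollary is essentially a bookkeeping assembly of \cref{lem:weights-twoway-edges}, \cref{lem:SympUnit_M=Fq0} and \cref{lem:M-closure-of-X}, so the plan is mainly organisational. \textbf{First} I would apply \cref{lem:weights-twoway-edges} and replace $X$ by the set it produces; this costs $O(1)$ in length over the original $X$, and afterwards $L_3(X)=\FF q$ and $|\FF q:L_2(X)|\le 2$. Every property \ref{con-G}--\ref{con-2waydiamXatmost6} is hereditary under adjoining transvections, so all of them (and those obtained in the further bounded replacements below) survive. After this, the whole content of the corollary reduces to pinning down $L_2(X)$, checking it is not $\FF 9$, and invoking \cref{lem:M-closure-of-X-1}.

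\textbf{Second}, I would run the four-way case split to produce a subfield $L$ with $M\le L=L_2(X)\le\FF{q_0}$ and $L\ne\FF 9$. If $G=\Sp(n,q)$ then $q_0=q$ (so $M=\FF q$) and $q$ is odd, so \cref{lem:SympUnit_M=Fq0} lets me pass to $X^{(3)}$ and attain $L_2(X)=\FF q=M$; here $L\ne\FF 9$ since $q\ne 9$. If $G=\SU(n,q)$ then $q=q_0^2$ and $M=\FF{q_0}$, and \cref{lem:SympUnit_M=Fq0} gives $L_2(X)=\FF{q_0}=M$ directly; here $L\ne\FF 9$ since $q\ne 81$ forces $q_0\ne 9$. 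If $G=\SL(n,q)$ with $q$ not a perfect square then $q_0=q$, $M=\FF q$, and $\FF q$ has no subfield of index $2$, so the bound $|\FF q:L_2(X)|\le 2$ already forces $L_2(X)=\FF q=M$, with $9$ and $81$ excluded automatically. Finally, if $G=\SL(n,q)$ with $q$ a perfect square then $q_0=q$ and $M=\FF{\sqrt q}$, while $|\FF q:L_2(X)|\le 2$ means $L:=L_2(X)$ is either $\FF q$ or the unique index-$2$ subfield $\FF{\sqrt q}$; in both cases $M=\FF{\sqrt q}\le L\le\FF q=\FF{q_0}$, and $L\ne\FF 9$ by $q\ne 9$ when $L=\FF q$ and by $q\ne 81$ when $L=\FF{\sqrt q}$.

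\textbf{Third}, with $L=L_2(X)\ne\FF 9$ in hand, \cref{lem:M-closure-of-X-1} gives $\ell_X(X^L)=O((\log|L|)^c)=O((\log q)^c)$; composing with the $O(1)$-length replacements this bounds the length of $X^L$ over the original generating set by $O((\log q)^c)$, and since $M\le L$ we have $X^M\subseteq X^L$, whence $\ell_X(X^M)=O((\log q)^c)$ as claimed. I would then replace $X$ by $X^L$: it is $L$-closed, which is exactly $\FF{q_0}$-closedness in the symplectic and unitary cases and in the linear case with $q$ not a perfect square (there $L=\FF{q_0}$), giving \ref{con-M-closed-X}(a); in the linear case with $q$ a perfect square it is $M=\FF{\sqrt q}$-closed, and since $X^L$ contains the set produced in the first step it retains $L_3=\FF q$, giving \ref{con-M-closed-X}(b). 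All hereditary properties persist.

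I do not expect a genuine obstacle: the real work has already been done in \cref{lem:M-closure-of-X} (which rests on \cref{thm:Dickson_for_subsets}) and in \cref{lem:SympUnit_M=Fq0}. The only delicate point is the bookkeeping around the exceptional field $\FF 9$ --- one must check in each of the four cases that the subfield fed to \cref{lem:M-closure-of-X-1} is really not $\FF 9$, and this is exactly where the hypotheses $q\ne 9$ (for $\Sp$ and $\SL$) and $q\ne 81$ (for $\SU$, and for $\SL$ when $q$ is a square and $L=\FF{\sqrt q}$) are consumed.
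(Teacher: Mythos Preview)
Your proposal is correct and follows essentially the same route as the paper: apply \cref{lem:weights-twoway-edges}, then use \cref{lem:SympUnit_M=Fq0} (for $\Sp$ and $\SU$) or the elementary observation that $\FF q$ has no index-$2$ subfield (for $\SL$ with $q$ not a square) to pin down $L_2(X)$, and finally invoke \cref{lem:M-closure-of-X-1}. The paper's own proof is a terse two sentences stating exactly this; you have merely made the case split and the $\FF 9$-exclusion bookkeeping explicit, which the paper leaves implicit.
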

\begin{proof}
  By \cref{lem:weights-twoway-edges,lem:SympUnit_M=Fq0}, we can assume
  that $L_2(X)=\FF{q_0}$ unless $G=SL(n,q)$ with $q$ perfect square
  when we can assume that $L_2(X) \geq \FF{\sqrt q}$ and
  $L_3(X)=\FF q$.  Now, application of \cref{lem:M-closure-of-X-1}
  gives the result.
\end{proof}

\subsection{Gluing triangles}\label{sec:gluing}
The ultimate goal of this section is to finish the proof of
\cref{thm:main} for the case when $G=\SL(V)$.
In view of the previous sections, we
can assume that $X$ satisfies properties \ref{con-G}--\ref{con-2waydiamXatmost6}
and \ref{con-M-closed-X}/(b). Therefore,
$X$ is $M$-closed for $M:=\FF{\sqrt q}$ and
$X$ contains a triangle whose weight is not in $M$.
By our assumptions on $q$, we have $|M|\geq 5$.

The main difficulty we face now is to construct a
$2$-cycle $(s,t)$ in short length over $X$ such that $w(s,t)\notin M$.
In fact, we will be able to generate such a $2$-cycle in length
$\ell_X(s,t)=O(1)$. To prove this, we will use the concept of non-unitary
cycles (see \cref{defin:ds-du}).

We highlight that in the foregoing discussion, a large part of our
calculations remain valid only until the weight of each $2$-cycle of
the examined parts of the transvection graph are inside $M$, that is,
until each such $2$-cycle is unitary (see \cref{rem:ds-du-2}). Since
our primary goal is to construct a non-unitary $2$-cycle, it does not
cause any problem, if we tacitly assume that during our proof all
intermediate two-way directed edges are unitary $2$-cycles.

By \cref{thm:structure-from-cycles}, $\Gamma(X)$ must contain a
non-unitary cycle. Previously (see
\cref{lem:short-cycles-gen-Fq,lem:weights-twoway-edges}) we have seen
how to construct a triangle of bounded length over $X$ whose weight is
outside $M$. Using a similar argument, we can also construct a
non-unitary triangle of bounded length over $X$.
\begin{lem}
  We can assume that $X$ contains a non-unitary triangle.
\end{lem}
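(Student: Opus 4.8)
The plan is to mimic the "short-cycle" arguments of \cref{lem:short-cycles-gen-Fq} and \cref{lem:weights-twoway-edges}, but tracking the parameter $d_u$ rather than the weight. By \cref{thm:structure-from-cycles} and property \ref{con-nonunit}, $\Gamma(X)$ contains a non-unitary cycle; among all non-unitary cycles in $\Gamma(X^{(N)})$ for $N=O(1)$ (where at each stage we enlarge $X$ by conjugating with bounded length), pick one of minimal length $k$. First I would argue $k$ is bounded. If $(s,t)$ is any two-way edge appearing inside a longer cycle it is unitary by the running assumption (the intermediate $2$-cycles are unitary, i.e. their weights lie in $M=\FF{\sqrt q}$), so the gluing identity for $P_u$ from the proof of \cref{thm:structure-from-cycles} (equation \eqref{eq:gluing}) applies. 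Exactly as in \cref{lem:short-cycles-gen-Fq}, if the minimal non-unitary cycle $(r_1,\ldots,r_k)$ had length $k\geq 6$, then choosing $i=1$, $j=4$, one of the two arcs $[r_1,r_2,r_3,r_4]$ or $[r_4,\ldots,r_1]$ is a shortest path; but both have length $\geq 3$, contradicting $\diam(\Gamma(X))\leq 2$ from \ref{con-diamXatmost2}. So $k\leq 5$.

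Next I would reduce $k$ to $3$. Suppose $k\in\{4,5\}$ and $(r_1,\ldots,r_k)$ is a minimal non-unitary cycle. By the first paragraph of the proof of \cref{lem:short-cycles-gen-Fq} applied with $(i,j)=(1,3)$, we may assume $[r_1,r_3]\notin E(X)$ (otherwise $k=4$ and $[r_3,r_1]\notin E(X)$, a symmetric situation handled the same way). Now imitate \cref{lem:weights-twoway-edges}: conjugate to form $r_k r_{k-1} r_k^{-1}\in X^{(3)}$ and compute $d_u$ of the resulting shorter tuple $(r_1,\ldots,r_{k-2},r_kr_{k-1}r_k^{-1})$. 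Using $b=\phi_k(u_{k-2})=0$ (justified because $[r_1,r_3]\notin E(X)$ forces the relevant "chord" label to vanish, just as in \cref{lem:short-cycles-gen-Fq}(2)) the weight identity $w(r_1,\ldots,r_{k-2},r_kr_{k-1}r_k^{-1})=w(r_1,\ldots,r_k)+w(r_1,\ldots,r_{k-1})$ holds, and an analogous identity holds for the reverse weight, hence for $d_u$. Since $(r_1,\ldots,r_{k-1})$ is a shorter cycle it is unitary by minimality, i.e. $d_u(r_1,\ldots,r_{k-1})=0$, so $d_u$ of the new $(k-1)$-tuple equals $d_u(r_1,\ldots,r_k)\neq 0$: we have produced a non-unitary cycle of length $k-1$ in $\Gamma(X^{(3)})$, contradicting minimality. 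Iterating, we reach a non-unitary triangle in $\Gamma(X^{(9)})$, and replacing $X$ by $X^{(9)}$ (legal since $\ell_X(X^{(9)})=O(1)$) gives the claim.

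The main obstacle I anticipate is bookkeeping the behaviour of $d_u$ (which involves the $\sqrt q$-power of the reverse weight) under the conjugation step, together with making sure the "intermediate two-way edges are unitary" convention is actually consistent here — i.e. that the conjugations we perform do not themselves create a non-unitary $2$-cycle prematurely. If they did, we would already be done (that $2$-cycle would be the non-unitary triangle's degenerate analogue, and one more conjugation step turns a non-unitary $2$-cycle into a non-unitary triangle trivially), so this apparent difficulty is actually benign: either the intermediate edges stay unitary and the gluing/weight identities go through verbatim, or we stumble onto a non-unitary short cycle even faster. A secondary point to check carefully is the vanishing of the chord label $b$, which relies on minimality of $k$ exactly as in the proof of \cref{lem:short-cycles-gen-Fq}(2); since $d_u\neq 0$ in particular forces $w\neq 0$ for one of the two directed traversals, the cycle really is a genuine cycle in $\Gamma(X)$ (in at least one direction), so that lemma's combinatorial input is available.
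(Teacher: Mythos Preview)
Your approach is along the right lines and close to the paper's, but you are missing the paper's key preliminary observation, and without it two of your steps have genuine (though easily patched) gaps. The paper's first move is: if $\Gamma(X)$ contains \emph{any} one-way directed edge $[r,s]$, then by \ref{con-diamXatmost2} there is $t\in X$ with $[s,t],[t,r]\in E(X)$, and the triangle $(r,s,t)$ is automatically non-unitary since its reverse weight vanishes. So one may assume every edge of $\Gamma(X)$ is two-way directed. You do not make this reduction, and it bites you twice. First, the gluing identity for $P_u$ that you invoke to bound $k\le 5$ and to get a missing chord is only defined when all cycles involved are two-way directed; your remark about ``intermediate two-way edges being unitary'' addresses the weight condition $w(s,t)\in M$ but not the existence of the reverse edge. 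Second, and more concretely, your ``analogous identity for the reverse weight'' requires not only $b=\phi_k(u_{k-2})=0$ but also $b':=\phi_{k-2}(u_k)=0$: with $s=r_kr_{k-1}r_k^{-1}$ one computes $w(s,r_{k-2},\ldots,r_1)=(a'_{k-2}+a_{k-1}b')(c'-d\,a'_k)\prod a'_j$, and only when $b'=0$ does this reduce to $w(r_{k-1},\ldots,r_1)-w(r_k,\ldots,r_1)$, yielding (after combining with the forward identity) $d_u(r_1,\ldots,r_{k-2},s)=d_u(r_1,\ldots,r_k)+d_u(r_1,\ldots,r_{k-1})$. Your chord argument only gives $b=0$, i.e.\ $[r_{k-2},r_k]\notin E$; you need $[r_k,r_{k-2}]\notin E$ as well. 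Once all edges are two-way this is automatic (and indeed the minimal non-unitary cycle is then chordless), so the fix is exactly the paper's first paragraph. Also note your indexing: $b=0$ corresponds to the chord $[r_{k-2},r_k]$, not $[r_1,r_3]$; you should cyclically relabel so that the missing chord is in the right position.

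Apart from this, your reduction step differs slightly from the paper's: after forming $s=r_kr_{k-1}r_k^{-1}$, the paper observes that $s$ is joined by two-way edges to $r_1,r_{k-2},r_{k-1},r_k$, and then expresses $P_u(r_1,\ldots,r_k)$ via iterated gluing as $P_u(r_1,\ldots,r_{k-2},s)\cdot P_u(r_1,s,r_k)\cdot P_u(r_k,s,r_{k-1})\cdot P_u(r_{k-1},s,r_{k-2})$, concluding that one of these four shorter cycles is non-unitary. Your direct $d_u$-identity is a legitimate (indeed slightly sharper) alternative once $b=b'=0$ is secured, since it pins down the non-unitary shorter cycle as $(r_1,\ldots,r_{k-2},s)$ itself.
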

\begin{proof}
  First, if $X$ contains a one-way directed edge $[r,s]\in E(X)$, then
  by property \ref{con-diamXatmost2}
  there is a $t\in X$ such that $(r,s,t)$ is a one-way
  directed cycle, which is non-unitary by
  \cref{rem:ds-du-4}. Therefore, for the remainder of this proof we
  can assume that every edge in $\Gamma(X)$ is two-way directed.

  Let $(r_1,r_2,\ldots,r_k)$ be a non-unitary cycle of minimal length
  in $\Gamma(X)$. (Such a cycle exists by property \ref{con-nonunit}.)
  By our assumption, every $2$-cycle is unitary in $\Gamma(X)$, so we
  can use the gluing property of $P_u$ (see \cref{eq:gluing}), to
  conclude that $(r_1,r_2,\ldots,r_k)$ cannot be obtained by gluing
  two shorter (and, therefore, unitary) cycles of $\Gamma(X)$.

  Therefore, we get that $(r_1,r_2,\ldots,r_k)$ must be  a chordless
  cycle of length $k\leq 5$, where ``chordless'' means that
  $[r_i,r_j]$ is an edge if only if $i-j\equiv \pm 1 \pmod k$.
  If $k>3$, then let $s:=r_kr_{k-1}r_k^{-1}$. Now,
  $(r_1,r_2,\ldots,r_{k-2},s)$ is a (two-way)
  directed cycle of length $k-1$. Furthermore, $s$ is
  connected by both $r_k$ and $r_{k-1}$ with a two-way edge.
  Using the gluing property we have
  \begin{align*}P_u(r_1,\ldots,r_k)
    &=P_u(r_1,r_2,\ldots,r_{k-2},s)\cdot P_u(r_1,s,r_k)\\
    &\cdot P_u(r_k,s,r_ {k-1})\cdot P_u(r_{k-1},s,r_{k-2 }).
  \end{align*}

  Thus, we get a non-unitary cycle of length at most $k-1\leq 4$.  If
  we still do not have a non-unitary triangle, we use the
  above argument again for a non-unitary $4$-cycle. So, we conclude that
  $X^{(9)}$ surely contains  a non-unitary triangle.
\end{proof}
For the remainder of this section,
if $s_1,s_2,s_3$, and so on, are transvections, we use the usual notation
$s_i=1+u_i \otimes \phi_i$.
Recall that, by \cref{lem:conj_tr}
\[
  s_i^{s_j} = s_j^{-1}s_is_j = 1+(u_i-\phi_j(u_i)u_j)
  \otimes (\phi_i+\phi_i(u_j)\phi_j).
\]

\begin{lem}\label{lem:conjiso}
    For any $g\in G$, the conjugation by $g$ defines a graph isomorphism on
    $\Gamma(\mt)$. Moreover,
    $w(r_1,r_2,\ldots,r_k)=w(r_1^g,r_2^g,\ldots,r_k^g)$ for any
    $r_1,r_2,\ldots,r_k\in \mt$.
  \end{lem}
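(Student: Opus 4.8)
The plan is to track how conjugation by $g$ transforms the two data defining a transvection — the direction vector and the defining functional — and then to observe that the labels of $\Gamma(\mt)$ are obtained from these data by a pairing that is invariant under simultaneous conjugation.

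First I would record the effect of conjugation on a transvection $s = 1 + u \otimes \phi$. For every $x \in V$ one has $s^g(x) = g^{-1}\big(s(gx)\big) = g^{-1}\big(gx + \phi(gx)u\big) = x + \phi(gx)\,g^{-1}u$, so
\[
  s^g = 1 + (g^{-1}u)\otimes(\phi\circ g).
\]
This is again a transvection: the required compatibility $(\phi\circ g)(g^{-1}u) = \phi(u) = 0$ holds. Since conjugates of transvections are transvections and conjugation by $g\in G$ keeps them inside $G$, the map $r \mapsto r^g$ is a bijection of $\mt$ onto itself, with inverse $r \mapsto r^{g^{-1}}$; in particular it is a bijection on the vertex set of $\Gamma(\mt)$.

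Next I would check that this bijection preserves all labels, and hence all edges. With the representatives produced above, namely $u_{s^g} := g^{-1}u_s$ and $\phi_{s^g} := \phi_s\circ g$ for each $s$, the label of $(s^g, t^g)$ is
\[
  l(s^g,t^g) = \phi_{t^g}(u_{s^g}) = (\phi_t\circ g)(g^{-1}u_s) = \phi_t(u_s) = l(s,t).
\]
A pair $[s,t]$ is an edge of $\Gamma(\mt)$ exactly when $\phi_t(u_s)\neq 0$, a condition that does not depend on the chosen representatives; so conjugation by $g$ sends edges to edges and non-edges to non-edges. Together with the previous paragraph, this shows that $r\mapsto r^g$ is a graph automorphism of $\Gamma(\mt)$.

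Finally, the weight identity is immediate: $w(r_1,\ldots,r_k)$ is the product $l(r_1,r_2)l(r_2,r_3)\cdots l(r_k,r_1)$, each factor of which is unchanged by the computation above, and (as already noted in the paper) $w$ depends only on the transvections and not on the choice of representatives, so $w(r_1,\ldots,r_k) = w(r_1^g,\ldots,r_k^g)$ holds with no ambiguity. I do not expect any real obstacle here; the only point needing a moment's care is to keep the representative-dependent labels separate from the representative-independent edge relation and weight, which the displayed computations do cleanly.
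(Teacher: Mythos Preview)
Your proof is correct and follows exactly the same approach as the paper: both rest on the identity $r^g = 1 + g^{-1}(u)\otimes(\phi\circ g)$ for $r = 1 + u\otimes\phi$, from which preservation of labels, edges, and weights is immediate. The paper's proof is a one-line sketch citing this identity; you have simply spelled out the routine verifications.
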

  \begin{proof}
    Both claims can be easily proved using the fact that if
    $r=1+u\otimes\phi$ is a transvection, then
    $r^g = g^{-1} r g = 1+g^{-1}(u)\otimes(\phi\circ g)$.
\end{proof}
\begin{lem} \label{lem:red1}
  Let $s_1,s_2,s_3,s_4,s_5$ be transvections such that the following happens.
  \[\begin{tikzpicture}
    \draw[<->,thick] (0.3,0) -- (0.7,0);
    \draw[<->,thick] (1.3,0) -- (1.7,0);
    \draw[->,thick] (2.3,0.3) -- (2.7,0.7);
    \draw[->,thick] (3.3,0.7) -- (3.7,0.3);
    \draw[->,thick] (3.7,0) -- (2.3,0);
    \draw (0,0) node {$s_1$};
    \draw (1,0) node {$s_2$};
    \draw (2,0) node {$s_3$};
    \draw (3,1) node {$s_4$};
    \draw (4,0) node {$s_5$};
  \end{tikzpicture}\]
  Assume that $[s_1,s_3]$ is not a double edge. If
  $[s_1,s_3]$ is a single edge, then the triangle $(s_1,s_3,s_2)$ is
  non-unitary. If $[s_3,s_1]$ is a single edge then the triangle
  $(s_1,s_2,s_3)$ is non-unitary. Assume now that $[s_1,s_3]$ and
  $[s_3,s_1]$ are not edges. Then the following happens.
  \[\begin{tikzpicture}
    \draw[<->,thick] (-0.3,0) -- (0.5,0);
    \draw[->,thick] (1.3,0.3) -- (1.7,0.7);
    \draw[->,thick] (2.3,0.7) -- (2.7,0.3);
    \draw[->,thick] (2.6,0) -- (1.4,0);
    \draw (-0.5,0) node {$s_1$};
    \draw (1,0) node {$s_3^{s_2}$};
    \draw (2,1) node {$s_4^{s_2}$};
    \draw (3,0) node {$s_5^{s_2}$};
  \end{tikzpicture}\]
\end{lem}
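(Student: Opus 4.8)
The plan is to unwind the hypotheses into inequalities among the labels $l(s_i,s_j)=\phi_j(u_i)$, where $s_i=1+u_i\otimes\phi_i$, and then to treat the three possibilities for the non-double edge $[s_1,s_3]$ separately. The two double edges $[s_1,s_2]$ and $[s_2,s_3]$ give the four inequalities $\phi_2(u_1)\neq 0$, $\phi_1(u_2)\neq 0$, $\phi_3(u_2)\neq 0$, $\phi_2(u_3)\neq 0$, while the hypothesis that $(s_3,s_4,s_5)$ is a one-way directed cycle makes that cycle non-unitary by \cref{rem:ds-du-4}. These, together with the conjugation formula \cref{lem:conj_tr} and the fact that conjugation by $s_2\in G$ is a graph automorphism of $\Gamma(\mt)$ (\cref{lem:conjiso}), are the only ingredients.

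First I would dispose of the case in which $[s_1,s_3]$ is a single edge directed from $s_1$ to $s_3$, so $\phi_3(u_1)\neq 0$ and $\phi_1(u_3)=0$. Then the labels $l(s_1,s_3)=\phi_3(u_1)$, $l(s_3,s_2)=\phi_2(u_3)$ and $l(s_2,s_1)=\phi_1(u_2)$ are all nonzero, so $(s_1,s_3,s_2)$ is a cycle, whereas its reverse $(s_2,s_3,s_1)$ is not, since $l(s_3,s_1)=\phi_1(u_3)=0$. Thus $(s_1,s_3,s_2)$ is a one-way directed cycle, hence non-unitary by \cref{rem:ds-du-4}. The case in which $[s_1,s_3]$ is a single edge directed from $s_3$ to $s_1$ is symmetric: here $\phi_1(u_3)\neq 0$ and $\phi_3(u_1)=0$, the triple $(s_1,s_2,s_3)$ is a cycle, its reverse $(s_3,s_2,s_1)$ fails because $l(s_1,s_3)=\phi_3(u_1)=0$, and \cref{rem:ds-du-4} again applies.

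The remaining case, in which neither $[s_1,s_3]$ nor $[s_3,s_1]$ is an edge, that is $\phi_3(u_1)=\phi_1(u_3)=0$, is the only one with content. By \cref{lem:conj_tr} one has $s_3^{s_2}=1+(u_3-\phi_2(u_3)u_2)\otimes(\phi_3+\phi_3(u_2)\phi_2)$, so substituting and using the two vanishings,
\begin{align*}
  l(s_1,s_3^{s_2}) &= \phi_3(u_1)+\phi_3(u_2)\phi_2(u_1) = \phi_3(u_2)\phi_2(u_1)\neq 0,\\
  l(s_3^{s_2},s_1) &= \phi_1(u_3)-\phi_2(u_3)\phi_1(u_2) = -\phi_2(u_3)\phi_1(u_2)\neq 0,
\end{align*}
the non-vanishing being exactly the four inequalities recorded at the start. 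Hence $[s_1,s_3^{s_2}]$ is a double edge. Since conjugation by $s_2$ is a graph automorphism of $\Gamma(\mt)$, the triple $(s_3^{s_2},s_4^{s_2},s_5^{s_2})$ remains a one-way directed triangle. These two facts together are precisely the configuration drawn in the second picture, so the lemma follows.

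The argument is essentially pure bookkeeping with labels; the only idea is to conjugate the one-way triangle $(s_3,s_4,s_5)$ by $s_2$ — forced because $s_2$ is the vertex doubly joined to both $s_1$ and $s_3$ — while the vanishings $\phi_3(u_1)=\phi_1(u_3)=0$ are exactly what make the labels between $s_1$ and $s_3^{s_2}$ come out nonzero. I do not expect any real obstacle.
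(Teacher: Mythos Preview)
Your proof is correct and follows essentially the same approach as the paper's: dispatch the single-edge cases via \cref{rem:ds-du-4}, then in the main case compute the two labels between $s_1$ and $s_3^{s_2}$ from the conjugation formula and invoke \cref{lem:conjiso} to carry the triangle along. Your write-up is simply more explicit about the single-edge cases than the paper's one-line dismissal.
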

\begin{proof}
  If one of $[s_1,s_3]$ and $[s_3,s_1]$ is a single edge then the
  result follows from \cref{rem:ds-du-4}. We have
  $s_3^{s_2} = 1+(u_3-\phi_2(u_3)u_2) \otimes
  (\phi_3+\phi_3(u_2)\phi_2)$, hence
  \begin{align*}
    (\phi_3+\phi_3(u_2) \phi_2)(u_1) =
                          \phi_3(u_2) \phi_2(u_1) \neq 0,\\
    \phi_1(u_3-\phi_2(u_3) u_2) =
                          -\phi_2(u_3) \phi_1(u_2) \neq 0.
  \end{align*}
  Therefore $(s_1,s_3^{s_2})$ is a double edge.
  By \cref{lem:conjiso}, we are done.
\end{proof}
\begin{lem} \label{lem:abc-acb}
  If $s_1,s_2,s_3$ are transvections then
  \[w(s_1,s_2,s_3)-w(s_1,s_3,s_2) =
    w(s_1,s_3)-w(s_1,s_2)w(s_2,s_3)-w(s_1^{s_2},s_3).\]
\end{lem}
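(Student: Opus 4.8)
The plan is to prove the identity by a direct expansion of every term into the labels $l(s_i,s_j)=\phi_j(u_i)$ and to observe that all but the two ``wanted'' monomials cancel. Write $s_i=1+u_i\otimes\phi_i$ as usual and introduce the shorthand $A:=\phi_2(u_1)$, $B:=\phi_3(u_2)$, $C:=\phi_1(u_3)$, $A':=\phi_1(u_2)$, $B':=\phi_2(u_3)$, $C':=\phi_3(u_1)$. Unwinding the definition of the weight of a cycle gives $w(s_1,s_2,s_3)=ABC$, $w(s_1,s_3,s_2)=A'B'C'$, $w(s_1,s_3)=CC'$, $w(s_1,s_2)=AA'$ and $w(s_2,s_3)=BB'$; in particular the left-hand side of the claimed identity equals $ABC-A'B'C'$ and the product term on the right-hand side equals $AA'BB'$.

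Next I would compute $w(s_1^{s_2},s_3)$. Using \cref{lem:conj_tr} in the form $s_1^{s_2}=s_2^{-1}s_1s_2=1+(u_1-\phi_2(u_1)u_2)\otimes(\phi_1+\phi_1(u_2)\phi_2)$ recorded just above \cref{lem:conjiso}, its two labels with $s_3$ are
\[
  l(s_1^{s_2},s_3)=\phi_3\bigl(u_1-Au_2\bigr)=C'-AB,\qquad
  l(s_3,s_1^{s_2})=(\phi_1+A'\phi_2)(u_3)=C+A'B',
\]
so $w(s_1^{s_2},s_3)=(C'-AB)(C+A'B')=CC'+A'B'C'-ABC-AA'BB'$. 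Substituting this and the values of the previous paragraph into the right-hand side yields
\[
  w(s_1,s_3)-w(s_1,s_2)w(s_2,s_3)-w(s_1^{s_2},s_3)
  =CC'-AA'BB'-\bigl(CC'+A'B'C'-ABC-AA'BB'\bigr),
\]
and the four terms $CC'$ and $AA'BB'$ cancel, leaving exactly $ABC-A'B'C'$, which is the left-hand side.

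The argument is entirely routine; the only place that needs care is reading the conjugate off \cref{lem:conj_tr} with the correct convention $s_1^{s_2}=s_2^{-1}s_1s_2$, since an erroneous sign in the $\phi_2$-terms would corrupt both $C'-AB$ and $C+A'B'$ and destroy the cancellation. Once these two labels are in hand, expanding $w(s_1^{s_2},s_3)$ into four monomials and matching them against $CC'$, $AA'BB'$, $ABC$, $A'B'C'$ on the right is immediate. I note in passing that the identity is a purely formal relation in the four scalars $\phi_j(u_i)$ and uses neither $\phi_i(u_i)=0$ nor any bilinear form, so it holds for transvections in $\GL(V)$ over an arbitrary field.
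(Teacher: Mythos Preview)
Your proof is correct and follows essentially the same approach as the paper's own proof: both compute $w(s_1^{s_2},s_3)$ by expanding the conjugate $s_1^{s_2}=1+(u_1-\phi_2(u_1)u_2)\otimes(\phi_1+\phi_1(u_2)\phi_2)$ and then rearrange. The only cosmetic difference is that you introduce the shorthand $A,B,C,A',B',C'$ for the six labels, whereas the paper identifies the four cross-terms of the expansion directly as $w(s_1,s_3)$, $w(s_1,s_3,s_2)$, $-w(s_1,s_2,s_3)$ and $-w(s_1,s_2)w(s_2,s_3)$.
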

\begin{proof}
  Note that
  $s_1^{s_2} = 1+(u_1-\phi_2(u_1)u_2) \otimes (\phi_1+\phi_1(u_2) \phi_2)$, so
  \begin{align*}
    w(s_1^{s_2},s_3)
    & = (\phi_3(u_1)-\phi_2(u_1)\phi_3(u_2))
      \cdot (\phi_1(u_3)+\phi_1(u_2)\phi_2(u_3)) \\
    & = w(s_1,s_3)+w(s_1,s_3,s_2)-w(s_1,s_2,s_3)-w(s_1,s_2)w(s_2,s_3).
  \end{align*}
  The result follows.
\end{proof}
\begin{rem}\label{rem:abc-acb}
  \cref{lem:abc-acb} implies that whenever we have a triangle
  $(s_1,s_2,s_3)$ such that $w(s_1,s_2,s_3)\notin M$ but
  $w(s_1,s_3,s_2)\in M$, then there are $s,t\in\mt$ with
  $w(s,t)\notin M$ and $\ell_X(s,t)\leq 3$, which is our main goal.
  So, for the rest of this section we can assume that whenever
  $w(s_1,s_2,s_3)\notin M$ for some already generated
  $s_1,s_2,s_3\in\mt$, then $w(s_1,s_3,s_2)$ is also not in $M$.
\end{rem}
The following lemma will be used many times in this section without referring
to it.

\begin{lem} \label{lem:af}
  Let $s_1,s_2,s_3,s_4 \in Y$, $\lambda \in M$ and let
  \begin{align*}
    A & := w(s_1,s_2,s_3), \\
    B & :=w(s_1,s_2,s_4,s_3)-w(s_1,s_2,s_3,s_4), \\
    C & :=-w(s_3,s_4) \cdot w(s_1,s_2,s_4), \\
    D & :=d_u(s_1,s_2,s_3), \\
    E & :=w(s_1,s_2,s_4,s_3)-w(s_1,s_2,s_3,s_4)+
        w(s_2,s_1,s_4,s_3)^{\sqrt{q}}-w(s_2,s_1,s_3,s_4)^{\sqrt{q}}, \\
    F & :=-w(s_3,s_4) \cdot d_u(s_1,s_2,s_4).
  \end{align*}
  Then
  \[w(s_1,s_2,s_3^{s_4^{\lambda}})=A+\lambda B+\lambda^2 C,\]
  \[d_u(s_1,s_2,s_3^{s_4^{\lambda}}) = D+ \lambda E + \lambda^2 F\]
  and we have the following.
  \begin{enumerate}
  \item If either $A \in M$, $C \not \in M$ or $A \not \in M$,
    $C \in M$ then for every $\lambda \in M^{\times}$ at least one
    of $w(s_1,s_2,s_3^{s_4^{\lambda}})$,
    $w(s_1,s_2,s_3^{s_4^{-\lambda}})$ does not belong to $M$.
  \item If at least one of $D$, $E$, $F$ is nonzero then there are at
    most two values of $\lambda$ for which
    $d_u(s_1,s_2,s_3^{s_4^{\lambda}})=0$.
  \end{enumerate}
\end{lem}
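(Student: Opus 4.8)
The plan is a direct computation. First I would substitute the explicit form of $s_3^{s_4^{\lambda}}$ into the two expressions $w(s_1,s_2,-)$ and $d_u(s_1,s_2,-)$, expand everything as a polynomial in $\lambda$, and match the coefficients against $A,\dots,F$; the two stated consequences then follow immediately from elementary facts about fields.

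By \cref{lem:conj_tr} applied to $s_3$ and $s_4^{\lambda}=1+\lambda u_4\otimes\phi_4$ one has $s_3^{s_4^{\lambda}}=1+(u_3-\lambda\phi_4(u_3)u_4)\otimes(\phi_3+\lambda\phi_3(u_4)\phi_4)$, hence
\[
  w(s_1,s_2,s_3^{s_4^{\lambda}})=\phi_2(u_1)\cdot\bigl(\phi_3(u_2)+\lambda\phi_3(u_4)\phi_4(u_2)\bigr)\cdot\bigl(\phi_1(u_3)-\lambda\phi_4(u_3)\phi_1(u_4)\bigr).
\]
Expanding, the $\lambda^0$-term is $\phi_2(u_1)\phi_3(u_2)\phi_1(u_3)=w(s_1,s_2,s_3)=A$; the $\lambda^1$-term equals $\phi_2(u_1)\phi_4(u_2)\phi_3(u_4)\phi_1(u_3)-\phi_2(u_1)\phi_3(u_2)\phi_4(u_3)\phi_1(u_4)$, which I would read off as $w(s_1,s_2,s_4,s_3)-w(s_1,s_2,s_3,s_4)=B$; and the $\lambda^2$-term factors as $-\bigl(\phi_4(u_3)\phi_3(u_4)\bigr)\bigl(\phi_2(u_1)\phi_4(u_2)\phi_1(u_4)\bigr)=-w(s_3,s_4)\,w(s_1,s_2,s_4)=C$. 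This gives the first identity. For $d_u$ I would use that $d_u(s_1,s_2,s_3^{s_4^{\lambda}})=w(s_1,s_2,s_3^{s_4^{\lambda}})+w(s_3^{s_4^{\lambda}},s_2,s_1)^{\sqrt q}$ (the $k=3$ case of \cref{defin:ds-du}) and run the same expansion on the reversed triangle, obtaining
\[
  w(s_3^{s_4^{\lambda}},s_2,s_1)=w(s_3,s_2,s_1)+\lambda\bigl(w(s_2,s_1,s_4,s_3)-w(s_2,s_1,s_3,s_4)\bigr)-\lambda^2\,w(s_3,s_4)\,w(s_4,s_2,s_1).
\]
Since $\lambda\in M=\FF{\sqrt q}$ we have $\lambda^{\sqrt q}=\lambda$, so applying the Frobenius $x\mapsto x^{\sqrt q}$ term by term and adding the two polynomials produces the constant term $w(s_1,s_2,s_3)+w(s_3,s_2,s_1)^{\sqrt q}=D$, the $\lambda$-coefficient $E$, and the $\lambda^2$-coefficient $-w(s_3,s_4)w(s_1,s_2,s_4)-w(s_3,s_4)^{\sqrt q}w(s_4,s_2,s_1)^{\sqrt q}$. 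To turn this last term into $F=-w(s_3,s_4)\,d_u(s_1,s_2,s_4)$ I need $w(s_3,s_4)^{\sqrt q}=w(s_3,s_4)$, i.e.\ $w(s_3,s_4)\in M$. This costs nothing: if $(s_3,s_4)$ is not a two-way edge then $w(s_3,s_4)=0$, and otherwise we may invoke the standing convention of this section that all intermediate two-way directed edges are unitary — if $(s_3,s_4)$ happens not to be unitary, then by \cref{rem:ds-du-2} it is itself a non-unitary $2$-cycle and there is nothing left to prove. Hence $d_u(s_1,s_2,s_3^{s_4^{\lambda}})=D+\lambda E+\lambda^2F$.

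Granting the two polynomial identities, both consequences are short. For (1) I would add the weights for $\lambda$ and $-\lambda$: $w(s_1,s_2,s_3^{s_4^{\lambda}})+w(s_1,s_2,s_3^{s_4^{-\lambda}})=2(A+\lambda^2C)$; since $\kar\FF q\neq 2$, if both summands lay in $M$ then $A+\lambda^2C\in M$. But $\lambda^2\in M^\times$, so $\lambda^2C\in M$ if and only if $C\in M$, and in each of the two hypothesized cases exactly one of $A,\lambda^2C$ lies in $M$; hence $A+\lambda^2C\notin M$, a contradiction. For (2), if one of $D,E,F$ is nonzero then $\lambda\mapsto D+\lambda E+\lambda^2F$ is a nonzero polynomial of degree at most $2$ over the field $M$, hence has at most two roots. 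The only step that requires care — and the step I expect to be the main obstacle, though it is purely mechanical — is the coefficient bookkeeping in the two expansions: recognizing each product $\phi_i(u_j)\phi_j(u_k)\cdots$ as the weight of the correct $3$- or $4$-cycle, keeping the cyclic orderings straight and tracking the sign $(-1)^{k+1}$ that enters $d_u$.
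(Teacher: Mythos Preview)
Your proof is correct and follows essentially the same route as the paper: direct expansion of $w(s_1,s_2,s_3^{s_4^\lambda})$, then applying the index swap $(12)$ to obtain the reversed-triangle expansion and adding after Frobenius to get $D+\lambda E+\lambda^2 F$, with (1) and (2) following from the sum-and-difference trick and the degree bound respectively. You are in fact more careful than the paper on one point: you explicitly flag that identifying the $\lambda^2$-coefficient with $F$ requires $w(s_3,s_4)\in M$, which the paper's ``easy calculation'' passes over in silence but which is indeed covered by the standing convention of the section that all intermediate $2$-cycles are unitary.
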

\begin{proof}
  Observe that
  $s_3^{s_4^{\lambda}}=1+(u_3-\lambda \phi_4(u_3)u_4) \otimes
  (\phi_3+\lambda \phi_3(u_4) \phi_4)$. Therefore
  \begin{align*}
    w(s_1,s_2,s_3^{s_4^{\lambda}})
    & = \phi_2(u_1) \cdot (\phi_3(u_2)+\lambda \phi_3(u_4)\phi_4(u_2)) \cdot
      (\phi_1(u_3)-\lambda \phi_4(u_3) \phi_1(u_4)) \\
    & = \phi_2(u_1) \phi_3(u_2) \phi_1(u_3)-
      \lambda \phi_2(u_1)\phi_3(u_2)\phi_4(u_3)\phi_1(u_4) \\
    & +\lambda \phi_2(u_1)\phi_3(u_4)\phi_4(u_2)\phi_1(u_3)-
      \lambda^2 \phi_2(u_1)\phi_3(u_4)\phi_4(u_2)\phi_4(u_3)\phi_1(u_4)\\
    & = A+\lambda B+\lambda^2 C.
  \end{align*}
  In particular
  $w(s_1,s_2,s_3^{s_4^{\lambda}})+w(s_1,s_2,s_3^{s_4^{-\lambda}}) =
  2A+2\lambda^2 C$.
  Using the permutation $(12)$ to the indices in the formulas given for $A,B,C$,
  an easy calculation shows that
  \[d_u(s_1,s_2,s_3^{s_4^\lambda})=w(s_1,s_2,s_3^{s_4^{\lambda}})+
    w(s_2,s_1,s_3^{s_4^{\lambda}})^{\sqrt q}=D+\lambda E+\lambda^2F.\]
  The remaining claims of the lemma are clear.
\end{proof}
\begin{lem} \label{lem:red2} Assume we are in the case $G=\SL(V)$ and
  that $|M| = \sqrt{q}$. Assume at least one of the following holds.
  \begin{thmlist}
  \item There exist $s_1,s_2,s_3 \in X$ with 
    $w(s_1,s_2,s_3) \not \in M$ and $d_u(s_1,s_2,s_3) \neq 0$.
    \label[lem]{lem:red2-1}
  \item There exist $s_1,s_2,s_3,s_4 \in X$ with 
    $w(s_1,s_2,s_3) \not \in M$ and $d_u(s_4,s_3,s_2) \neq 0$.
    \label[lem]{lem:red2-2}
  \end{thmlist}
  Then there exist $t_1,t_2 \in \mathcal{T}$ such that
  $w(t_1,t_2) \not \in M$ and $\ell_X(\{t_1,t_2\})=O(1)$.
\end{lem}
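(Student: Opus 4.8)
The plan is to promote either hypothesis to a two-way directed edge of non-unitary weight by a single parametrised conjugation, controlling via \cref{lem:af} both the weight and the $d_u$-value of an auxiliary triangle as quadratic polynomials in the parameter. Write $s_i=1+u_i\otimes\phi_i$, and recall that $X$ is $M$-closed (so $s^\lambda\in X$ for $s\in X$, $\lambda\in M$) and that any two-way directed edge met along the way may be assumed unitary, i.e.\ to have weight in $M$ --- otherwise that edge is the pair $(t_1,t_2)$ we are after; in particular every $2$-cycle weight occurring below lies in $M$.

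I would first clear the cheap cases. By \cref{lem:abc-acb}, $w(s_1^{s_2},s_3)\equiv w(s_1,s_3,s_2)-w(s_1,s_2,s_3)\pmod{M}$, and the same kind of congruence holds for every cyclic relabelling of the two triangles occurring in the hypotheses; so if one of these triangles has its two orientations in distinct cosets of $M$, a single conjugate of length $\le 3$ over $X$ is a non-unitary $2$-cycle and we are done (this subsumes \cref{rem:abc-acb}), while one-way edges are disposed of by conjugating them as in \cref{lem:af}. Hence we may assume both triangles are orientation-symmetric. Writing $w(s_1,s_3,s_2)=w(s_1,s_2,s_3)+m$ with $m\in M$ and expanding, $d_u(s_1,s_2,s_3)=w(s_1,s_2,s_3)+w(s_1,s_2,s_3)^{\sqrt q}+m$, which lies in $M$ because $x+x^{\sqrt q}$ is fixed by $x\mapsto x^{\sqrt q}$ (here $q$ odd is used); the witness $d_u(s_4,s_3,s_2)$ in the second hypothesis is treated identically once $(s_4^{s_3},s_2)$ has failed to be non-unitary. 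So the relevant $d_u$-witness is now a nonzero element of $M^\times$.

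The main step feeds in a fourth transvection $s_0\in X$ with $u_0\notin\langle u_1,u_2,u_3\rangle$, which exists since $_VX$ spans $V$ (property \ref{con-sc}) and $\dim V\ge 4$; in the second hypothesis $s_4$ is retained as the source of the $d_u$-witness and $s_0$ is only a generic direction. Relabelling so the two triangles share exactly the edge $\{s_2,s_3\}$, conjugate the non-shared vertex $s_1$ by $s_0^\lambda$ and apply \cref{lem:af} to $(s_1^{s_0^\lambda},s_2,s_3)$: its weight is $A+\lambda B+\lambda^2 C$ with $A=w(s_1,s_2,s_3)\notin M$ and $C=-w(s_1,s_0)\,w(s_2,s_3,s_0)$, and if $w(s_2,s_3,s_0)\notin M$ we may replace the original triangle by $(s_2,s_3,s_0)$ (this recurs only boundedly often), so we may assume $C\in M$. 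The difference of the two orientations is again quadratic in $\lambda$ with its $\lambda^0$- and $\lambda^2$-coefficients in $M$ (the first by orientation-symmetry, the second by the cancellation that gave $C\in M$, after one more cheap-case reduction), so for $\lambda\in M^\times$ it leaves $M$ precisely when its $\lambda$-coefficient --- a difference of two $4$-cycle weights genuinely involving $s_0$ --- does, and that coefficient can be made to lie outside $M$ by a suitable choice of $s_0$; this is exactly the point that fails for conjugations inside $\langle s_1,s_2,s_3\rangle$, where $C$ vanishes and the $\lambda$-coefficient lies in $M$ identically. By the first conclusion of \cref{lem:af} each pair $\{\lambda,-\lambda\}$ with $\lambda\in M^\times$ contains a member keeping $w(s_1^{s_0^\lambda},s_2,s_3)$ outside $M$, and since $|M|\ge 5$ one such $\lambda$ also makes the two orientations of $(s_1^{s_0^\lambda},s_2,s_3)$ fall in distinct cosets of $M$; applying the cheap-case reduction once more, $\bigl((s_1^{s_0^\lambda})^{s_2},s_3\bigr)$ is then a two-way edge of weight outside $M$ and of length $\le 5$ over $X$.

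The hard part is the residual regime where the $\lambda$-coefficient above lies in $M$ for every admissible $s_0$: this is where the $d_u$-witness must be used. One instead conjugates the shared vertex $s_2$ by $s_1^\lambda$ and uses the second conclusion of \cref{lem:af} (with the nonzero $d_u$-witness of the hypothesis in the role of the constant term $D$) to keep the relevant $d_u$-value nonzero for all but at most two $\lambda$, combining this with a generic $s_0$-conjugation of $s_1$ to break the coset symmetry; if this too were impossible for all $\lambda$ and all admissible $s_0$, then every cycle reachable from $X$ in $O(1)$ steps would be unitary, contradicting $\langle X\rangle=\SL(V)$ via \cref{thm:structure-from-cycles}. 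Making this dichotomy precise, and checking the parameter count in the smallest allowed fields $|M|\in\{5,7,\dots\}$ where the two conclusions of \cref{lem:af} leave little slack in the choice of $\lambda$, is the part I expect to need the most care.
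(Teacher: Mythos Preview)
Your proposal has a genuine gap: you never settle case~(1), and your attempt to do so by parametrised conjugation is both incomplete (you say as much in your final paragraph) and unnecessary. The paper's argument for case~(1) is short and does not use \cref{lem:af} at all. From $w(s_1,s_2,s_3)\notin M$ and $w(s_1,s_2,s_3)-w(s_1,s_3,s_2)\in M$ one gets $w(s_1,s_2,s_3)+w(s_1,s_3,s_2)\neq 0$; this quantity is precisely the determinant of the $3\times 3$ matrix $(\phi_i(u_j))$, so $u_1,u_2,u_3$ are independent and $V=\langle u_1,u_2,u_3\rangle\oplus(\ker\phi_1\cap\ker\phi_2\cap\ker\phi_3)$. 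Then $H=\langle s_1^M,s_2^M,s_3^M\rangle$ acts irreducibly on the $3$-dimensional summand $W$, and since the triangle is neither symplectic nor unitary, \cref{thm:Wagner} together with \cref{rem:weights-gen-def-field} forces $H\cong\SL(3,q)$. Now the bounded-rank bound (\cref{cor:Babai-strong}) gives any $t_1,t_2\in\mt(H)$ with $w(t_1,t_2)\notin M$ at length $O(1)$ over $\{s_1^M,s_2^M,s_3^M\}\subset X$. This is the key idea you are missing.

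Your strategy of introducing a generic $s_0\notin\langle u_1,u_2,u_3\rangle$ and conjugating does not close the gap: the assertion that ``that coefficient can be made to lie outside $M$ by a suitable choice of $s_0$'' is unsupported, and your fallback (``if this too were impossible \dots\ then every cycle reachable from $X$ in $O(1)$ steps would be unitary, contradicting \cref{thm:structure-from-cycles}'') does not follow, since \cref{thm:structure-from-cycles} only guarantees some non-unitary cycle in $\Gamma(X)$, not one that your particular conjugation moves witness. By contrast, once case~(1) is handled via Wagner and \cref{cor:Babai-strong}, case~(2) reduces to manufacturing a single ``good'' triangle (non-unitary with weight outside $M$), and the paper does exactly this with a finite case analysis using \cref{lem:af}, distinguishing on which of the edges around $s_1,\ldots,s_4$ exist; your observation that $d_u(s_1,s_2,s_3)\in M$ after the cheap reductions is correct but is not what drives the argument.
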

\begin{proof}
  Assume case (1) holds. We have a triangle $(s_1,s_2,s_3)$ with the
  property that $w(s_1,s_2,s_3) \not \in M$ and
  $d_u(s_1,s_2,s_3) \neq 0$. Note that we can assume that
  $w(s_1,s_2,s_3)-w(s_1,s_3,s_2) \in M$ by \cref{lem:abc-acb}, so
  $w(s_1,s_2,s_3)+w(s_1,s_3,s_2) \neq 0$. Indeed, if
  $w(s_1,s_2,s_3)+w(s_1,s_3,s_2)=0$ then
  \[
    2 w(s_1,s_2,s_3)=w(s_1,s_2,s_3)-w(s_1,s_3,s_2)\in M
  \]
  and being the characteristic odd, this would imply
  $w(s_1,s_2,s_3) \in M$, a contradiction.

  Define $W:=\langle u_1,u_2,u_3 \rangle_{\mathbb{F}_q}$. The fact
  that $w(s_1,s_2,s_3)+w(s_1,s_3,s_2) \neq 0$ implies that
  $\dim_{\mathbb{F}_q}(W)=3$ (in other words, $u_1$, $u_2$ and $u_3$
  are linearly independent over $\mathbb{F}_q$) and that we have a
  direct sum decomposition
  \[
    V = \langle u_1,u_2,u_3 \rangle \oplus (\ker(\phi_1) \cap
    \ker(\phi_2) \cap \ker(\phi_3)).
  \]
  This is because if a linear combination
  $\mu_1 u_1 + \mu_2 u_2 + \mu_3 u_3$ (where
  $\mu_1,\mu_2,\mu_3 \in \mathbb{F}_q$) belongs to $\ker(\phi_1)\cap
  \ker(\phi_2)\cap\ker(\phi_3)$ (in particular, this happens if
  this linear combination is zero) then applying $\phi_1$, $\phi_2$
  and $\phi_3$ to such a linear combination we obtain a homogeneous
  linear system in $\mu_1,\mu_2,\mu_3$ whose determinant is precisely
  $w(s_1,s_2,s_3)+w(s_1,s_3,s_2)$. Moreover, a very similar argument
  shows that $\phi_1$, $\phi_2$ and $\phi_3$ are linearly independent,
  and therefore generate the dual space $W^*$.

  Since $(s_1,s_2,s_3)$ is a $3$-cycle, the transvection graph
  $\Gamma(\{s_1,s_2,s_3\})$ is strongly connected, therefore the group
  $H=\langle s_1^M,s_2^M,s_3^M \rangle \leq GL(V)$ is identified (via
  the direct sum decomposition above) with an irreducible subgroup of
  $GL(W)$ (by \cref{thm:irred-condition}). By \cref{thm:Wagner} and by
  \cref{sec:deffield}, $H$ is a special linear, unitary or symplectic
  group of dimension $3$ over $M(w(s_1,s_2,s_3)=\FF q$. But since the
  $3$-cycle $(s_1,s_2,s_3)$ is non-symplectic and non-unitary, the
  group $H$ cannot be symplectic nor unitary, so $H \cong \SL(3,q)$.

  Let $t_1,t_2\in \mt(H)\subset\mt$ with $w(t_1,t_2)\notin M$.
  By \cref{cor:Babai-strong},
  \[
    \ell_X(t_1,t_2)\leq \ell_{\{s_1^M,s_2^M,s_3^M\}}(H)
    =O\Big(\frac{\log|SL(3,q)|}{\log(3|M|)}\Big)^c=O(1),
  \]
  as claimed.

  \medskip

  Assume case (2) holds. Now, we can assume that $w(s_1,s_3,s_2)\notin M$
  by \cref{rem:abc-acb}. By applying a permutation  to the indeces if necessary,
  we can assume that $(s_1,s_2,s_3)$ and
  $(s_2,s_4,s_3)$ are two triangles, with $w(s_1,s_2,s_3)\notin M$ and
  $d_u(s_2,s_4,s_3)\neq 0$. We can also assume that $d_u(s_1,s_2,s_3)=0$,
  furthermore $w(s_2,s_3,s_4) \in M$ and $w(s_2,s_4,s_3) \in M^\times$
  by case (1). We call a triangle ``good'' if it is non-unitary and its weight
  is not in $M$. If we find a good triangle we are done by case (1),
  so what we will do is to look for a good triangle.  By
  \cref{rem:ds-du-4}, $(s_1,s_2)$, $(s_2,s_3)$ and
  $(s_3,s_1)$ are double edges. So we have the following picture.
  \[
    \begin{tikzpicture}
      \draw[<->,thick] (0.2,-0.2) -- (1.8,-0.8);
      \draw[<->,thick] (0,-0.2) -- (0,-0.8);
      \draw[<->,thick] (0.2,-1) -- (1.8,-1);
      \draw[<-,thick] (2,-1.2) -- (2,-1.8);
      \draw[<-,thick] (1.8,-1.8) -- (0.2,-1.2);
      \draw (0,0) node {$s_1$};
      \draw (0,-1) node {$s_2$};
      \draw (2,-1) node {$s_3$};
      \draw (2,-2) node {$s_4$};
    \end{tikzpicture}
  \]
  In each of the following cases we use \cref{lem:af} for a well-chosen
  triangle.

  \medskip

  \noindent\textbf{Case 1:} $[s_4,s_1]\notin\E$ or $w(s_2,s_4,s_1)\notin M$.\\
  We have $w(s_2,s_4,s_1^{s_3^\lambda})= A+B\lambda+C\lambda^2$ with
  $A=w(s_2,s_4,s_1),\ B=w(s_2,s_4,s_3,s_1)-w(s_2,s_4,s_1,s_3)$ and
  $C= -w(s_1,s_3) \cdot w(s_2,s_4,s_3) \in M^\times$.

  If $[s_4,s_1]\notin\E$, then $A=0$ and
  \[
    B=w(s_4,s_3,s_1,s_2) =
    \frac{w(s_1,s_2) w(s_3,s_1) w(s_2,s_4,s_3)}{w(s_1,s_3,s_2)} \notin M,
  \]
  so $w(s_2,s_4,s_1^{s_3^\lambda})\notin M$ for every
  $\lambda\in M^\times$.

  On the other hand, if $A \notin M$, then since $C \in M$,
  $w(s_3,s_1) \cdot d_u(s_2,s_4,s_3)\neq 0$ and $|M| \geq 5$,
	there exists $\lambda\in M$
  such that $(s_2,s_4,s_1^{s_3^\lambda})$ is a good triangle.

  \medskip

  \noindent\textbf{Case 2:} $[s_4,s_1]\in\E$ and
  ($[s_1,s_4]\in \E$ or $[s_3,s_4]\notin \E$). \\
  We have $w(s_3,s_2,s_1^{s_4^{\lambda}})=A+B\lambda +C\lambda^2$
  with $A = w(s_3,s_2,s_1)\notin M$ and
  $C=-w(s_1,s_4) \cdot w(s_3,s_2,s_4)\in M$. In particular, at
  least one of $w(s_3,s_2,s_1^{s_4^{\lambda}})$ and
  $w(s_3,s_2,s_1^{s_4^{-\lambda}})$ is not in $M$ for any $\lambda\in M$.

  Now, we have
  $d_u(s_3,s_2,s_1^{s_4^{\lambda}}) = \lambda E - \lambda^2 F$
  for some $E,F\in\FF q$.
  If $[s_1,s_4]\in\E$, then $F=w(s_1,s_4) \cdot d_u(s_2,s_4,s_3)\neq 0$,
  so there are at most two values $\lambda\in M$ such that
  $d_u(s_3,s_2,s_1^{s_4^{\lambda}})=0$. On the other hand, if
  $[s_1,s_4] \notin\E$ and $[s_3,s_4]\notin\E$, then
  $d_u(s_3,s_2,s_1^{s_4^{\lambda}}) = \lambda E$, where
  \begin{align*}
    E
    &=w(s_3,s_2,s_4,s_1)-w(s_3,s_2,s_1,s_4) +
      w(s_2,s_3,s_4,s_1)^{\sqrt{q}}-w(s_2,s_3,s_1,s_4)^{\sqrt{q}}\\
    &=w(s_3,s_2,s_4,s_1)\neq 0,
  \end{align*}
  so $d_u(s_3,s_2,s_1^{s_4^{\lambda}})\neq 0$ for every
  $\lambda\in M^\times$.  Since $|M|\geq 5$, there is a
  $\lambda\in M$ such that $(s_3,s_2,s_1^{s_4^{\lambda}})$ is a good
  triangle.

  \medskip

  \noindent\textbf{Case 3:} $[s_4,s_1]$ is a one-way directed edge,
  $(s_4,s_3)$ is a two-way directed edge, $w(s_1,s_2,s_4)\in M^\times$.\\
  Since $A=w(s_1,s_2,s_3) \not \in M$, $C=-w(s_3,s_4)\cdot
  w(s_1,s_2,s_4) \in M$,
  $F=-w(s_3,s_4) \cdot d_u(s_1,s_2,s_4)\neq 0$ and $|M| \geq 5$, we can find
  $\lambda \in M$ such that the triangle
  $(s_1,s_2,s_3^{s_4^{\lambda}})$ is good.
\end{proof}

From now on, if a triangle is unitary and its weight
is not in $M$ then we call it of ``type 1'' and if a triangle is
non-unitary and its weight is in $M$ then we call it of ``type
2''. Note that the edges of a triangle of type 1 are two-way directed
by \cref{rem:ds-du-4}.

Assuming that there are triangles of both types, what we want is to
find one of the two situations described in \cref{lem:red2} in the
transvection graph spanned by a power of $X$ whose exponent is bounded
by a constant. So in the following discussion we may assume
(by a way of contradiction) that in every power
of $X$ whose exponent is bounded by a constant, all non-unitary
triangles are of type 2 and all triangles whose weight is not in $M$
are of type 1. Under this assumption, our goal is to generate four
transvections satisfying \cref{lem:red2-2} whose length over $X$ is bounded.

Assume we have two triangles $T_1=(s_1,s_2,s_3)$ and
$T_2=(s_4,s_5,s_6)$, with $T_1$ of type 1 and $T_2$ of type 2. By
\cref{rem:ds-du-4}, $T_1$ consists of double edges.
First, we want to reduce the problem to the case in which $T_1$
and $T_2$ share a vertex.

Let us assume that $T_1$ and $T_2$ do not have a vertex in common.
Let, say, $s_3$ and $s_4$ be chosen such that their two-way distance
in $\Gamma(X)$ is the smallest possible one, and let
$s_3,r_1,\ldots,r_k,s_4$ be a shortest two-way path in $\Gamma(X)$, so
$k\leq 5$ by property \ref{con-2waydiamXatmost6}. If $k\geq 1$, then we
may apply \cref{lem:red1} to change $T_2$ to a triangle of type $2$,
whose distance from $T_1$ is shorter. Using this process at most 5
times, we may assume that we are in the following situation.
\[\begin{tikzpicture}
\draw[<->,thick] (0.2,0.2) -- (0.8,0.8);
\draw[<->,thick] (1.2,0.8) -- (1.8,0.2);
\draw[<->,thick] (1.8,0) -- (0.2,0);
\draw[<->,thick] (2.2,0) -- (2.8,0);
\draw[->,thick] (3.2,0.2) -- (3.8,0.8);
\draw[->,thick] (4.2,0.8) -- (4.8,0.2);
\draw[->,thick] (4.8,0) -- (3.2,0);
\draw (0,0) node {$s_1$};
\draw (1,1) node {$s_2$};
\draw (1,0.4) node {$T_1$};
\draw (4,0.4) node {$T_2$};
\draw (2,0) node {$s_3$};
\draw (3,0) node {$s_4$};
\draw (4,1) node {$s_5$};
\draw (5,0) node {$s_6$};
\end{tikzpicture}\]
If $d_u(s_5,s_6,s_3)\neq 0$, then we may replace $T_2$ to
$(s_5,s_6,s_3)$, while if $w(s_1,s_2,s_4) \notin M$
then we may replace $T_1$ with $(s_1,s_2,s_4)$ and reduce to the case
in which $T_1$ and $T_2$ share a vertex.
Now, let us assume that $d_u(s_5,s_6,s_3)=0$ and $w(s_1,s_2,s_4) \in M$.
Then $d_u(s_5,s_6,s_3^{s_4^{\lambda}})=\lambda E+\lambda^2F$ with
$F=w(s_3,s_4) \cdot d_u(s_5,s_6,s_4) \neq 0$, so
there is at most one $\lambda\in M^\times$ such that
$d_u(s_5,s_6,s_3^{s_4^{\lambda}})=0$. Since $ |M|\geq 5$, we can choose
$\lambda\in M^\times$ such that both $d_u(s_5,s_6,s_3^{s_4^{\lambda}})\neq 0$
and $d_u(s_5,s_6,s_3^{s_4^{-\lambda}})\neq 0$.

Since $w(s_1,s_2,s_3) \notin M$ but $w(s_1,s_2,s_4) \in M$, at least one of
$w(s_1,s_2,s_3^{s_4^{\lambda}})$, $w(s_1,s_2,s_3^{s_4^{-\lambda}})$ is
not in $M$. Up to replacing $\lambda$ with $-\lambda$, we may assume
that $w(s_1,s_2,s_3^{s_4^{\lambda}}) \notin M$. By our choice of $\lambda$,
$d_u(s_3^{s_4^{\lambda}},s_5,s_6) \neq 0$, so at least one of
$(s_3^{s_4^{\lambda}},s_5,s_6)$, $(s_3^{s_4^{\lambda}},s_6,s_5)$ is a
triangle of type 2.

So we are in the situation where $T_1$ and $T_2$ share a vertex as it
can be seen on Figure \ref{fig:twotriangles}.
\begin{figure}[h]
\begin{tikzpicture}
\draw[<->,thick] (0.2,0.2) -- (0.8,0.8);
\draw[<->,thick] (1.2,0.8) -- (1.8,0.2);
\draw[<->,thick] (1.7,0) -- (0.3,0);
\draw[->,thick] (2.2,0.2) -- (2.8,0.8);
\draw[->,thick] (3.2,0.8) -- (3.8,0.2);
\draw[->,thick] (3.8,0) -- (2.3,0);
\draw (0,0) node {$s_1$};
\draw (1,1) node {$s_2$};
\draw (1,0.4) node {$T_1$};
\draw (3,0.4) node {$T_2$};
\draw (2,0) node {$s_3$};
\draw (3,1) node {$s_4$};
\draw (4,0) node {$s_5$};
\end{tikzpicture}
\caption{ Triangles of type 1 and 2. \newline
Type 1: unitary, weight not in $M$. \newline
Type 2: non-unitary, weight in $M$.}
\label{fig:twotriangles}
\end{figure}

We consider several cases, which are distinguished by the directed edges
appearing on these two triangles.

\medskip

\noindent\textbf{Case 1:} $[s_2,s_5]$, $[s_3,s_5]$, $[s_2,s_4]$, $[s_5,s_1]$,
$[s_4,s_1]$, $[s_4,s_3]$ are not edges in Figure \ref{fig:twotriangles}.

We would like to prove
that $w(s_1^{s_3},s_2^{s_3},s_5^{s_4}) \not \in M$ and that
$d_u(s_1^{s_3},s_5^{s_4},s_3^{s_4})\neq 0$.

\[\begin{tikzpicture}
\draw[<->,thick] (0.2,-0.2) -- (1.8,-0.8);
\draw[<->,thick] (0,-0.2) -- (0,-0.8);
\draw[<->,thick] (0.2,-1) -- (1.8,-1);
\draw[->,thick] (2,-1.2) -- (2,-1.8);
\draw[->,thick] (1.8,-1.8) -- (0.2,-1.2);
\draw (0,0) node {$s_2^{s_3}$};
\draw (0,-1) node {$s_1^{s_3}$};
\draw (2,-1) node {$s_5^{s_4}$};
\draw (2,-2) node {$s_3^{s_4}$};
\end{tikzpicture}\]
Note that
\begin{align*}
s_1^{s_3}&=1+(u_1-\phi_3(u_1)u_3) \otimes (\phi_1+\phi_1(u_3)\phi_3),\\
s_2^{s_3}&=1+(u_2-\phi_3(u_2)u_3) \otimes (\phi_2+\phi_2(u_3)\phi_3),\\
s_3^{s_4}&=1+(u_3-\phi_4(u_3)u_4) \otimes (\phi_3+\phi_3(u_4)\phi_4),\\
s_5^{s_4}&=1+(u_5-\phi_4(u_5)u_4) \otimes (\phi_5+\phi_5(u_4) \phi_4).
\end {align*}
Since
$\phi_5(u_2)=\phi_5(u_3)=\phi_4(u_2)=\phi_1(u_5)=\phi_1(u_4)=\phi_3(u_4)=0$,
we have
\begin{align*}
  w(s_1^{s_3},s_2^{s_3},s_5^{s_4})
  &=
     \phi_2(u_1)\Big(-\phi_5(u_4)\phi_3(u_2)\phi_4(u_3)\Big)\cdot
     \Big(\phi_1(u_3)\phi_3(u_5)\Big)\\
  &=-w(s_1,s_2,s_3) \cdot w(s_3,s_4,s_5) \not \in M,
\end{align*}
being $w(s_1,s_2,s_3) \not \in M$ and $0 \neq w(s_3,s_4,s_5) \in
M$. So we may assume that $(s_1^{s_3},s_2^{s_3},s_5^{s_4})$ is a
triangle of type 1.  This implies that it is a two-way directed cycle,
so $[s_1^{s_3},s_5^{s_4}]\in\E$.  Moreover, by \cref{lem:conjiso}
$[s_5^{s_4},s_3^{s_4}]$ is a single edge, being a conjugate of the
single edge $[s_5,s_3]$, and $[s_3^{s_4},s_1^{s_3}]\in\E$ since
\[
   (\phi_1+\phi_1(u_3)\phi_3)(u_3-\phi_4(u_3)u_4)=\phi_1(u_3)\neq 0
\]
being $\phi_1(u_4)=\phi_3(u_4)=0$. Thus,
$(s_1^{s_3},s_5^{s_4},s_3^{s_4})$ is a one-way directed triangle,
so $d_u(s_1^{s_3},s_5^{s_4},s_3^{s_4})\neq 0$ and we are done.

\medskip

%

\noindent\textbf{Case 2:} $[s_4,s_3] \in\E$.

Let $\lambda \in M$ (to be specified). 

Assume first that $w(s_2,s_1,s_4) \neq 0$. We may assume that
both triangles $(s_1,s_2,s_3)$ and $(s_1,s_3,s_2)$ are of type 1, by
\cref{rem:abc-acb}. The triangle
$(s_2,s_1,s_4)$ is of type $2$ (in which case we are done) unless
$(s_1,s_4)$ and $(s_4,s_2)$ are double edges (by \cref{rem:ds-du-4}), so we
may assume this is the case.

Since $w(s_2,s_4) \cdot d_u(s_5,s_3,s_4) \neq 0$,
there are at most two values of $\lambda\in M$ such that
$d_u(s_5,s_3,s_2^{s_4^{\lambda}})=0$. If
$w(s_3,s_1,s_4) \notin M$ then $(s_3,s_1,s_4)$ is a triangle of
type $1$ and we are done, so now assume that $w(s_3,s_1,s_4) \in
M$. Since $w(s_3,s_1,s_2) \notin M$, at least one of
$w(s_3,s_1,s_2^{s_4^{\lambda}})\notin M$ or
$w(s_3,s_1,s_2^{s_4^{-\lambda}})\notin M$ for any $\lambda\in M$. Since
$|M| \geq 5$, there exists $\lambda \in M$ such that
$d_u(s_5,s_3,s_2^{s_4^{\lambda}}) \neq 0$ and
$w(s_3,s_1,s_2^{s_4^{\lambda}}) \not \in M$. 
Thus, we are done in this case.

Assume now that $w(s_2,s_1,s_4)=0$. Since
$w(s_2,s_3) \cdot d_u(s_3,s_4,s_5) \neq 0$, there are at most two
values of $\lambda \in M$ such that
$d_u(s_2^{s_3^{-\lambda}},s_4,s_5)
=d_u(s_2,s_4^{s_3^{\lambda}},s_5^{s_3^{\lambda}}) = 0$.

Since $w(s_1,s_2,s_3) \not \in M$, by \cref{rem:abc-acb} we may assume
that $w(s_2,s_1,s_3) \not \in M$, so that
$w(s_3,s_4) \cdot w(s_2,s_1,s_3) \not \in M$ being
$w(s_3,s_4) \in M^\times$.
We would like to apply Lemma \ref{lem:af} in order to obtain that
$w(s_2,s_1,s_4^{s_3^{\lambda}}) \not \in M$,
while at the same time $d_u(s_2^{s_3^{-\lambda}},s_4,s_5) \ne 0$.  It
was proved in \cref{lem:af} that $w(s_2,s_1,s_4^{s_3^{\lambda}})$ can
be expressed as $A+B\lambda+C \lambda^2$. In our case we have
$A=w(s_2,s_1,s_4)=0$. If $\alpha,\beta\in M^\times,\ \alpha\neq \beta$
satisfies $B \alpha+C \alpha^2 \in M$ and $B \beta+C \beta^2 \in M$, then
Cramer's rule imply $B,C \in M$, which is not
the case since $C=w(s_3,s_4) \cdot w(s_2,s_1,s_3) \not\in M$.  Thus
$w(s_2,s_1,s_4^{s_3^{\lambda}})$ is in $M$ for at most one value of
$\lambda \in M^\times$.

Since $|M|\geq 5$, we obtain that there is a $\lambda\in M$ such that
$w(s_2,s_1,s_4^{s_3^{\lambda}})\notin M$ and
$d_u(s_2, s_4^{s_3^{\lambda}},s_5^{s_3^{\lambda}})\neq 0$. We are done.

\medskip

\noindent\textbf{Case 3:} $[s_3,s_5]\in\E$.

Let $\lambda \in M$ (to be specified).

Assume first that $w(s_1,s_2,s_5) \neq 0$. We may assume that both triangles
$(s_1,s_2,s_3)$ and $(s_1,s_3,s_2)$ are of type 1, by
\cref{rem:abc-acb}. The triangle $(s_1,s_2,s_5)$ is of type $2$ (in which
case we are done) unless $(s_2,s_5)$ and $(s_5,s_1)$ are double edges
(by \cref{rem:ds-du-4}), so we may assume this is the case.

Since $w(s_2,s_5) \cdot d_u(s_4,s_3,s_5) \neq 0$, there are at most
two values of $\lambda \in M$ such that
$d_u(s_4,s_3,s_2^{s_5^{\lambda}})=0$. If $w(s_1,s_5,s_3) \not \in M$
then $(s_1,s_5,s_3)$ is a triangle of type $1$ and we are done, so now
assume that $w(s_1,s_5,s_3) \in M$. Since
$w(s_1,s_2,s_3) \not \in M$, either
$w(s_1,s_2^{s_5^{\lambda}},s_3) \not \in M$ or
$w(s_1,s_2^{s_5^{-\lambda}},s_3) \not \in M$ for any $\lambda\in M$.
As in the previous case, since
$|M|\geq 5$, there exists $\lambda \in M$ such that
$d_u(s_4,s_3,s_2^{s_5^{\lambda}}) \neq 0$ and
$w(s_1,s_2^{s_5^{\lambda}},s_3) \not \in M$. 
Thus, we are done in this case.

Assume now that $w(s_1,s_2,s_5)=0$. Since
$w(s_2,s_3) \cdot d_u(s_3,s_5,s_4) \neq 0$, there are at most two
values of $\lambda \in M$ such that
$d_u(s_2^{s_3^{-\lambda}},s_5,s_4)=d_u(s_2,s_5^{s_3^{\lambda}},s_4^{s_3^{\lambda}})=
0$.

Since $w(s_1,s_2,s_3) \not \in M$ and $w(s_3,s_5)\in M^\times$, we have
$w(s_3,s_5) \cdot w(s_1,s_2,s_3) \not \in M$.

Now, we can use the same argument as in the previous case but applied to
$w(s_1,s_2,s_5^{s_3^\lambda})=B\lambda+C\lambda^2$ (where $C\notin M$)
in order to find a
$\lambda\in M$ such that both $w(s_1,s_2,s_5^{s_3^\lambda})\notin M$ and
$d_u(s_2,s_5^{s_3^{\lambda}},s_4^{s_3^{\lambda}})\neq 0$.

\medskip

\noindent\textbf{Case 4:} $[s_4,s_3] \notin\E$, $[s_3,s_5] \notin\E$
and at least one of $[s_2,s_4] \in\E$ and $[s_2,s_5] \in\E$.

Let $\lambda \in M$ (to be specified).

Since $[s_4,s_3] \notin\E$, $w(s_3,s_4)=0$, moreover
$w(s_1,s_2,s_3) \not \in M$, therefore, up to replacing $\lambda$
with $-\lambda$, we may assume that
$w(s_1,s_2,s_3^{s_4^{\lambda}}) \not \in M$, so that
$(s_1,s_2,s_3^{s_4^{\lambda}})$ is a triangle of type 1. Therefore,
it is two-way directed, so $[s_3^{s_4^\lambda},s_2]\in \E$. Since at
least one of $[s_2,s_4]$ and $[s_2,s_5]$ is an edge and
$|M|\geq 3$, we may choose $\lambda \in M$ such that
$\phi_5(u_2) \neq \pm \lambda \phi_5(u_4) \phi_4(u_2)$ (the $\pm$ sign
is needed because in this argument we have possibly replaced $\lambda$
with $-\lambda$). Therefore
\[
  \phi_5(u_2)+\lambda \phi_5(u_4)
  \phi_4(u_2) \neq 0,
  \textrm{ that is}, [s_2,s_5^{s_4^\lambda}]\in\E.
\]
Note that
$[s_5^{s_4^{\lambda}},s_3^{s_4^{\lambda}}]$ is a single edge being a
conjugate of the single edge $[s_5,s_3]$. By \cref{rem:ds-du-4}, the
triangle $(s_2,s_5^{s_4^{\lambda}},s_3^{s_4^{\lambda}})$ is of type 2.

\noindent\textbf{Case 5:} $[s_4,s_3] \notin\E$, $[s_3,s_5] \notin\E$ and
at least one of $[s_5,s_1]\in\E$ and $[s_4,s_1]\in\E$.

Let $\lambda \in M$ (to be specified).

Since $[s_3,s_5] \notin\E$, $w(s_3,s_5)=0$, moreover
$w(s_1,s_2,s_3) \not \in M$, therefore, up to replacing $\lambda$
with $-\lambda$, we may assume that
$w(s_1,s_2,s_3^{s_5^{\lambda}}) \not \in M$, so that
$(s_1,s_2,s_3^{s_5^{\lambda}})$ is a triangle of type 1. Therefore,
it is two-way directed, so $[s_1,s_3^{s_5^\lambda}]\in \E$. Since at
least one of $[s_5,s_1]$ and $[s_4,s_1]$ is an edge and
$|M| \geq 3$, we may choose $\lambda \in M$ such that
$\phi_1(u_4) \neq \pm \lambda \phi_5(u_4) \phi_1(u_5)$ (the $\pm$ sign
is needed because in this argument we have possibly replaced $\lambda$
with $-\lambda$). Therefore
\[
  \phi_1(u_4)-\lambda \phi_5(u_4) \phi_1(u_5)
  \neq 0,
  \textrm{ that is}, [s_4^{s_5^\lambda},s_1]\in\E.
\]
Note that $[s_3^{s_5^{\lambda}},s_4^{s_5^{\lambda}}]$ is a
single edge being a conjugate of the single edge $[s_3,s_4]$. By
\cref{rem:ds-du-4}, the triangle
$(s_1,s_3^{s_5^{\lambda}},s_4^{s_5^{\lambda}})$ is of type 2.

Now, we can finish the proof of our main theorem when $G=SL(V)$.
\begin{proof}[Proof of \cref{thm:main} for $G=SL(V)$]
  In the previous sections, we already showed how we can generate an
  $M$-closed set of transvections $X$.  The above discussion implies
  that, in the case $G=\SL(V)$, up to extending the $M$-closed $X$
  with exponent bounded by ${O(1)}$, we can reach
  $L_2(X)=\mathbb{F}_q$. Applying \cref{lem:M-closure-of-X-2}, we may
  assume that $X$ is $\mathbb{F}_q$-closed.  In particular, such an
  $X$ contains a full transvection group over $\FF q$ in length
  $(\log|G|)^{O(1)}$. Now, the theorem follows from \cite[Theorem
  1.5]{HZ}
\end{proof}
\subsection{Addition of parameters defining transvections}
\label{sec:addition}
In view of the previous section, from now on we assume that $V$ is a
non-degenerate symplectic or hermitian space with defining form $f$,
$G=\Sp(V)$ or $G=\SU(V)$
and $X\subset G$ is an $\FF{q_0}$-closed subset of generating transvections
with $\diam(\mt(X))\leq 2$. For simpler notation, we can add $1$ to $X$
and assume that $1\in X$. 

Recall that for any non-zero singular vector $v\in V$,
the transvection subgroup associated to $v$ is
$T_v=(1+v\otimes \varphi_v)^\mf$, where in the symplectic case
$\mf=\FF {q}$, while in the unitary case $\mf=\FF{q_0}\cdot \lambda_0$
where $\lambda_0\in\FF q^\times$ is any field element with
$\Tr(\lambda_0)=0$. In what follows, for any non-zero
singular $v\in V$, we denote by $t_v$ an arbitrary non-zero element of
the associated transvection subgroup $T_v$. 

By our assumptions, we have $X=\bigcup\,\{T_v\,|\, v\in {_VX}\}$.  Our
main goal is to prove that $\ell_X(T_{\sum_{i=1}^k a_i})= O(n^c)$ for
any $T_{a_1},\ldots,T_{a_k}\subset X$ where $k\leq n$ and
$\sum_{i=1}^ka_i$ is singular.  Since $\langle _VX\rangle=V$ this
implies that $\ell_X(\mt)=O(n^c)$. (Note that since
$T_{a_i}=T_{\lambda_ia_i}$, we do not need to take arbitrary linear
combinations.) First we prove this for $k=2$.
\begin{lem}\label{lem:t_a+b}
  Let $T_a,T_b\subset X$ for some $a,b\in V$ such that $a+b$ is singular.
  Then $\ell_X(T_{a+b})\leq c$ for some constant $c$.
\end{lem}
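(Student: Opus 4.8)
The plan is to reduce to a bounded-dimensional situation and then invoke a product-theorem argument. Since $a+b$ is singular and $a,b$ are themselves singular (being in $_VX$), the subspace $U:=\langle a,b\rangle$ is at most $2$-dimensional and carries the restriction of the form $f$. If $a,b$ are linearly dependent then $T_a=T_b=T_{a+b}$ (up to scaling the representing vector) and there is nothing to prove, so assume $\dim U=2$. The key subcase is whether $U$ is totally isotropic or non-degenerate: if $f(a,b)=0$ then $U$ is totally isotropic, $a+b$ is automatically singular, and $T_a$, $T_b$ commute, so we must work inside a slightly larger non-degenerate space; if $f(a,b)\ne 0$ then $U$ is a hyperbolic plane, but then $f(a+b,a+b)=2f(a,b)\ne 0$ (as $\kar K\ne 2$), so $a+b$ would not be singular — hence in the situation of the lemma we are always in the totally isotropic case. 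So the real content is: $a,b$ span a totally isotropic plane, and I want $T_{a+b}$ cheaply from $T_a\cup T_b$.

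First I would enlarge $U$ to a non-degenerate subspace $W$ of bounded dimension (dimension $4$ suffices: take a hyperbolic pair $a,a^*$ and $b,b^*$, arranged so $W=\langle a,a^*,b,b^*\rangle$ is non-degenerate of dimension $4$, which is possible since $n\ge 4$; in the unitary case one may need to be slightly careful but dimension $\le 6$ is still bounded). The point of passing to $W$ is that $T_a,T_b\subset \Sp(W)\oplus 1_{W^\perp}$ (resp.\ $\SU(W)\oplus 1_{W^\perp}$), and $a+b$ is a singular vector of $W$, so $T_{a+b}$ also lies in this subgroup. Next I would use that $\Gamma(X)$ has diameter at most $2$ (property \ref{con-diamXatmost2}): there is some $r\in X$ two-way connected to both a generator of $T_a$ and a generator of $T_b$, and moreover — since $X$ is $\FF{q_0}$-closed and $_VX$ spans $V$ — within bounded length over $X$ one can produce transvection subgroups $T_c\subset X^{(O(1))}$ for enough singular vectors $c\in W$ to guarantee that the subgroup generated by all the $T_c\subset X^{(O(1))}$ with $c\in W$ acts irreducibly on $W$ and contains a transvection subgroup of order $>2$ (using $|\FF{q_0}|>2$, which holds under the hypotheses of \cref{thm:main}; or $K_0>2$ in the setting of \cref{thm:X-contains-K-tr-sgrp}). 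By \cref{thm:Wagner} together with the cycle criterion (\cref{thm:structure-from-cycles}), this subgroup is $\Sp(W)$ or $\SU(W)$ accordingly — it is the full isometry group of the bounded-dimensional space $W$.

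Finally, having realised $\Sp(W)$ (resp.\ $\SU(W)$) as generated by a bounded number of transvection subgroups each of length $O(1)$ over $X$, I invoke \cref{cor:Babai-strong} for this bounded-rank group: since $T_{a+b}\subset \Sp(W)$ (resp.\ $\SU(W)$), we get
\[
\ell_X(T_{a+b})\leq \ell_X(\Sp(W))=O\!\left(\frac{\log|\Sp(W)|}{\log|X\cap \Sp(W)|}\right)^{c}=O(1),
\]
and similarly in the unitary case, which is exactly the claim. The main obstacle I expect is the bookkeeping in the middle step: producing, in bounded length over $X$, enough transvection subgroups $T_c$ inside the fixed bounded-dimensional $W$ to force the generated group to be all of $\Sp(W)$ or $\SU(W)$. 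This requires combining the diameter-$2$ property, the $\FF{q_0}$-closedness, and the conjugation formula \cref{lem:conj_tr} to move transvection directions around within $W$ — essentially a local, bounded-size version of the global irreducibility argument of \cref{thm:irred-condition}. Everything after that is a direct appeal to the bounded-rank case of Babai's conjecture.
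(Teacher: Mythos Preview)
Your overall strategy --- reduce to a bounded-dimensional non-degenerate subspace $W$, verify that a bounded collection of transvection subgroups from $X$ generates $\Sp(W)$ or $\SU(W)$ via \cref{thm:irred-condition} and \cref{thm:Wagner}, then invoke \cref{cor:Babai-strong} --- is exactly what the paper does. But there are two concrete problems.

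First, your computation $f(a+b,a+b)=2f(a,b)$ is wrong for both form types under consideration. In the symplectic case $f$ is alternating, so $f(a+b,a+b)=0$ identically and \emph{every} vector is singular; the case $f(a,b)\neq 0$ is therefore very much live. In the unitary case $f(a+b,a+b)=f(a,b)+f(a,b)^{q_0}=\Tr(f(a,b))$, which can vanish with $f(a,b)\neq 0$. So the hypothesis of the lemma does \emph{not} force $\langle a,b\rangle$ to be totally isotropic. The paper treats $f(a,b)\neq 0$ as a separate (and easier) case: then $W=\langle a,b\rangle$ is already a non-degenerate plane, $\langle T_a,T_b\rangle\simeq \SL(2,\FF{q_0})\oplus 1_{W^\perp}$ by \cref{thm:Dickson_for_subsets}, and bounded-rank Babai finishes immediately.

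Second, in the genuinely totally isotropic case your construction of $W$ is backwards. You first fix an abstract non-degenerate $4$-space $W\supset\langle a,b\rangle$ via hyperbolic complements $a^*,b^*$, and then hope to manufacture transvections of $X^{(O(1))}$ with directions inside this preassigned $W$. There is no mechanism for that: conjugation moves directions around, but not into a space chosen in advance. The paper does the opposite: it \emph{defines} $W$ from directions already available in $_VX$. Using diameter $2$ it picks $c\in{_VX}$ with $f(c,a),f(c,b)\neq 0$; then $\langle a,b,c\rangle$ is $3$-dimensional with one-dimensional radical $\langle w\rangle$ where $w=-f(c,b)a+f(c,a)b$, and since $X_{V^*}$ spans $V^*$ there is $d\in{_VX}$ with $f(d,w)\neq 0$. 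Setting $W=\langle a,b,c,d\rangle$ gives a non-degenerate $4$-space for which $T_a,T_b,T_c,T_d\subset X$ already have their directions in $W$ by construction. Connectedness of $\Gamma(\{t_a,t_b,t_c,t_d\})$ and \cref{thm:irred-condition} then give irreducibility on $W$, and the rest goes through as you outlined. The ``bookkeeping obstacle'' you flagged is resolved precisely by choosing $W$ to fit the data rather than the data to fit $W$.
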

\begin{proof}
  If $\fl a\fr=\fl b\fr$, then $T_{a+b}=T_a=T_b$, so there is nothing
  to prove.  If $(t_a,t_b)$ is a (two-way) directed edge, then let
  $W=\fl a,b\fr$. Now, $V=W\oplus W^\perp$ where
  $W^\perp=\ker(\varphi_a)\cap \ker(\varphi_b)$, and the restriction
  to $W$ defines an isomorphism
  $\fl T_a,T_b\fr\rightarrow \fl (T_a)_W,(T_b)_W\fr \simeq \SL(2,K)$
	for some field $K \leq \FF q$. By \cref{thm:Dickson_for_subsets}, $K$
    must be equal to $\FF {q_0}$. Since
  $(T_{a+b})_W\leq \SL(2,\FF {q_0})$, we can apply the strong form of
  Babai's conjecture for $\SL(2,q_0)$ to get
  \[
    \ell_X(T_{a+b})\leq \ell_{T_a\cup T_b}(T_{a+b})=
    O\Big(\frac{\log|\SL(2,q_0)|}{\log|T_a\cup T_b|}\Big)=O(1).
  \]

  Now, let us assume that $(t_a,t_b)$ is not an edge.
  Since $X$ has diameter $2$, there exists a path $t_a,t_c,t_b$ in
  $\Gamma(X)$ for some $t_c\in X$. So $c\in\ _VX$ satisfies
  $f(c,a) \neq 0$, and $f(b,c) \neq 0$.

  Since $f(b,a)=0$, $f(a,a)=0$, $f(c,a) \neq 0$ we
  deduce that $c \not \in \fl a,b \fr$. This implies that
  $\langle a,b,c \rangle$ is a space of dimension $3$. Let
  \[w := -f(c,b)a+f(c,a)b.\] It is not hard to see that $\fl w \fr$ is
  the radical of the space $\fl a,b,c \fr$. Since $X_{V^*}$ generates
  $V^{*}$, there exists $t_d\in X$ with $f(d,w) \neq 0$. Therefore, at
  least one of $(t_a,t_d)$ and $(t_b,t_d)$ is an edge in $\Gamma(X)$.
  Let $W := \langle a,b,c,d \rangle \leq V$. We claim that the radical
  of $W$ is trivial. Indeed, $d \not \in \fl w \fr^{\perp}$ while
  $\fl a,b,c \fr$ is contained in $\fl w \fr^{\perp}$ so
  $\fl w \fr^{\perp}\cap W = \fl a,b,c \fr$. Thus the radical of $W$
  is contained in $\fl a,b,c \fr$ so it is contained in $\fl w
  \fr$. We conclude that the radical of $W$ is trivial since $w$ is
  not orthogonal to $d$.

  Let $X'=(T_a)_W\cup(T_b)_W\cup(T_c)_W\cup(T_d)_W$. We prove that it
  generates either $\Sp(W)\simeq \Sp(4,q)$ or $\SU(W)\simeq \SU(4,q)$.
  In order to do so it is enough to guarantee that the conditions of
  \cref{thm:Wagner} hold. Clearly, conditions (\ref{wagneritem1}) and
  (\ref{wagneritem3}) hold. The irreducibility of $X'$ will follow if
  we verify the conditions of \cref{thm:irred-condition}. Plainly,
  $a,b,c,d$ generates $W$ and the non-degeneracity of $W$ implies
  condition (2). The transvection graph induced by $X'$ is connected
  since $a$ and $b$ are not orthogonal to $c$ and $d$ is not
  orthogonal to at least one of $a$ and $b$. Thus the corresponding
  transvections in the transvection subgroups are connected and hence
  the induces subgraph of the transvection graph is strongly connected
  since every edge is two-way directed in this case.

  The bounded-rank
  case of the Babai conjecture (in its strong form) can be applied to
  deduce that
  \[
    \ell_X(T_{a+b})\leq \ell_{X'}((T_{a+b})_W)=
    O\Big(\frac{\log|\SL(4,q)|}{\log|X'|}\Big)=O(1).
  \]
\end{proof}
\begin{rem}\label{rem:symp-c=21}
  In the symplectic case, we managed to find elementary arguments to
  prove this Lemma with specific constant $c=21$ as follows.

  \textbf{Case 1, $\mathbf{(a,b)\in\E}$:} Let
  $T_a=(1+a\otimes \varphi_a)^{\FF q},\ T_b=(1+b\otimes
  \varphi_b)^{\FF q}$. Choosing $\lambda := \varphi_b(a)^{-1}$, we get
  $T_{a+b}=(1+\lambda b \otimes \varphi_b) (1+a \otimes
  \varphi_a)^{\FF q} (1-\lambda b \otimes \varphi_b)$, so
  $\ell_X(T_{a+b})\leq 3$.

  \textbf{Case 2, $\mathbf{(a,b)\notin\E}$:}
  Let $T_c,T_d\subset X$ as in the proof of \cref{lem:t_a+b}.

  First, let us assume that $f(c,a+b)\neq 0$. Since $(a,c)$ and
  $(b,c)$ are edges by construction, we have $\ell_X(T_{a+c})\leq 3$
  and $\ell_X(T_{b-c})\leq 3$ by Case 1. Then we have
  $f(a+c,b-c)=f(c,a+b)\neq 0$, so using Case 1 again we get that
  $\ell_{T_{a+c}\cup T_{b-c}}(T_{a+b})\leq 3$. Hence $\ell_X(T_{a+b})\leq 9$.

  Now, let us assume that $f(c,a+b)=0$. By construction,
  $f(d,a+b)\neq 0$, which implies that at least one of $f(d,a)$ and
  $f(d,b)$ is not zero. If both of them are not zero, then we can use
  the same argument as in the previous case (but using $d$ instead of
  $c$). So let us assume, say, $f(d,a)\neq 0$ and $f(d,b)=0$.

  Let $d'=d$ if $f(c,d)\neq 0$, while $d'=a+d$ if $f(c,d)=0$.
  By our assumptions and by Case 1,
  \[
    \ell_X(T_{d'})\leq 3,\quad (t_c,t_{d'}),
    (t_a,t_{d'}),(t_{a+b},t_{d'})\in \E\textrm{ but }(t_b,t_{d'})\notin\E.
  \]
  Using the construction of Case 1 again, we can deduce that
  $\ell_X(T_{\tau c+d'})\leq 5$ for every $\tau\in \FF q^\times$.  Since
  $q>2$, we may choose a $\tau\neq 0$ satisfying $f(\tau c+d',a)\neq 0$.
  Now, $t_{\tau c+d'}$ is a neighbour of each of $t_a,t_b,t_{a+b}$, so
  the argument of the first paragraph of Case 2 (but using
  $\tau c+d'$ instead of $c$) can be used to construct $t_{a+b}$. Using this
  construction in a careful way, one can show that $\ell_X(T_{a+b})\leq 21$.
\end{rem}
 Now, we are able to generate all transvections in the symplectic case.
\begin{thm}
  Let us assume that $G=\Sp(V)$ and  let $c$ be the constant as in
  \cref{lem:t_a+b}. Then we have $\ell_X(\mt)\leq cn^{\log_2(c)}$. In view of
  \cref{rem:symp-c=21}, the bound $\ell_X(\mt)\leq 21n^{4.4}$ holds.
\end{thm}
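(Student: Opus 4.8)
The plan is to iterate \cref{lem:t_a+b} in a divide-and-conquer fashion. Recall that by the reductions of \cref{sec:addition}, $X$ is $\FF{q_0}$-closed and satisfies $X=\bigcup\{T_v\mid v\in{_VX}\}$ with $\langle{_VX}\rangle=V$. Since every transvection lies in some $T_v$, it suffices to show that for every singular vector $v\in V$ we have $\ell_X(T_v)\leq cn^{\log_2 c}$; then, since each transvection in $G$ lies in a unique $T_v$, the same bound governs $\ell_X(\mt)$. So fix a singular $v$. Because ${_VX}$ spans $V$, we may write $v=\sum_{i=1}^k a_i$ with each $a_i\in{_VX}$ and $k\leq n$ (after discarding redundant terms, so in fact $k\le\dim V=n$); absorbing scalars into the $a_i$ is harmless since $T_{a_i}=T_{\lambda a_i}$.

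The key step is a recursive halving estimate. Split the sum $v=\sum_{i=1}^k a_i$ as $v=u+w$ where $u=\sum_{i=1}^{\lceil k/2\rceil}a_i$ and $w=\sum_{i=\lceil k/2\rceil+1}^{k}a_i$. The subtlety is that $u$ and $w$ need not be singular. To handle this, I would enlarge $X$ slightly first: apply \cref{lem:t_a+b} is stated for singular partial sums, but more carefully, one should track which partial sums $\sum_{i\in I}a_i$ (for $I$ an initial segment, or for a balanced binary-tree-indexed family of subsets) are singular and only break at those points — or, cleaner, observe that one can choose the ordering of the $a_i$ so that all partial sums in a fixed binary decomposition tree are singular, or modify the argument of \cref{lem:t_a+b} so that it works whenever the sum of the two vectors is singular even if the summands are not. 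Granting that, define $g(k)$ to be the worst-case value of $\ell_X(T_v)$ over all singular $v$ expressible as a sum of $k$ vectors from ${_VX}$. Then $g(1)\le 1$ (since $T_{a_i}\subseteq X$) and \cref{lem:t_a+b} gives the recursion $g(k)\le c\cdot g(\lceil k/2\rceil)$: generate $T_u$ in length $g(\lceil k/2\rceil)$ over $X$, generate $T_w$ in length $g(\lfloor k/2\rfloor)\le g(\lceil k/2\rceil)$ over $X$, and then $T_{u+w}=T_v$ has length at most $c$ over $T_u\cup T_w$ by \cref{lem:t_a+b}, using $\ell_X(Z)\le\ell_X(Y)\ell_Y(Z)$.

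Unwinding the recursion $g(k)\le c^{\lceil\log_2 k\rceil}g(1)\le c^{\lceil\log_2 k\rceil}$ and using $k\le n$ gives $\ell_X(T_v)\le c^{\lceil\log_2 n\rceil}\le c\cdot c^{\log_2 n}=c\cdot n^{\log_2 c}$. Since this holds for every singular $v$, we get $\ell_X(\mt)\le cn^{\log_2 c}$. Finally, substituting the explicit constant $c=21$ from \cref{rem:symp-c=21} — valid since $G=\Sp(V)$ and $q>2$ is odd — yields $\ell_X(\mt)\le 21\,n^{\log_2 21}\le 21\,n^{4.4}$, as $\log_2 21<4.4$.

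The main obstacle is the bookkeeping around non-singular partial sums: \cref{lem:t_a+b} as stated requires $a+b$ singular, so a naive midpoint split can fail. The cleanest fix is to note that in the symplectic case one has much freedom — e.g.\ one can perturb the decomposition $v=\sum a_i$ by a radical element, or reorder the $a_i$, so that the $\sim 2n$ nodes of a balanced binary decomposition tree all correspond to singular vectors — or, alternatively, to prove directly a mild strengthening of \cref{lem:t_a+b} covering the case when only the total $a+b$ is singular, by the same argument (embed into a non-degenerate $W$ of dimension $O(1)$ and invoke the bounded-rank strong Babai bound). Either way this is a routine but necessary adjustment; everything else is the clean recursion above.
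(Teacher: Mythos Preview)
Your divide-and-conquer recursion is exactly the paper's argument: write $v=\sum_{i=1}^k a_i$ with $a_i\in{_VX}$, split at $\lceil k/2\rceil$, apply \cref{lem:t_a+b} to the two halves, and unwind to get $\ell_X(T_v)\le c^{\lceil\log_2 k\rceil}\le cn^{\log_2 c}$.

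However, your ``main obstacle'' is illusory. The statement concerns $G=\Sp(V)$, where the defining form $f$ is alternating, so $f(w,w)=0$ for \emph{every} $w\in V$. Thus every partial sum $u=\sum_{i\le\lceil k/2\rceil}a_i$ and $w=\sum_{i>\lceil k/2\rceil}a_i$ is automatically singular, and \cref{lem:t_a+b} applies with no adjustment whatsoever --- no reordering, no perturbation, no strengthening of the lemma. (The singularity issue you anticipate is precisely what makes the \emph{unitary} case harder; there the paper needs a three-term version, \cref{lem:t_a+b+c}, together with \cref{lem:divide-to-singulars} to manufacture singular pieces.) Once you drop that paragraph, your proof is identical to the paper's.
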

\begin{proof}
  Since $\mt=\bigcup_{0\neq v\in V}T_v$ it is enough to generate $T_v$
  for any $0\neq v\in V$. Our argument is essentially the same as the
  proof of \cite[Lemma 4.12]{HZ}. For the convenience of the reader,
  we present a detailed proof.

  Let $v\neq 0$ be a vector in $V$.  Since $ _VX$ generates
  $V$, there are $a_1,\ldots,a_k\in _V X$
  such that $k\leq n$ and
  $v=\sum_{i=1}^k a_i$. Let $l(k)=\lceil \log_2 k\rceil$ so
  $l(k)$ is the smallest integer satisfying $k\leq 2^{l(k)}$.  We
  prove that $\ell_X(T_v)\leq c^{l(k)}\leq c n^{\log_2(c)}$ by
  using induction on $k$.

  The claim trivially holds for $k=1$. For an arbitrary $k\leq n$, let
  \[v_1=\sum_{i=1}^{\lceil k/2\rceil} a_i\textrm{ and }
    v_2=\sum_{i=\lceil k/2\rceil+1}^k a_i.\]
  Let $X'=X\cup T_{v_1}\cup T_{v_2}$.
  Since $l(\lceil k/2\rceil)\leq l(k)-1$, by induction we have
  $\ell_{X}(X')\leq c^{l(k)-1}$. On the other hand, $\ell_{X'}(T_v)\leq c$
  by \cref{lem:t_a+b}. So,
  $\ell_X(T_v)\leq \ell_X(X')\cdot \ell_{X'}(T_v)\leq c^{l(k)}$, as claimed.
\end{proof}
For the remainder, let $V$ be a non-degenerate hermitian space over
$\FF q$ and  $G=\SU(V)$. Now, $q_0=\sqrt q$, so $\FF q$ is a
$2$-dimensional $\FF {q_0}$-space.
\begin{lem}\label{lem:t_a+b+c}
  Let $T_{v_1},T_{v_2},T_{v_3}\subset X$ for some $v_1,v_2,v_3\in V$
  and let us assume that $v_1+v_2+v_3$ is singular. Then
  $\ell_X(T_{v_1+v_2+v_3})\leq c'$ for some constant $c'$.
\end{lem}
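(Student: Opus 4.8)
The plan is to adapt the proof of \cref{lem:t_a+b}: I would produce a non-degenerate subspace $W\le V$ whose dimension is bounded by an absolute constant, which contains $v_1+v_2+v_3$, and over which a bounded number of transvection subgroups already lying in $X$ generate $\SU(W)$ after restriction. Since $T_{v_1+v_2+v_3}$ acts trivially on $W^{\perp}$, it then lies in this copy of $\SU(W)$ inside $G$, and the bounded-rank form of Babai's conjecture (\cref{cor:Babai-strong}) gives $\ell_X(T_{v_1+v_2+v_3})=O(1)$. First a reduction: if $\dim\langle v_1,v_2,v_3\rangle\le 2$ then $v_1+v_2+v_3$ lies in $\langle w_1,w_2\rangle$ for two of the $v_i=:w_1,w_2$; writing $v_1+v_2+v_3=\alpha w_1+\beta w_2$ and using $T_v=T_{\lambda v}$ we have $T_{\alpha w_1}=T_{w_1}\subset X$ and $T_{\beta w_2}=T_{w_2}\subset X$, so \cref{lem:t_a+b} applied to $\alpha w_1,\beta w_2$ (or a triviality if a coefficient vanishes) gives $\ell_X(T_{v_1+v_2+v_3})\le c$. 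Hence I may assume $\langle v_1,v_2,v_3\rangle$ is $3$-dimensional.

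Next I would carry out the enlargement. Start from $W_0:=\langle v_1,v_2,v_3\rangle$ together with the subgroups $T_{v_1},T_{v_2},T_{v_3}\subset X$, and adjoin further transvection subgroups lying in $X$ (recall $X=\bigcup\{T_v\mid v\in {_VX}\}$) in three phases. (i) Since $L_5(X)=\FF q$ while $\FF{q_0}\subsetneq\FF q$, some cycle $(r_1,\dots,r_l)$ of $\Gamma(X)$ with $l\le 5$ satisfies $w(r_1,\dots,r_l)\notin\FF{q_0}$; writing $r_i=1+a_i\otimes\psi_i$, enlarge the span by $\langle a_1,\dots,a_l\rangle$ and adjoin the subgroups $T_{a_i}\subset X$. (ii) Using $\diam(\Gamma(X))\le 2$ (property \ref{con-diamXatmost2}), adjoin boundedly many further subgroups $T_c\subset X$ so that the graph on the chosen subgroups, with $T_u$ joined to $T_w$ exactly when $f(u,w)\ne 0$, becomes connected; as every edge is two-way in the unitary case, this makes the associated transvection graph strongly connected. (iii) As in the proof of \cref{lem:t_a+b}, while the current span $W'$ is degenerate I choose $0\ne w$ in its radical, use $\langle X_{V^*}\rangle=V^*$ to pick $t_d\in X$ with $f(d,w)\ne 0$, and adjoin $T_d\subset X$; the radical strictly shrinks each time, and $T_d$ is automatically joined to a previously chosen subgroup because $w$ lies in the current span, so after boundedly many steps the span $W$ is non-degenerate. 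Each phase performs $O(1)$ adjunctions, each raising the dimension by at most $1$, so $m:=\dim W$ is an absolute constant with $m\ge 3$; moreover $v_1+v_2+v_3\in W$, $W$ is non-degenerate, and restricting the finitely many chosen (and $\FF{q_0}$-closed) transvection subgroups to $W$ via $V=W\oplus W^{\perp}$ gives a set $X_W$ of unitary transvections of $W$ with strongly connected transvection graph.

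Then I would identify $\langle X_W\rangle$. By \cref{thm:irred-condition} it acts irreducibly on $W$ (the $V$-part spans $W$ because $v_1,v_2,v_3$ do, the $V^{*}$-part spans $W^{*}$ by non-degeneracy, strong connectedness was arranged). Its cycle weights generate $\FF q$: on any edge of $\Gamma(X_W)$, for $s=1+\lambda u\otimes\varphi_u$ and $t=1+\mu w\otimes\varphi_w$ with $f(u,w)\ne 0$ one has $w(s,t)=\lambda\mu\,f(u,w)^{q_0+1}$, where $\lambda\mu$ ranges over $\FF{q_0}^{\times}$ as $\lambda,\mu$ range over $\mf^{\times}=\FF{q_0}^{\times}\lambda_0$ (using $\lambda_0^{2}\in\FF{q_0}^{\times}$, which follows from $\Tr(\lambda_0)=0$) and $f(u,w)^{q_0+1}\in\FF{q_0}^{\times}$, so these $2$-cycle weights generate $\FF{q_0}$, and together with $w(r_1,\dots,r_l)$ they generate $\FF{q_0}\bigl(w(r_1,\dots,r_l)\bigr)=\FF q$. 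By \cref{thm:structure-from-cycles} (with $n=m\ge 3$), $\langle X_W\rangle$ is $\SL(m,q)$, $\SU(m,q)$ or $\Sp(m,q)$; since $X_W$ consists of unitary transvections, every cycle of $\Gamma(X_W)$ is unitary, whence $\langle X_W\rangle=\SU(W)\cong\SU(m,q)$. Finally $v_1+v_2+v_3\in W$ is singular, so $T_{v_1+v_2+v_3}$ (trivial on $W^{\perp}$) is a transvection subgroup of $\SU(W)$, hence $T_{v_1+v_2+v_3}\le\langle X_W\rangle$ inside $G$. Letting $Z\subset X$ be the union of the boundedly many transvection subgroups selected above, so $\langle Z\rangle\cong\SU(m,q)$ and $|Z|\ge q_0$, \cref{cor:Babai-strong} gives
\[
\ell_X(T_{v_1+v_2+v_3})\le\ell_Z(T_{v_1+v_2+v_3})\le\ell_Z(\langle Z\rangle)=O\left(\frac{\log|\SU(m,q)|}{\log|Z|}\right)^{c_0}=O(1),
\]
because $m$ is an absolute constant and $\log|\SU(m,q)|=O(\log q)=O(\log|Z|)$; this yields the lemma with $c'$ an absolute constant.

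The step I expect to be the main obstacle is the enlargement in the second paragraph: securing simultaneously that $W$ contains $v_1+v_2+v_3$, is non-degenerate, has strongly connected transvection graph, and has cycle weights generating $\FF q$, all while $\dim W$ stays bounded by an absolute constant. One must interleave adjoining a short non-$\FF{q_0}$ cycle, a handful of connectors, and a handful of radical-killing vectors, and check that these operations do not conflict — the essential point being that each radical-killing vector remains joined to the rest because the chosen radical vector already lies in the current span. Everything else is a faithful transcription of the proof of \cref{lem:t_a+b} combined with \cref{thm:irred-condition}, \cref{thm:structure-from-cycles} and \cref{cor:Babai-strong}.
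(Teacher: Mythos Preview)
Your argument is correct, but the route differs from the paper's.  The paper avoids your enlargement procedure entirely: it performs a short case analysis on the values $\alpha_{ij}=f(v_i,v_j)$.  If some $\Tr(\alpha_{ij})=0$ then $v_i+v_j$ is already singular and two applications of \cref{lem:t_a+b} finish; if some ratio $\alpha_{ki}/\alpha_{ji}\notin\FF{q_0}$ then an affine–line argument in the $\FF{q_0}$-plane $\FF q$ produces $\lambda$ with both $\lambda v_1+v_2$ and $(1-\lambda)v_1+v_3$ singular, again reducing to \cref{lem:t_a+b}; and in the remaining case one checks directly that $W=\langle v_1,v_2,v_3\rangle$ is already a non-degenerate $3$-space on which $T_{v_1}\cup T_{v_2}\cup T_{v_3}$ generates $\SU(W)$, so bounded-rank Babai applies with $m=3$.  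What the paper's approach buys is explicitness (no enlargement, dimension at most $3$ or two calls to the sum-of-two lemma) and a transparent use of the Hermitian arithmetic; what your approach buys is uniformity --- the same ``adjoin a short non-$\FF{q_0}$ cycle, connect, then kill the radical'' template would prove the $k$-term analogue directly for any fixed $k$, without the delicate case split.  One point to make precise if you write it up: your claim that each radical-killing step strictly shrinks the radical is true (adding a singular $d$ with $f(d,w)\ne0$ for $w\in\mathrm{rad}(W')$ raises the rank of $f|_{W'}$ by at least $2$, since both $\Phi(w)$ and $\Phi(d)$ are new independent functionals on $W''$), but it deserves the two-line verification rather than an assertion, as it is exactly what bounds the number of iterations of phase~(iii).
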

\begin{proof}
  Let $\alpha_{ij}=f(v_i,v_j)$ for every $i\neq j$.

  First, let us assume that $\Tr(\alpha_{ij})=0$ for some $i\neq j$.
  If, for example, $\Tr(\alpha_{12})=0$, then
  $f(v_1+v_2,v_1+v_2)=\alpha_{12}+\alpha_{21}=\Tr(\alpha_{12})=0$, so
  $v_1+v_2$ is singular. Applying \cref{lem:t_a+b} twice, we get that
  $\ell_X(T_{v_1+v_2+v_3})\leq \ell_X(T_{v_1+v_2})\cdot \ell_{X\cup
    T_{v_1+v_2}}(T_{v_1+v_2+v_3})\leq c^2$.  Thus, for the remainder
  we assume that $\Tr(\alpha_{ij}) \ne 0$ for every $i\neq j$.  In
  particular, each $\alpha_{ij}\neq 0$ ($i\neq j$), that is,
  $t_{v_1}, t_{v_2}, t_{v_3}$ is a triangle in $\Gamma(X)$.

  Now, let us assume that $\alpha_{31}/\alpha_{21}\notin \FF {q_0}$.
  We claim that there is a $\lambda\in \FF q$ such that both
  $\lambda v_1+v_2$ and $(1-\lambda)v_1+v_3$ are singular.
  For any $\lambda\in \FF q$ we have
  \[
    \lambda v_1+v_2\textrm{ is singular }\iff \Tr(\lambda \alpha_{21})=
    f(\lambda v_1+v_2,\lambda v_1+v_2)=0.
  \]
  Similarly,
  \[
    (1-\lambda)v_1+v_3 \textrm{ is singular} \iff
    \Tr((1-\lambda)\alpha_{31})=0\iff \Tr(\lambda \alpha_{31})=\Tr(\alpha_{31}).
  \]
  For any $\gamma\in \FF q^\times$, the function
  $x\rightarrow \Tr(x\gamma)$ is an $\FF {q_0}$-linear map from
  $\FF q$ onto $\FF {q_0}$, so
  $\{x\in \FF q\,|\,\Tr(x\gamma)=\delta\}\subset \mathbb{F}_q$ is an
  affine line of the $\FF {q_0}$-space $\FF q$.  Thus, we need a
  $\lambda\in \FF q$, which is in the intersection of the affine lines
  \[
    \{x\in \FF q\,|\,\Tr(x\alpha_{21})=0\}\textrm{ and }
    \{x\in \FF q\,|\,\Tr(x\alpha_{31})=\Tr(\alpha_{31}).\}
  \]
  Our assumption $\alpha_{31}/\alpha_{21}\notin \FF{q_0}$ exactly
  means that these lines are not parallel, so there is a unique
  such $\lambda$. Using \cref{lem:t_a+b} again, we get that
  $\ell_X(T_{v_1+v_2+v_3})\leq \ell_X(T_{\lambda v_1+v_2})\cdot \ell_{X\cup
    T_{\lambda v_1+v_2}}(T_{v_1+v_2+v_3})\leq c^2$.
  Applying any permutation of the indeces we can also
  assume that $\alpha_{12}/\alpha_{32}$ and $\alpha_{23}/\alpha_{13}$ are
  in $\FF {q_0}$ for the remainder.

  Finally, let
  $k:=\alpha_{31}/\alpha_{21}=\alpha_{13}/\alpha_{12}\in \FF {q_0}^{\times}$,
  $l:=\alpha_{23}/\alpha_{13}=\alpha_{32}/\alpha_{31}\in \FF {q_0}^{\times}$,
  and $m:=\alpha_{12}/\alpha_{32}=\alpha_{21}/\alpha_{23}\in \FF {q_0}^{\times}$.
  Then we have
  \begin{align*}
    \alpha_{21}&=m\alpha_{23}=lm\alpha_{13}=klm\alpha_{12},\\
    \alpha_{13}&=\frac{\alpha_{12}}{\alpha_{21}}\alpha_{31}=
    \frac{\alpha_{31}}{klm},\\
    \alpha_{32}&=\frac{\alpha_{31}}{\alpha_{13}}\alpha_{23}=klm\alpha_{23}.
  \end{align*}
  Furthermore, $0\neq \Tr(\alpha_{12})=\alpha_{12}(1+klm)$, so $klm \neq -1$.
  Thus, we get
  \[
    \begin{vmatrix}
      0&\varphi_{v_1}(v_2)&\varphi_{v_1}(v_3)\\
      \varphi_{v_2}(v_1)&0&\varphi_{v_2}(v_3)\\
      \varphi_{v_3}(v_1)&\varphi_{v_3}(v_2)&0
    \end{vmatrix}=\alpha_{12}\alpha_{23}\alpha_{31}+
    \alpha_{21}\alpha_{32}\alpha_{13}=\alpha_{12}\alpha_{23}\alpha_{31}(1+klm)
    \neq 0.
  \]
  It follows that $W:=\fl v_1,v_2,v_3\fr$ is a non-degenerate
  $3$-dimensional subspace of $V$ and $V=W\oplus W^\perp$. Thus,
  $T_{v_i}=(T_{v_i})_W\oplus 1_{W^\perp}$ for each $i$, and
  $T_{v_1+v_2+v_3}=(T_{v_1+v_2+v_3})_W\oplus 1_{W^\perp}$. Defining
  $X'=(T_{v_1})_W\cup (T_{v_2})_W\cup(T_{v_3})_W\subset \SU(W)$ we
  have $\fl X'\fr=\SU(W)$ by \cref{thm:irred-condition} and
  \cref{thm:Wagner}, so we can apply the strong form of Babai's
  conjecture to deduce that
  \[
    \ell_X(T_{v_1+v_2+v_3})\leq \ell_{X'}((T_{v_1+v_2+v_3})_W)\leq
    O\Big(\frac{\log|\SU(3,q)|}{\log|X'|}\Big)=O(1).
  \]
  The proof is complete.
\end{proof}
\begin{lem}\label{lem:divide-to-singulars}
  Let $v=\sum_{i=1}^sv_i$ for some singular vectors
  $v_1,\ldots,v_s\in V$.  Then $v-\lambda v_i$ is singular for
  some $1\leq i\leq s$ and for some $\lambda\in \FF{q_0}$.
\end{lem}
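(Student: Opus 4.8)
The plan is to evaluate the hermitian form $f$ on the vector $v-\lambda v_i$ and then choose the scalar $\lambda$ so as to make it singular. Set $\beta:=f(v,v)$. Since $f$ is conjugate-symmetric we have $\beta=f(v,v)=\overline{f(v,v)}=\beta^{q_0}$, so $\beta\in\FF{q_0}$. This already suggests why it should be possible to peel off a multiple $\lambda v_i$ with $\lambda$ in the subfield $\FF{q_0}$ rather than in all of $\FF q$: the quantity we need to annihilate already lives in $\FF{q_0}$.

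Next I would record, for any index $i$ and any $\lambda\in\FF{q_0}$ (so that $\overline\lambda=\lambda$), the identity
\[
  f(v-\lambda v_i,\,v-\lambda v_i)
  = f(v,v)-\lambda f(v,v_i)-\lambda f(v_i,v)+\lambda^2 f(v_i,v_i)
  = \beta-\lambda\,\Tr\!\big(f(v_i,v)\big),
\]
using that $v_i$ is singular, hence $f(v_i,v_i)=0$, and that $f(v,v_i)+f(v_i,v)=f(v_i,v)^{q_0}+f(v_i,v)=\Tr(f(v_i,v))$. Put $\gamma_i:=\Tr(f(v_i,v))\in\FF{q_0}$.

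If $\beta=0$ then $v$ itself is singular, and $\lambda=0$, $i=1$ does the job. Otherwise I would sum the $\gamma_i$ over $i$: by additivity of $f$ in its first argument and of the trace,
\[
  \sum_{i=1}^{s}\gamma_i=\Tr\!\Big(f\big(\textstyle\sum_{i=1}^{s}v_i,\ v\big)\Big)=\Tr(f(v,v))=\Tr(\beta)=\beta+\beta^{q_0}=2\beta .
\]
Since $q$ is odd and $\beta\neq0$ we get $2\beta\neq0$, so there is an index $i$ with $\gamma_i\neq0$. For such an $i$ set $\lambda:=\beta/\gamma_i\in\FF{q_0}$; then $f(v-\lambda v_i,\,v-\lambda v_i)=\beta-\lambda\gamma_i=0$, so $v-\lambda v_i$ is singular, as required.

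I do not expect a genuine obstacle here: it is a short bilinear-algebra computation. The only points demanding care are the two bookkeeping facts that $\beta\in\FF{q_0}$ and each $\gamma_i\in\FF{q_0}$ — these are precisely what guarantee the chosen $\lambda$ lies in $\FF{q_0}$ — and the separate treatment of the degenerate case $\beta=0$. The hypothesis that $q$ is odd enters exactly once, in deducing $2\beta\neq0$.
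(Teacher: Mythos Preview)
Your proof is correct and follows essentially the same route as the paper's: expand $f(v-\lambda v_i,v-\lambda v_i)=f(v,v)-\lambda\,\Tr(f(v_i,v))$, observe that $\sum_i\Tr(f(v_i,v))=\Tr(f(v,v))=2f(v,v)\neq 0$ when $v$ is non-singular, pick $i$ with $\Tr(f(v_i,v))\neq 0$, and set $\lambda=f(v,v)/\Tr(f(v_i,v))$. Your version is slightly more explicit in noting that $\beta=f(v,v)$ and each $\gamma_i$ lie in $\FF{q_0}$, which is exactly what ensures $\lambda\in\FF{q_0}$; the paper leaves this implicit.
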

\begin{proof}
  If $v$ is singular, then we can take $\lambda=0$. Otherwise,
  $0\neq 2f(v,v)=\sum_{i=1}^s \Tr(f(v,v_i))$, so there is an $i$ such
  that $\Tr(f(v,v_i))\neq 0$. Choosing
  $\lambda=\frac{f(v,v)}{\Tr(f(v,v_i))} \in \FF{q_0}$ we get that
  $f(v-\lambda v_i,v-\lambda v_i)=f(v,v)-\lambda f(v_i,v)- \lambda
  f(v,v_i)=f(v,v)-\lambda\Tr(f(v,v_i))=0$.
\end{proof}
\begin{thm}
  Let us assume that $G=\SU(V)$ and let $c$ be the constant as in
  \cref{lem:t_a+b+c}. Then we have $\ell_X(\mt)\leq O(n^{2\log_2(c)})$.
\end{thm}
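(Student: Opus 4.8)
The plan is to imitate the symplectic argument by a balanced divide-and-conquer on the number of singular summands, inserting a \emph{singularity-correction} step (via \cref{lem:divide-to-singulars}) and recombining the repaired pieces with the three-term \cref{lem:t_a+b+c}.

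First I would set up the bookkeeping exactly as in the symplectic case. Since $X=\bigcup\{T_w:w\in {}_VX\}$ and ${}_VX$ generates $V$, every singular $0\neq v\in V$ can be written as $v=\sum_{i=1}^{k}a_i$ with $k\leq n$, each $a_i$ a nonzero singular vector whose full transvection subgroup $T_{a_i}$ lies in $X$ (take the $a_i$ to be $\FF q$-multiples of vectors in ${}_VX$; this does not change the transvection subgroups). Call a singular vector admitting such an expression \emph{of order $\leq k$}, and let $N(k):=\sup\{\ell_X(T_v):v\text{ singular of order}\leq k\}$. Since $\mt=\bigcup_{0\neq v}T_v$, it suffices to prove $N(n)=O(n^{\log_2 c})$ (which is a fortiori $O(n^{2\log_2 c})$); here I let $c$ be a constant at least as large as the constants appearing in \cref{lem:t_a+b,lem:t_a+b+c}. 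The base cases are immediate: $N(1)\leq 1$ because then $T_v\subseteq X$, $N(2)\leq c$ by \cref{lem:t_a+b}, and $N(3)\leq c$ by \cref{lem:t_a+b+c}.

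For the inductive step, let $k\geq 4$ and $v=\sum_{i=1}^{k}a_i$ be singular of order $k$. Put $w_1:=\sum_{i\leq\lceil k/2\rceil}a_i$ and $w_2:=v-w_1$. By \cref{lem:divide-to-singulars} applied to $w_1$ there are an index $i_1$ and $\lambda_1\in\FF{q_0}$ with $u_1:=w_1-\lambda_1 a_{i_1}$ singular; then $u_1$ has order $\leq\lceil k/2\rceil$. Now $v-u_1=w_2+\lambda_1 a_{i_1}$ is again a sum of $\leq\lfloor k/2\rfloor+1$ singular vectors with transvection subgroups in $X$, so \cref{lem:divide-to-singulars} yields $\lambda_2\in\FF{q_0}$ and one of these vectors, of line $\langle a_j\rangle$, with $u_2:=(v-u_1)-\lambda_2 a_j$ singular of order $\leq\lfloor k/2\rfloor+1$. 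Thus $v=u_1+u_2+\lambda_2 a_j$ with $u_1,u_2$ singular and $T_{\lambda_2 a_j}=T_{a_j}\subseteq X$ (if $\lambda_2=0$ then $v-u_1$ was already singular and one glues $v=u_1+u_2$ by the two-term \cref{lem:t_a+b} instead). The set $X':=X\cup T_{u_1}\cup T_{u_2}$ still satisfies all standing hypotheses on $X$ — it is $\FF{q_0}$-closed, contains $X$, and the diameter bounds \ref{con-diamXatmost2}, \ref{con-2waydiamXatmost6} were stated for all supersets of $X$ — so \cref{lem:t_a+b+c} applies with $X'$ as generating set and gives $\ell_{X'}(T_v)\leq c$. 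Hence
$$\ell_X(T_v)\leq \ell_X(X')\cdot \ell_{X'}(T_v)\leq \max\!\big(\ell_X(T_{u_1}),\ell_X(T_{u_2})\big)\cdot c\leq c\cdot N\!\big(\lfloor k/2\rfloor+1\big),$$
so $N(k)\leq c\cdot N(\lfloor k/2\rfloor+1)$ for all $k\geq 4$. Since $\lfloor k/2\rfloor+1<k$ for $k\geq 4$ and the iteration $k\mapsto\lfloor k/2\rfloor+1$ reaches a value $\leq 3$ after at most $\log_2 k+O(1)$ steps, unwinding the recursion gives $N(k)\leq c^{O(1)}k^{\log_2 c}$; taking $k=n$ finishes the proof.

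The one genuinely new point compared with the symplectic case — and hence the main obstacle — is the singularity correction: blind halving $v=w_1+w_2$ need not produce singular halves, so $T_{w_1},T_{w_2}$ need not even be defined. \cref{lem:divide-to-singulars} repairs the first half at the price of one extra singular summand (the ``carry'' $\lambda_1 a_{i_1}$), which is moved into the second half before that half is repaired in turn; since both repairs merely rescale vectors already present among $a_1,\dots,a_k$, the order of each recursive piece stays $\leq\lfloor k/2\rfloor+1<k$, and the three-term \cref{lem:t_a+b+c} is precisely what is needed to recombine the two repaired halves together with the leftover carry. The only routine verification is that adjoining the finitely many transvection subgroups $T_{u_1},T_{u_2}$ encountered along the recursion preserves the hypotheses under which \cref{lem:t_a+b,lem:t_a+b+c} were proved, which holds because transvection subgroups are $\FF{q_0}$-closed and the relevant diameter properties were stated hereditarily.
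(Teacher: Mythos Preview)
Your argument is correct and follows essentially the same route as the paper: halve the index set, apply \cref{lem:divide-to-singulars} to make the first half singular, push the carry into the second half, repair that half with \cref{lem:divide-to-singulars} again, and recombine the three singular pieces via \cref{lem:t_a+b+c}. The only difference is bookkeeping: the paper bounds the recursion crudely via $\lceil k/2\rceil+1\leq k/\sqrt{2}$ (valid for $k\geq 10$) and hence reaches $O(n^{2\log_2 c})$, whereas your direct analysis of the iteration $k\mapsto\lfloor k/2\rfloor+1$ gives the sharper $O(n^{\log_2 c})$, which of course still implies the stated theorem.
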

\begin{proof}
  Let $v\in V$ be any non-zero singular vector. We need to generate $T_v$
  from $X$. As in the symplectic case, since ${_V}X$ generates $V$,
	there exist $a_1,\ldots,a_k \in {_V}X$ such that $k \leq n$ and
  $v=\sum_{i=1}^k  a_i$, $T_{a_i}\subset X$ for every $1\leq i\leq k$.
  Using the notation $l(k)=2 \log_2 k$, our goal is to prove that
  $\ell_X(T_v)\leq O(c^{l(k)})= O(n^{2\log_2(c)})$.

  Previously we proved this claim for $k\leq 3$. For an arbitrary
  $4\leq k\leq n$ our goal is to write $v$ as a sum of 3 singular
  vectors $v_1,v_2,v_3$ two of which are a linear combination of
  roughly $k/2$ many $a_i$'s.  First we consider the decomposition
  $v=u_1+u_2$ with
  \[u_1=\sum_{i=1}^{\lceil k/2\rceil}  a_i\textrm{ and }
    u_2=\sum_{i=\lceil k/2\rceil+1}^k  a_i.\]

  Using \cref{lem:divide-to-singulars} with, say, $i=1$ (which we can
  assume), we get that $v_1:=u_1-\lambda a_1$ is singular for some
  $\lambda\in \FF {q_0}$.  Now, if $(1+\lambda)a_1+u_2$ is singular,
  then we choose $v_2=(1+\lambda)a_1+u_2,\,v_3=0$. Finally, if
  $(1+\lambda)a_1+u_2=(1+\lambda)a_1+\sum_{i=\lceil k/2\rceil+1}^k
  a_i$ is not singular, then we apply \cref{lem:divide-to-singulars}
  again to write $(1+\lambda)a_1+u_2=v_2+v_3$ with singular vectors
  $v_2=\mu a_s$, $v_3=(1+\lambda)a_1+u_2-\mu a_s$ for some
  $s\in\{1,\lceil k/2\rceil+1,\ldots,k\}$ and for some
  $\mu \in \FF {q_0}$.

  Let $X'=X\cup T_{v_1}\cup T_{v_2}\cup T_{v_3}$. By our construction,
  each $v_i$ is a linear combination of at most $\lceil k/2\rceil+1<k$
  many $a_i$'s. Now, if $k$ is bounded, then $\ell_X(T_v)=O(1)$ follows by
  a repeated application of \cref{lem:t_a+b+c}.

  So, we can assume that $k\geq 10> 4/(\sqrt 2-1)$. Then
  $\lceil k/2\rceil+1\leq k/2+2\leq k/\sqrt 2$,
  that is, $l(\lceil k/2\rceil+1)\leq l( k/\sqrt 2)=2\log_2(k/\sqrt 2)=l(k)-1$.
  Using an induction argument and \cref{lem:t_a+b+c}, we get that
  $\ell_X(T_v)\leq \ell_{X}(X')\cdot \ell_{X'}(T_v)\leq O(c^{l(k)-1})\cdot c
  =O(c^{l(k)})$. The result follows.
\end{proof}
In order to obtain our main result it is enough to see that
$l_{\mt}(G)=O(\frac{\log |G|}{\log |\mt|})$. Since $\mt$ is a
conjugacy class, one can see that this holds by a result of Liebeck
and Shalev \cite{Liebeck-Shalev}. On the other hand
$l_{\mt}(G)=O(n^2)$ can easily be proved using explicit Gaussian
elimination-like algorithm.
\subsection{Proof of  \cref{thm:X-contains-K-tr-sgrp}}
The proof of  \cref{thm:X-contains-K-tr-sgrp} follows easily from our
previous proof. We only give a sketch here.

The only new thing we need is the following modification of
\cref{thm:Dickson_for_subsets} in our situation. 
\begin{prop}\label{prop:Dickson-with-K0-tr-sgrp}
  With the assumptions of \cref{thm:X-contains-K-tr-sgrp},
  let $(s,t)\in \E$ and $W=\fl s,t\fr$. Then
  $H:=\fl s^{K_0},t\fr\simeq Sp(W)\oplus 1_{W^\perp}$ or
  $SU(W)\oplus 1_{W^\perp}$, moreover
  $\diam(\Cay(H,\{s^{K_0},t\}))=O(1)$.
\end{prop}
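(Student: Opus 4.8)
The plan is to imitate the proof of \cref{thm:Dickson_for_subsets}, reducing everything to a computation inside $\SL(2,K_0)$: since $s^{K_0}$ is already a full transvection subgroup over $K_0$, no field needs to be built up, so Dickson's classification can be bypassed entirely. Write $s=1+\lambda v\otimes\varphi_v$ and $t=1+\nu w\otimes\varphi_w$, with directions $v,w$. As $(s,t)\in\E$ and every edge is two-way directed in the symplectic and unitary cases, $f(v,w)\neq 0$; hence $W=\fl v,w\fr$ is a non-degenerate plane, $V=W\oplus W^\perp$, every element of $H$ fixes $W^\perp$ pointwise and stabilises $W$ (as do all the generators), and restriction to $W$ embeds $H$ into $\Sp(W)=\Sp(2,K)=\SL(2,K)$ in the symplectic case and into $\SU(W)\cong\SL(2,K_0)$ in the unitary case (the plane $W$ is hyperbolic, containing the singular vector $v$); moreover $(s^{K_0})_W$ becomes a full transvection subgroup with direction $v$. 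As in the proof of \cref{thm:Dickson_for_subsets} I would pick a basis of $W$ in which
\[
  (s^{K_0})_W=\Bigl\{\begin{pmatrix}1&\kappa\\0&1\end{pmatrix}:\kappa\in K_0\Bigr\},
  \qquad t_W=\begin{pmatrix}1&0\\\mu&1\end{pmatrix},\qquad \mu=w(s,t).
\]
The only point where the hypotheses enter is the claim $\mu\in K_0$: it is vacuous when $K_0=K$, and in the unitary case it follows from $w(s,t)=\lambda\nu\cdot f(w,v)\overline{f(w,v)}$ with $\lambda\nu\in K_0$ (because $\overline\lambda=-\lambda$ and $\overline\nu=-\nu$) and $f(w,v)\overline{f(w,v)}=N_{K/K_0}(f(w,v))\in K_0$; equivalently, a $2$-cycle of transvections of $G$ is always unitary (cf.\ \cref{rem:ds-du-2}).

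Next I would form the monomial matrix
\[
  n:=\begin{pmatrix}1&-\mu^{-1}\\0&1\end{pmatrix}\,t_W\,\begin{pmatrix}1&-\mu^{-1}\\0&1\end{pmatrix}=\begin{pmatrix}0&-\mu^{-1}\\\mu&0\end{pmatrix}.
\]
Since $-\mu^{-1}\in K_0$, both $n$ and $n^{-1}$ are words of length at most $3$ over $(s^{K_0})_W\cup\{t_W\}$ (for $n^{-1}$ use $t_W^{-1}$), so $n\in H$; and a direct computation gives $n\,\bigl(\begin{smallmatrix}1&\kappa\\0&1\end{smallmatrix}\bigr)\,n^{-1}=\bigl(\begin{smallmatrix}1&0\\-\mu^2\kappa&1\end{smallmatrix}\bigr)$, so, as $\mu^2\in K_0^\times$, the conjugate $U^-:=n\,(s^{K_0})_W\,n^{-1}$ is the full opposite transvection subgroup over $K_0$. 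Thus $H$ contains the two opposite full unipotent subgroups over $K_0$. Now the Bruhat-type identity
\[
  \begin{pmatrix}a&b\\c&d\end{pmatrix}=
  \begin{pmatrix}1&(a-1)c^{-1}\\0&1\end{pmatrix}
  \begin{pmatrix}1&0\\c&1\end{pmatrix}
  \begin{pmatrix}1&(d-1)c^{-1}\\0&1\end{pmatrix}\qquad(c\neq 0),
\]
supplemented by one left multiplication by a lower unipotent when $c=0$, shows that every element of $\SL(2,K_0)$ is a product of at most four factors lying in these two unipotent subgroups; hence the subgroup they generate is all of $\SL(2,K_0)$, which equals $\Sp(W)$ when $K_0=K$ and equals $\SU(W)$ in the unitary case. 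Therefore $H=\Sp(W)\oplus 1_{W^\perp}$ or $H=\SU(W)\oplus 1_{W^\perp}$, proving the first assertion.

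For the diameter bound I would read off the same decomposition: each of the at most four factors above has length $O(1)$ over $(s^{K_0})_W\cup\{t_W\}$ --- length $1$ for the factors in $(s^{K_0})_W$, and length at most $7$ for the factors in $U^-$, writing such a factor as a conjugate by $n$ of an element of $(s^{K_0})_W$ --- so $\diam(\Cay(H,\{s^{K_0},t\}))=O(1)$. The only genuine work in all of this is the bookkeeping of which scalars lie in $K_0$ (the entries of the transvections, of $n$, and of the Bruhat factors), so that every matrix produced is actually realised by an element of $H$; the single nontrivial input is $\mu=w(s,t)\in K_0$. Unlike \cref{thm:Dickson_for_subsets}, this argument never invokes Dickson's classification, so it has no exceptional cases (in particular it holds when $\kar K=2$ and when $K_0=\FF 9$) and it works verbatim for infinite $K$.
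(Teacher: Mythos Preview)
The paper does not actually prove this proposition; it is stated in the final section with the remark that it is ``the following modification of \cref{thm:Dickson_for_subsets} in our situation'', and the proof is left to the reader. Your argument is correct and is in fact cleaner than a literal adaptation of the proof of \cref{thm:Dickson_for_subsets}: that proof routes through Dickson's list of subgroups of $\PSL(2,q)$, which is tied to finite fields and carries the $\FF 9$ exception, whereas your explicit Bruhat decomposition works uniformly for arbitrary $K$ and gives an explicit constant for the diameter. Since \cref{thm:X-contains-K-tr-sgrp} is stated for infinite fields as well, an argument of your type is in any case what is needed.

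One step deserves a line of justification. In the unitary case you pass from ``$\langle U^+,U^-\rangle=\SL(2,K_0)$ as matrices in this basis'' to ``$H_W=\SU(W)$'' by asserting that $\SL(2,K_0)$ \emph{equals} $\SU(W)$. The abstract isomorphism $\SU(W)\cong\SL(2,K_0)$ you mention does not by itself give this; it only yields $\SL(2,K_0)\leq H_W\leq\SU(W)$, which does not collapse for infinite $K_0$ without more. The missing observation is that in your basis the Gram matrix of $f|_W$ is $\gamma\bigl(\begin{smallmatrix}0&1\\-1&0\end{smallmatrix}\bigr)$ with $\gamma=f(e_1,e_2)$ a $K_0^\times$-multiple of the trace-zero scalar $\lambda$, hence $\bar\gamma=-\gamma$. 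For $g$ with $K_0$-entries one has $\bar g=g$, so the hermitian condition reduces to $g^T\bigl(\begin{smallmatrix}0&1\\-1&0\end{smallmatrix}\bigr)g=\bigl(\begin{smallmatrix}0&1\\-1&0\end{smallmatrix}\bigr)$, i.e.\ $\det g=1$, giving $\SL(2,K_0)\leq\SU(W)$. Conversely, for $g=\bigl(\begin{smallmatrix}a&b\\c&d\end{smallmatrix}\bigr)\in\SU(W)$ with $c\neq 0$, the isotropy of $ge_1$ gives $a\bar c=\bar a c$, so $a/c\in K_0$; comparing $\bar a d-b\bar c=1$ with $ad-bc=1$ then forces $c=\bar c$, hence $a,c\in K_0$, and the remaining conditions give $b,d\in K_0$ (the case $c=0$ is easier). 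Thus $\SU(W)=\SL(2,K_0)$ in this basis, and your argument goes through.
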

Now, in order to prove \cref{thm:X-contains-K-tr-sgrp}, first we can
apply a modification of the argument of \cref{lem:Xconj} to get a
generating set of transvections $X$ for $G$ (containing a
transvections subgroup over $K_0$), in length $O(n^2)$ (for details,
see the second paragraph of the proof of \cite[Lemma 4.2]{HZ}). After
that, we can use the arguments of \cref{sec:dec-diameter}, to enlarge
$X$ in order to guarantee that the diameter of $\Gamma(X')$ is $2$ for
every $X\subset X'\subset \mt$.  Now, we need only use the first
paragraph of the proof of \cref{lem:M-closure-of-X} along with
\cref{prop:Dickson-with-K0-tr-sgrp}, to generate the $K_0$-closure of
$X$, even in length $O(1)$. After that, we can use (a modification of)
the arguments of \cref{sec:addition} to get all the transvections.


\begin{thebibliography}{99}
\bibitem{Artin} E.~Artin, \emph{Geometric algebra.}
  Interscience Publishers, Inc., New York-London, 1957.
\bibitem{BS} L.~Babai and \'A.Seress, On the diameter of permutation
  groups. \emph{European J. Combin.} \textbf{13} (1992), 231--243.
\bibitem{BDH} J.~Bajpai, D.~Dona, H. A.~ Helfgott, Growth estimates and diameter bounds for classical Chevalley groups. arXiv preprint arXiv:2110.02942. (2021).
\bibitem{by} A.~Biswas, Y.~Yang, A diameter bound for finite
  simple groups of large rank. \emph{J. Lond. Math. Soc.} (2) \textbf{95}
  (2017), 455--474.
\bibitem{Breuillard-etal} E.~Breuillard, B.~Green and T.~Tao,
  Approximate subgroups of linear groups, \emph{Geom. Funct. Anal.}
  \textbf{21} (2011), 774--819.
\bibitem{BH} R.~Brown and S.~P.~Humphries, Orbits under symplectic
  transvections I, \emph{Proc. London Math. Soc.} (3) {\bf 52} (1986), 517--531.
\bibitem{Dimartino-etal} L.~Di Martino, A.~Previtali and R.~Radina,
  Sets of transvections generating subgroups isomorphic to
    special linear groups, \emph{Comm. Algebra} \textbf{33} (2005),
  1663--1691.
\bibitem{Dickson} L.~E.~Dickson, \emph{Linear groups: With an
    exposition of the Galois field theory.  With an introduction by
    W. Magnus.} Dover Publications, Inc., New York 1958.
\bibitem{EJ} S.~Eberhard and U.~Jezernik, Babai's conjecture for
    high-rank classical groups with random generators,
    Invent. Math. \textbf{227} (2022), 149--210.
\bibitem{Gorenstein} D.~Gorenstein, \emph{Finite groups.  Second
  edition.} Chelsea Publishing Co., New York, 1980.
\bibitem{HZ} Z.~Halasi, Diameter of Cayley graphs of $\SL(n,p)$
  with generating sets containing a transvection,
   \emph{J. Algebra} \textbf{569} (2021), 195--219.
 \bibitem{hmpq} Z.~Halasi , A.~Mar\'oti, L.~Pyber, Y.~Qiao, An
   improved diameter bound for finite simple groups of Lie
   type. \emph{Bull. Lond. Math. Soc.} \textbf{51} (2019), 645--657.
\bibitem{Helfgott} H.~A.~Helfgott, Growth and generation in
  $\SL_2(\ZZ/p\ZZ)$, \emph{Ann. of Math.} (2) \textbf{167} (2008),
  601--623.
\bibitem{hs} H.~A.~Helfgott, \'A.~Seress, On the diameter of
  permutation groups. \emph{Ann. Math.} (2) \textbf{179} (2014),
  611--658.
\bibitem{Huppert} B.~Huppert, \emph{Endliche Gruppen. I.}  Die
  Grundlehren der Mathematischen Wissenschaften, Band 134
  Springer-Verlag, Berlin-New York 1967.
\bibitem{Isaacs} I.~M.~Isaacs, \emph{Character theory of finite groups},
  Dover Publications, Inc., New York, 1994.
\bibitem{Kleidman-Liebeck} P.~Kleidman and M.~W.~Liebeck, \emph{The
    Subgroup Structure of the Finite Classical Groups}, London
  Math. Soc. Lecture Note Series, Vol. 129, Cambridge University
  Press, 1990.
\bibitem{Liebeck-Shalev} M.~W.~Liebeck and A.~Shalev, Diameters of finite
  simple groups: sharp bounds and applications.  \emph{Ann. Math.} (2)
  \textbf{154} (2001), 383--406.
\bibitem{Lub} A.~Lubotzky. \emph{Discrete groups, expanding graphs and
  invariant measures.}  Modern Birkh\"auser Classics. Birkh\"auser
  Verlag, Basel, 2010.
\bibitem{neumann-erdos} B.~H.~Neumann, A problem of Paul Erd\H{o}s on
  groups, \emph{J. Aust. Math. Soc. Ser. A} \textbf{21} (1976), 467--472.
\bibitem{Pyber-Szabo} L.~Pyber and E.~Szab\'o, Growth in finite simple
  groups of Lie type, \emph{J. Amer. Math. Soc.} \textbf{29} (2016), 95--146.
\bibitem{Wagner} A.~Wagner, Groups generated by elations,
  \emph{Abh.~Math.~Sem.~Univ.~Hamburg} \textbf{41} (1974), 190--205.
\bibitem{Wilson} R.~A.~Wilson, \emph{The finite simple groups.}
  Graduate Texts in Mathematics, 251.  Springer-Verlag London, Ltd.,
  London, 2009.
\end{thebibliography}
\end{document}